\documentclass[12pt,a4paper]{amsart} 
\setlength{\textwidth}{\paperwidth}
\addtolength{\textwidth}{-2in}
\calclayout
\usepackage{amssymb} 
\usepackage{mathtools}
\mathtoolsset{showonlyrefs}
\usepackage{mathrsfs} 
\usepackage{environ}
\usepackage{enumitem}
\usepackage{hyperref} 
\usepackage{amsrefs}
\theoremstyle{plain}
\newtheorem{theorem}{Theorem}[section]
\newtheorem{lemma}[theorem]{Lemma}
\newtheorem{proposition}[theorem]{Proposition}

\theoremstyle{remark}
\newtheorem{example}[theorem]{Example} 
\newtheorem{remark}[theorem]{Remark}
\newtheorem*{remark*}{Remark}

\DeclareMathOperator\supp{supp}
\DeclareMathOperator\diam{diam}
\DeclareMathOperator{\dist}{dist}
\numberwithin{equation}{section}
\allowdisplaybreaks
\newcounter{statement}
\newenvironment{statement}{\setcounter{statement}{\value{equation}} \list{(\theequation)}{\usecounter{equation}} \setcounter{equation}{\value{statement}} \item }{\endlist}
\newcounter{constant}
\newcommand{\C}{\refstepcounter{constant}\ensuremath{C_{\theconstant}}}
\newcommand{\Cr}[1]{\ensuremath{C_{\ref{#1}}}}
\newcommand{\R}{\mathbb{R}}
\newcommand{\N}{\mathbb{N}}
\newcommand{\ve}{\varepsilon}
\newcommand{\ex}{\mathbb E}


\newenvironment{note}{\color{gray}}{\ignorespacesafterend}
\RenewEnviron{note}{}

\begin{document}
\title[Coalescing Brownian motions]{On the coming down from infinity of coalescing Brownian motions}

\author[C. Barnes]{Clayton Barnes}
\address[C. Barnes]{The Faculty of Industrial Engineering and Management \\ Technion --- Israel Institute of Technology \\ Haifa 3200003\\ Israel}
\email{cbarnes@campus.technion.ac.il}

\author[L. Mytnik]{Leonid Mytnik}
\address[L. Mytnik]{The Faculty of Industrial Engineering and Management \\ Technion --- Israel Institute of Technology \\ Haifa 3200003\\ Israel}
\email{leonid@ie.technion.ac.il}

\author[Z. Sun]{Zhenyao Sun}
\address[Z. Sun]{The Faculty of Industrial Engineering and Management \\ Technion --- Israel Institute of Technology \\ Haifa 3200003\\ Israel}
\email{zhenyao.sun@gmail.com}

\subjclass[2020]{60K35, 60H15, 60F25}
\keywords{Coalescing Brownian motions, Coming down from infinity, Nonlinear PDE, SPDE, Minkowski dimension, Shiga's duality}
\begin{abstract}
	Consider a system of Brownian particles on the real line where each pair of particles coalesces at a certain rate according to their intersection local time. 
	Assume that there are infinitely many initial particles in the system. 
	We give a necessary and sufficient condition for the number of particles to come down from infinity. 
	We also identify the rate of this coming down from  infinity for different initial configurations. 
\end{abstract}
\maketitle

\section{Introduction} \label{:}
\subsection{Motivation and Background}
 Coming down from infinity is a property observed for certain stochastic population dynamics models, where the population begins with infinitely many members, and where members undergo competitive interactions that inevitably bring the entire population to a finite size in positive time. 
		Kingman's coalescent, perhaps one of the simplest such examples, is a continuous time Markov process taking 
	values in the set of partitions of $\mathbb N$. 
	If one thinks of blocks in a partition of $\mathbb N$ as particles, then Kingman's coalescent is a particle system where each pair of particles coalesce into one particle with unit rate, independently of other pairs. 
	If initially there are infinitely many particles, the coming down from infinity property says that almost surely, after any positive amount of time, there are only finitely many particles in the system. In 1999, Aldous showed that Kingman's coalescent comes down from infinity \cite{MR1673235}. 
	There are two goals when addressing the coming down from infinity for a given stochastic process. The first is to show whether, and under what conditions, the coming down from infinity property holds. The second, and perhaps more central, goal is then to find the rate with which the number of particles approaches infinity as time goes to zero. 
	
    Let $N_t$ be the number of particles at time $t> 0$ in Kingman's coalescent with infinitely many initial particles. 
	It is known  in \cite{MR1673235}  that $N_t$ behaves like $v(t):=2/t$ when $t\downarrow 0$.
	More precisely, it is shown in \cite{MR2599198} that,  $N_t/v(t)$ converges to $1$ as $t\downarrow 0$, almost surely, and in $L^p$ for any $p\geq 1$.
	Here the rate  function $v(t)$ arises as the solution to the nonlinear ordinary differential equation (ODE) with the singular initial condition:
\begin{equation}\label{:.0001}
\begin{cases}\displaystyle 
	\frac{\mathrm d}{\mathrm dt}v(t)
	=-\frac{1}{2}v(t)^2, 
	\quad t>0,
\\ \displaystyle 
	v(0)
	=\infty.
\end{cases}
\end{equation}
	For more references on coalescent theory, we refer our readers to \cite{MR2574323} and the references therein.

	The phenomenon of coming down from infinity is also considered in \cite{MR2892958} and \cite{MR2223040} for spatial Kingman's coalescent, extending the notion of Kingman's coalescent to the discrete spatial setting.
	In this model, each particle undergoes continuous-time simple random walk on   a connected graph with uniformly bounded degree,  and each pair of particles located at the same vertex coalesce into one particle with unit rate.
	It is proved in \cite{MR2892958} and \cite{MR2223040} that the spatial Kingman's coalescent comes down from infinity if and only if the underlying graph is finite.
	
	 In 1988, Shiga \cite{MR948717} proposed a model which is naturally considered an analogy of Kingman's coalescent in the continuum spatial setting. 
		In this particle system, particles move as independent Brownian motions on $\mathbb R$, and each pair of the particles coalesce into one particle with rate $1/2$ according to their intersection local time.
	  We  will refer to this model as \emph{(slowly) coalescing Brownian motions}. 
	 Coalescing Brownian motions  has drawn much attention due to its connection to the stochastic heat equation with Wright-Fisher white noise \cites{MR4235476, MR2014157, MR2793860, barnes2021effect, MR3968719, MR4278798, MR3582808, MR1813840, blath2022stochastic, MR3846839, MR1415234, MR2162813,  MR948717}.

	It is perhaps natural to ask the following question:
\begin{statement}
\label{:.001}
\emph{Does coalescing Brownian motions come down from infinity?  If it does, what is the rate of this coming down from infinity?}
\end{statement}
	Hobson and Tribe \cite{MR2162813} consider this question for coalescing Brownian motions on the unit circle $\mathbb S_1$, with initial particles sampled according to a Poisson point measure with intensity $n$ times the uniform measure of $\mathbb S_1$.
	Denote by $\hat N^{(n)}_t$ the total number of particles in the system at time $t\geq 0$.
	They proved that, as $n\uparrow \infty$, $\hat N^{(n)}_t$ has a finite weak limit $\hat N_t$ for every strictly positive $t$.
	They also showed that $\hat N_t/v(t)$ converges to $1$ in probability as $t\downarrow 0$ with the   rate  function $v(t)= 2/t$.
	However, their proofs rely on the compactness of the circle $\mathbb S_1$ that does not extend to  coalescing Brownian motions on the real line $\mathbb R$. 
	In this paper, we will give a  more satisfactory answer to \eqref{:.001}.

\subsection{Definition and Main Results}
	Let $\mathcal N$ be the space of locally finite atomic measures on $\mathbb R$ equipped with the vague topology.
	In this paper, the coalescing Brownian motions (with possibly infinite many initial particles) will be defined as $\mathcal N$-valued c\`adl\`ag Markov processes on $(0,\infty)$ whose entrance laws and transition probabilities will be specified later.
	To motivate that formal definition, however, we want to first construct an essentially equivalent particle system which describes the trajectory of each particles. 
	This construction is due to Tribe \cite{MR1339735}*{Section 2}, and the idea is that the particles are labeled by integers, and whenever a pair of particles coalesces, the particle with a larger label will be killed by the particle with a smaller label. 
 	
 	Let $I_0$ be the collection of labels of the initial particles: if there are finitely many initial particles, then $I_0=\{i: 1\leq i \leq n\}$ for some $n\in \mathbb N \cup \{0\}$; otherwise if there are infinitely many initial particles, then $I_0 = \mathbb N$.
		Denote by $x_i\in \mathbb R$ the location of the initial particle labeled by $i$. 
	If $I_0 = \varnothing$, then $(x_i)_{i\in I_0}$ is the empty list $\varnothing$, and nothing needs to be constructed because there is no initial particles.
	Otherwise if $I_0 \neq \varnothing$, the construction is formulated as follows.
\begin{statement}
\label{:.01}
	Let $\{(B^{(i)}_t)_{t\geq 0}:i\in I_0\}$ be a list of independent Brownian motions on $\mathbb R$ defined on a complete probability space $(\Omega, \mathscr F, \mathbb P)$ such that $B_0^{(i)} = x_i$ for each $i\in I_0$.
\end{statement}
\begin{statement}
	Let $\{\mathbf e^{(i)}: i\in I_0 \}$ be a family of i.i.d.\ exponential random variables with mean $2$, defined on $(\Omega, \mathscr F, \mathbb P)$, independent of the Brownian motions in \eqref{:.01}.
\end{statement}
\begin{statement}
	For each $i,j \in I_0$ with $i>j$, denote by $(L^{(i,j)}_t)_{t\geq 0}$ the local time at zero of the process $(B^{(i)}_t-B^{(j)}_t)_{t\geq 0}$.
\end{statement}
\begin{statement}
	Define $\zeta_1 := \infty$, and inductively for each $i\in I_0$ with $i>1$,
	\begin{equation}
		\zeta_{i} 
		:= \inf\left\{t\geq 0: \sum_{j=1}^{i-1} L^{(i,j)}_{t\wedge \zeta_j} \geq \mathbf e^{(i)}\right\}.
	\end{equation}
\end{statement}
	We call the stopping time $\zeta_i$ the lifetime of the $i$th particle, and we define 
\begin{equation}
X^{(i)}_t := \begin{cases}
	B^{(i)}_t, \quad &\text{if~} t\in [0,\zeta_i),
	\\ \dagger, \quad &\text{otherwise},
\end{cases}
\end{equation}
for every $i\in I_0$.
Here $\dagger$ is called the cemetery state and is not contained in $\mathbb R$.
	Now we say the set of random variables $\mathbf X:=\{X^{(i)}_t: t\geq 0, i \in I_0\}$ is a \emph{coalescing Brownian particle system} with initial configuration $(x_i)_{i\in I_0}$.
	The space of all the possible initial configurations is denoted by \[\mathcal X:= \{\varnothing\} \cup \bigcup_{n\in \mathbb N\cup\{\infty\}} \mathbb R^n.\]

	After we have constructed the trajectory of each particles, it is natural to consider the counting measure formed by the locations of the particles at a fixed time.
	Denote by $I_t:= \{i\in I_0: t\in [0,\zeta_i)\}$ the set of the labels of the particles alive at time $t\geq 0$. 
	For any open set $A\subset \mathbb R$, let $\mathscr B(A)$ be  Borel $\sigma$-algebra on $A$. 
	For every $U \in \mathscr B(\mathbb R)$ and $t\geq 0$, define a $\mathbb N\cup \{0,\infty\}$-valued random variable
\begin{equation} 
	 \mathtoolsset{showonlyrefs=false}
	\label{eq:ZtU}
		Z_t(U) := \sum_{i \in I_t} \mathbf{1}_U(X_t^{(i)})
\end{equation}
	which is the number of living particles contained in $U$ at time $t.$ 
	Let us also define a filtration $(\mathscr F_t)_{t\geq 0}$ so that, 
\begin{equation} \label{:filtration}
	\mathscr F_t := \sigma(Z_s(U): s\leq t, U \in \mathscr B(\mathbb R)),  \quad t\geq 0.
\end{equation}

	At this point, however, it is not clear whether $(Z_t)_{t>0}$ is a $\mathcal N$-valued process.
	Before we show that this is indeed the case, let us define 
\begin{equation}
\mathcal T_{\mathrm a} := \{(\Lambda, \mu): \text{$\Lambda$ is a closed subset of $\mathbb R$, $\mu$ is an atomic Radon measure on $\mathbb R\setminus \Lambda$}\},
\end{equation}
and  introduce a map $\Psi$ from $\mathcal X$ to $\mathcal T_{\mathrm a}$ so that $(\Lambda, \mu)= \Psi((x_i)_{i\in I_0})$ provided
	\begin{align} 
		\label{:lambda}
		&\Lambda 
		= \left\{y\in \mathbb R: \sum_{i\in I_0} \mathbf 1_{(y-r,y+r)}(x_i) = \infty, \forall r>0 \right\}, 
		\\ \label{:mu}
		&\mu(B) 
		= \sum_{i\in I_0} \mathbf 1_B(x_i), \quad \forall B \in \mathscr B(\Lambda^\mathrm c).
	\end{align}
	We call $\Psi((x_i)_{i\in I_0})$ the \emph{initial trace} of the coalescing Brownian particle system with initial configuration $(x_i)_{i\in I_0}$. 
	It is clear that $\mathcal T_{\mathrm a}$ is the collection of all the possible initial traces of the  coalescing Brownian particle systems, in the sense that $\Psi: \mathcal X \to \mathcal T_{\mathrm a}$ is a surjection.
	In what follows, we denote by $\supp(\Lambda,\mu) := \Lambda \cup \supp(\mu)$ the support of a given $(\Lambda,\mu) \in \mathcal T_{\mathrm a}$.
	Recall that a set $S \subset \R$ is said to be bounded if $\diam(S) := \inf\{K : |x - y| < K, \forall x,y \in S\}<\infty$.
We are now ready to state our first result.

\begin{theorem}\label{:I}
	Let $(\Lambda,\mu) = \Psi((x_i)_{i\in I_0})$ and let $U\subset \mathbb R$ be an arbitrary open interval.
\begin{enumerate}[label = \emph{(\roman*)}]
\item\label{:II} 
	If $U \cap \supp(\Lambda, \mu)$ is unbounded, then $\mathbb P( Z_t(U) = \infty, \forall t\geq 0 ) = 1$.
\item\label{:IF} 
		If $U \cap \supp(\Lambda,\mu)$ is bounded, then $\mathbb P(Z_t(U) < \infty, \forall t>0) = 1$. 
\end{enumerate}
\end{theorem}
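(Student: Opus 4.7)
The two parts need distinct techniques.

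\textbf{Part (i).} Because $U$ is an interval and $U\cap\supp(\Lambda,\mu)$ is unbounded, $U$ is itself unbounded; without loss of generality $U=(a,\infty)$ and $U\cap\supp(\Lambda,\mu)$ accumulates at $+\infty$. My first step would be to extract an infinite subsequence of labels $(i_n)\subset I_0$ with $x_{i_n}\in U$, $x_{i_n}\to\infty$, and pairwise gaps at least $3$, using that each point of $\Lambda$ is a limit of infinitely many $x_j$ and each atom of $\mu$ is itself some $x_j$. For a horizon $T>0$, the events $A_n:=\{\sup_{s\in[0,T]}|B^{(i_n)}_s-x_{i_n}|<1\}$ are, by \eqref{:.01}, independent with common positive probability depending only on $T$, so $\P(A_n\text{ i.o.})=1$ by Borel--Cantelli. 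On $A_n$ the particle $i_n$ is confined to $J_n:=(x_{i_n}-1,x_{i_n}+1)\subset U$; because the $J_n$ are pairwise disjoint, the labels $\{i_n\}$ cannot coalesce among themselves. It remains to rule out coalescence of $i_n$ with some smaller-labelled $B^{(j)}$, $j<i_n$, $j\notin\{i_m\}$: arranging $(x_{i_n})$ to lie eventually off $\Lambda$ (possible by the Radon property of $\mu$ on $\Lambda^{\mathrm c}$) reduces the nearby labels to finitely many per $n$, and a Gaussian tail bound controls those $B^{(j)}$ starting far from $J_n$. A second application of Borel--Cantelli then produces infinitely many labels $i_n$ surviving in $U$ at time $T$; taking $T$ through the integers yields $Z_t(U)=\infty$ for every $t\geq 0$ almost surely.

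\textbf{Part (ii).} The decisive input here is a one-point density bound of the form $\ex[Z_t(V)]\leq C|V|/\sqrt{t}$ for any bounded Borel $V\subset\R$ and $t>0$, uniform in the initial configuration; this is the quantitative form of coming down from infinity and, I expect, the principal technical theorem developed elsewhere in the paper (likely via Shiga's duality with the Wright--Fisher SPDE). Granted it, choose a bounded open interval $V$ with $U\cap\supp(\Lambda,\mu)\subset V\subset U$, and decompose $Z_t(U)=Z_t(V)+Z_t(U\setminus V)$. The density bound gives $\ex[Z_t(V)]<\infty$ directly. For $Z_t(U\setminus V)$, note that $U\setminus V$ contains no initial particles and sits at positive distance from $\supp(\Lambda,\mu)\cap U$; using the monotone coupling $Z_t(W)\leq\#\{i:B^{(i)}_t\in W\}$, partitioning the initial configuration into unit-length clusters along $\R$, and reducing each cluster's effective mass to order $1/\sqrt{t}$ by applying the density bound inside each cluster, one obtains a summable Gaussian tail estimate yielding $\ex[Z_t(U\setminus V)]<\infty$.

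\textbf{Main obstacles and wrap-up.} In part (i), the delicate step is controlling coalescence of each $i_n$ with distant smaller-labelled particles, whose initial positions are entirely unconstrained. In part (ii), essentially all the difficulty is concentrated in the density bound itself, which is equivalent to the coming-down-from-infinity phenomenon under study. The final upgrade from ``for each fixed $t$'' to ``for all $t$ simultaneously'' would exploit the monotonicity of the alive label set $I_t$ in $t$ together with right-continuity of the c\`adl\`ag paths.
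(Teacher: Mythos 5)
Your overall plan is in the right spirit, but there are two genuine gaps, one in each part.

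For part~(i), the difficulty you correctly flag---that the sub\-sequence particle $i_n$ might be killed by some smaller-labelled particle $j\notin\{i_m\}$ that diffuses into $J_n$---is not adequately resolved by the sketch you give. There can be infinitely many such labels $j$, and the Gaussian tail bound for a single $B^{(j)}$ reaching $J_n$ does not obviously sum; moreover even a particle that only briefly enters $J_n$ contributes to the local time clock of $i_n$, so one cannot simply condition on none of them entering. The paper sidesteps this entirely via a monotone-coupling lemma (Lemma~\ref{:IIS}): take the sublist $(\tilde x_i)$ and run a \emph{fresh} coalescing particle system $\tilde{\mathbf X}$ with only those initial particles; the lemma asserts that $\tilde Z_t(U)$ is stochastically dominated by $Z_t(U)$. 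In the sublist system there simply are no other particles to worry about, and the paper then dominates again by a system where each particle is killed when it first travels distance~$1$ (so particles never meet), reducing to a textbook second Borel--Cantelli argument. You should use this coupling; your confinement argument then works without the problematic step.

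For part~(ii), two issues. First, the uniform-in-initial-configuration density bound is $\ex[Z_t(V)]\le \tfrac{2}{t}\lambda(V)$ (coming from $v^{(\mathbb R,\mathbf 0)}_{t,\cdot}\equiv 2/t$, see \eqref{:II.523}), not $C\lambda(V)/\sqrt t$; the $1/\sqrt t$ rate holds only for point-like singular traces (Proposition~\ref{:B}(1)). This is harmless for finiteness, but worth correcting. Second, and more seriously, your proposed upgrade from ``finite for each fixed $t$'' to ``finite for all $t$ simultaneously'' via ``monotonicity of $I_t$ together with right-continuity'' does not go through: while $I_t$ is decreasing, $Z_t(U)$ is not monotone in $t$ for a bounded interval $U$, because particles can enter $U$ at positive times, and right-continuity alone cannot prevent an accumulation of such entrances. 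The paper controls this by proving (Proposition~\ref{prop:finitecrossing}) that the number of particles whose \emph{space-time trajectory} ever meets $(\delta,T)\times U$ has finite expectation; this bound, not pointwise-in-$t$ moments, is what yields the ``for all $t>0$'' conclusion. The mechanism is the one you gesture at---uniform moment bounds on unit blocks plus Gaussian tails on how far a particle can travel in time $T-\delta$---so your intuition is correct, but the missing ingredient is formulating and proving the space-time crossing estimate rather than appealing to monotonicity.
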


	If $U$ is bounded, then Theorem \ref{:I}  \emph{\ref{:IF}} shows that $Z_t(U)$ is finite for all time $t>0$, almost surely.
	This implies that $(Z_t)_{t>0}$ is a $\mathcal N$-valued process. 
We will verify that $(Z_t)_{t>0}$ is a c\`adl\`ag Markov process and characterize its law through its duality with Wright-Fisher stochastic partial differential equation (Wright-Fisher SPDE) 
		\begin{equation} \label{:II.3}
			\partial_t u_{t,x} 
			= \frac{1}{2}\partial_x^2 u_{t,x} +   \sqrt{u_{t,x}(1-u_{t,x})} \dot W_{t,x}, 
			\quad t>0, x\in \mathbb R.
		\end{equation}	
	Let us briefly review some results about \eqref{:II.3} here.
	Denote by $\mathcal C_{[0,1]}$ the collection of $[0,1]$-valued continuous functions on $\mathbb R$, equipped with the topology of uniform convergence on compact sets.
	\begin{note}
		$\mathcal C_{\mathrm{tem}}$ is Polish \cite{MR2094150}.
	\end{note}
	Let $f$ be an arbitrary $[0,1]$-valued Borel measurable function on $\mathbb R$.
	According to \cite{MR1271224}, there exists a filtered probability space $(\mathbf \Omega, \mathscr G, (\mathscr G_t), \mathbf P_f)$, and on this space, an adapted  $\mathcal C_{[0,1]}$-valued continuous process   $(u_{t,\cdot})_{t>0}$  and a space-time white noise $W$, satisfying the mild form of the Wright-Fisher SPDE \eqref{:II.3} with the initial condition $u_0 = f$.
	Namely,  for every $(t,x)\in(0,\infty)\times \mathbb R$,
	\begin{equation} \label{:II.35}
		u_{t,x}
		= \int G_{t,x-y}f_{y}\mathrm dy + \iint_0^t G_{t-s,x-y} \sqrt{u_{s,y}(1-u_{s,y})} W(\mathrm ds\mathrm dy) \quad \text{a.s.} 
	\end{equation}
	Here, $G$ is the heat kernel given by $G_{t,x}:= e^{-x^2/(2t)}/\sqrt{2\pi t}$ for every $(t,x)\in (0,\infty)\times \mathbb R$, and the second term on the right hand side of \eqref{:II.35} is given by Walsh's stochastic integral  driven by a space-time white noise \cite{MR876085}. 
	\begin{note}
		\cite{MR863723} is also a good reference on Walsh's stochastic integral. 
	\end{note}
	The law of the $\mathcal C_{[0,1]}$-valued continuous process $(u_{t,\cdot})_{t> 0}$ is uniquely determined by its initial value $f$ \cite{MR948717}*{Theorem 5.1 (2)}. 

	Let us now give the formal definition of the coalescing Brownian motions.
	For any $(\Lambda,\mu) \in \mathcal T_{\mathrm a}$, we say that a process $(Y_t)_{t>0}$, living in a filtered probability space with filtration $(\mathscr F^Y_t)_{t>0}$ and probability $\mathbb P_{(\Lambda,\mu)}$, is a \emph{coalescing Brownian motions process}    with initial trace $(\Lambda,\mu)$, if the following statements hold.
\begin{statement}
	$(Y_t)_{t>0}$ is an $\mathcal N$-valued c\`adl\`ag Markov process.
\end{statement}
\begin{statement} \label{eq:EntranceLaw}
	For any $t>0$ and $[0,1]$-valued Borel function $f$ on $\mathbb R$,
	\begin{equation}
		\mathbb E_{(\Lambda,\mu)}[ \exp\left\{\langle \log (1-f), Y_t \rangle\right\}] = \mathbf E_f[\mathbf 1_{\{u_{t,x} = 1,\forall x\in \Lambda\}} \exp\left\{\langle \log (1-u_t),\mu\rangle \right\}].
	\end{equation}
\end{statement}
\begin{statement} \label{eq:TransitionProbability}
	For any $t>s>0$ and $[0,1]$-valued Borel function $f$ on $\mathbb R$,
	\begin{equation}
		\mathbb E_{(\Lambda,\mu)}\left[\exp\{\langle \log(1-f), Y_t\rangle \}\middle| \mathscr F^{Y}_s \right] 
		= \Theta^f_{t-s} (Y_s)
	\end{equation}
	where
	$
	\Theta^f_{t-s}(\kappa) : = 	\mathbf E_f[ \exp\{\langle \log(1-u_{t-s}), \kappa \rangle \}]$ for $\kappa \in \mathcal N$.
\end{statement}
Here, we denote by $\mathbb E_{(\Lambda,\mu)}$ and $\mathbf E_{f}$ the expectations for the probabilities $\mathbb P_{(A,\nu)}$ and $\mathbf P_f$ respectively. 
	Notice that \eqref{eq:EntranceLaw} and \eqref{eq:TransitionProbability} characterize the entrance laws and the transition probabilities of the coalescing Brownian motions through the Laplace transform.
	In particular, if such process $(Y_t)_{t> 0}$ exists, then its law is uniquely determined by its initial trace $(\Lambda,\mu)$.
	
	Our next result is to show the existence of such processes.
	
\begin{theorem} \label{thm:Markov}
	$~$
\begin{itemize}
\item[\emph{(i)}]
		Let $(x_i)_{i\in I_0} \in \mathcal X$, $(\Lambda,\mu) = \Psi((x_i)_{i\in I_0})$ and let
		 $\{Z_t(U):t\geq 0, U \in \mathscr B(\mathbb R)\}$ be random variables  in a filtered probability space $(\Omega, \mathscr F, \mathscr F_t, \mathbb P)$ constructed through \eqref{:.01}--\eqref{:filtration}. 
		Then the process $(Z_t)_{t>0}$  is a coalescing Brownian motions process with initial trace $(\Lambda, \mu)$.
\item[\emph{(ii)}] For any $(\Lambda,\mu)\in \mathcal T_a$, there exists a coalescing Brownian motions process with initial trace $(\Lambda,\mu)$.
\end{itemize}
\end{theorem}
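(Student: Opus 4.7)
The plan is to reduce both parts to Shiga's moment duality \cite{MR948717}*{Theorem 5.1} for \emph{finitely many} coalescing Brownian motions and then pass to the limit using a consistent truncation of the labeled construction in \eqref{:.01} and the statements that follow. The key observation is that each lifetime $\zeta_i$ depends only on $\{B^{(j)}, \mathbf e^{(j)} : j \leq i\}$; therefore, if $Z_t^{(n)}$ denotes the counting measure of the subsystem using only labels $\{1,\ldots,n\}$, the lifetimes of its particles coincide with those of the first $n$ particles of the full system, so that $Z_t^{(n)}(U) \uparrow Z_t(U)$ as $n \uparrow \infty$ for every $U \in \mathscr B(\mathbb R)$ and every $t \geq 0$.

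When $|I_0| < \infty$ (so that $\Lambda = \varnothing$ and $\mu = \sum_{i\in I_0}\delta_{x_i}$), the process $(Z_t)_{t\geq 0}$ is a finite pure-jump process adapted to a Brownian filtration, hence trivially c\`adl\`ag and strong Markov, and Shiga's duality already delivers \eqref{eq:EntranceLaw} (the indicator being vacuously $1$ since $\Lambda$ is empty). The transition formula \eqref{eq:TransitionProbability} in this case follows by applying Shiga's duality between times $s$ and $t$ with the a.s.\ finite random measure $Z_s$ as the initial configuration, and invoking the Markov property at $s$.

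For general $I_0$, I would apply this finite-particle identity to $Z_t^{(n)}$ with $\mu_n := \sum_{i\leq n}\delta_{x_i}$ and let $n\to\infty$ in
\[
\mathbb E\bigl[\exp\{\langle \log(1-f),\, Z_t^{(n)}\rangle\}\bigr]
 = \mathbf E_f\Bigl[\prod_{i=1}^n (1-u_{t,x_i})\Bigr].
\]
Since $\log(1-f) \leq 0$, the left-hand integrand is bounded and decreasing in $n$, so dominated convergence together with the truncation consistency yields the left-hand side of \eqref{eq:EntranceLaw}. On the right, the product $\prod_{i=1}^n(1-u_{t,x_i})$ also decreases $\mathbf P_f$-a.s.\ to some $\Pi_\infty \in [0,1]$; by the continuity of $u_t \in \mathcal C_{[0,1]}$ and the characterization \eqref{:lambda} of $\Lambda$ as the accumulation set of $(x_i)$, a clean dichotomy emerges: on the event where $u_{t,\cdot} \equiv 1$ on $\Lambda$, the factors corresponding to initial points clustering toward $\Lambda$ tend to $1$, while the atoms in $\Lambda^{\mathrm c}$ reproduce $\exp\{\langle \log(1-u_t), \mu\rangle\}$ via \eqref{:mu}; on its complement, infinitely many factors are uniformly bounded away from $1$ and $\Pi_\infty = 0$. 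Taking $\mathbf E_f$ then yields \eqref{eq:EntranceLaw}. The same passage performed conditional on $\mathscr F_s^Y$ starting from $Z_s^{(n)}$ at time $s$ delivers \eqref{eq:TransitionProbability}, which simultaneously encodes the Markov property and identifies the transition kernel $\Theta^f_{t-s}$. The vague c\`adl\`ag property is handled test-function-by-test-function: for any $\phi\in C_c(\mathbb R)$, Theorem~\ref{:I}\ref{:IF} guarantees that $Z_\cdot(\supp\phi)$ is a.s.\ finite on $[\delta, T]$ for each $\delta > 0$, and an approximation by the c\`adl\`ag $Z_\cdot^{(n)}$ together with the continuity of surviving particle trajectories transfers the c\`adl\`ag property to the limit. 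Part~(ii) then follows from Part~(i) by the surjectivity of $\Psi \colon \mathcal X \to \mathcal T_{\mathrm a}$.

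The most delicate step will be the identification of $\Pi_\infty$: one must exploit continuity of $u_t$ to separate the contribution of initial atoms clustering toward the (possibly uncountable, closed) accumulation set $\Lambda$ from that of the locally finite atoms of $\mu$ in $\Lambda^{\mathrm c}$, and match the resulting dichotomy precisely with the indicator $\mathbf 1_{\{u_{t,x}=1,\forall x\in\Lambda\}}$ appearing in \eqref{eq:EntranceLaw}. A secondary technical point is verifying the c\`adl\`ag property in the vague topology at times near which many distant particles could enter a bounded window from outside.
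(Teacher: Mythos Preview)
Your approach is essentially the paper's: truncate to the first $n$ labels, apply Shiga's duality to the finite system, pass to the limit by monotone convergence, identify the infinite product via a dichotomy on $\Lambda$, and repeat conditionally on $\mathscr F_s$ for \eqref{eq:TransitionProbability}; part~(ii) follows from surjectivity of $\Psi$. Two points deserve correction or strengthening.

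\textbf{The dichotomy is stated backwards.} You write that on the event $\{u_{t,\cdot}\equiv 1 \text{ on }\Lambda\}$ the factors corresponding to initial points clustering toward $\Lambda$ ``tend to $1$''. But if $u_{t,x}=1$ at $x\in\Lambda$ and $x_i\to x$, continuity gives $1-u_{t,x_i}\to 0$, so the product vanishes. The correct event is $\{u_{t,x}=0,\ \forall x\in\Lambda\}$: there the clustering factors tend to $1$; on its complement some $x\in\Lambda$ has $u_{t,x}>0$, a neighborhood of $x$ contains infinitely many $x_i$ with $1-u_{t,x_i}$ bounded away from $1$, and $\Pi_\infty=0$. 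The paper records exactly this identity,
\[
\prod_{i=1}^\infty(1-u_{t,x_i})=\mathbf 1_{\{u_{t,x}=0,\ \forall x\in\Lambda\}}\prod_{i:\,x_i\notin\Lambda}(1-u_{t,x_i}),
\]
so the indicator ``$=1$'' appearing in \eqref{eq:EntranceLaw} (and in the last display of the paper's proof) is a typo for ``$=0$''; your argument should target the corrected version.

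\textbf{The c\`adl\`ag step needs more than Theorem~\ref{:I}\ref{:IF}.} Finiteness of $Z_t(\supp\phi)$ for every $t>0$ does not prevent infinitely many distinct particles from visiting $\supp\phi$ over $(\delta,T)$, and a pointwise limit of c\`adl\`ag processes need not be c\`adl\`ag, so approximation by $Z^{(n)}$ alone is insufficient. The paper closes this with Proposition~\ref{prop:finitecrossing}: $\mathbb E[Z((\delta,T)\times(-l,l))]<\infty$, hence almost surely only finitely many labels $i$ have trajectories meeting $(\delta,T)\times(-l,l)$. For $\phi\in\mathcal C_{\mathrm c}((-l,l))$ this makes $\langle\phi,Z_t\rangle=\sum_{i\in I_{(\delta,T)\times(-l,l)}}\phi(X_t^{(i)})$ a \emph{finite} sum of c\`adl\`ag processes on $(\delta,T)$. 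You correctly flag this as the delicate point, but the resolution is the space-time crossing estimate, not the fixed-time bound.
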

\begin{remark}
\label{rem:2311_01}
Since $\Psi: \mathcal X \to \mathcal T_a$ is a surjection, for  any $(\Lambda,\mu)\in  \mathcal T_a $ there exist $I_0$ and $(x_i)_{i\in I_0} \in \mathcal X$ such that  $(\Lambda,\mu) = \Psi((x_i)_{i\in I_0})$. Thus part (ii) of Theorem~\ref{thm:Markov}  is an immediate corollary of part (i) of that theorem. 
\end{remark}

	To present our result on the coming down from infinity of the coalescing Brownian motions, we will discuss briefly the one-dimensional nonlinear partial differential equation
		 \[\partial_t v_{t,x} = \frac{1}{2}\partial_x^2v_{t,x}-\frac{1}{2}v_{t,x}^2, \quad t > 0, x\in \mathbb R.\]
	(See \cite{MR1429263}, and \cite{MR1697494} for a more comprehensive treatment.) 
	Denote by $\mathcal C^{1,2}((0,\infty) \times \mathbb R)$ the collection of functions $(h_{t,x})_{t>0,x\in \mathbb R}$ which is continuously differentiable in $t$ and twice continuously differentiable in $x$.
	For every open $U \subset \mathbb R$, denote by $\mathcal C_\mathrm c(U)$ the collection of continuous function whose support is a compact subset of $U$.
	According to \cite{MR1429263}*{Theorem 4}, for any closed set $A\subset \mathbb R$ and non-negative Radon measure $\nu$ on $A^\mathrm c$, there exists a unique non-negative $v^{(A, \nu)}\in \mathcal C^{1,2}((0,\infty)\times \mathbb R)$  such that 
\begin{equation}\label{:.05}
\begin{cases}\displaystyle
	\partial_t v^{(A,  \nu)}_{t,x} 
	= \frac{1}{2}\partial_x^2 v^{(A,  \nu)}_{t,x} - \frac{1}{2}\big(v^{(A,  \nu)}_{t,x}\big)^2, 
	\quad (t,x)\in (0,\infty)\times \mathbb R;
\\\displaystyle 
	\Big\{y\in \mathbb R: \forall r>0, \lim_{t\downarrow 0}\int_{y-r}^{y+r} v^{(A,  \nu)}_{t,x} \mathrm dx = \infty\Big\} = A;
\\\displaystyle
	\lim_{t\downarrow 0} \int \phi_x v^{(A,\nu)}_{t,x}\mathrm dx
	= \int \phi_x  \nu(\mathrm dx),
	\quad \phi \in \mathcal C_\mathrm c(A^\mathrm c).
\end{cases}
\end{equation}
	Notice that the PDE \eqref{:.05} is a spatial analogy of the ODE \eqref{:.0001}.	
	The pair $(A, \nu)$ is known as the  \emph{initial trace} of the solution $v^{(A, \nu)}$,  see also \cite{MR1697494} for more details. 

	The property of coming down from infinity of the coalescing Brownian motions with initial trace $(\Lambda,\mu)\in \mathcal T_\mathrm a$ is closely related to the solution  $v^{(\Lambda, \mu)}$ of PDE \eqref{:.05}.
		We observe this in our next result.
	\begin{theorem}\label{:E}
		Let $(\Lambda,\mu)\in \mathcal T_{\mathrm a}$ be arbitrary. 
		Suppose that $(Y_t)_{t> 0}$ is a coalescing Brownian motions process  with initial trace $(\Lambda,\mu)$ living in a filtered probability space with probability $\mathbb P_{(\Lambda,\mu)}$.
		Then the following two statements hold for arbitrary open interval $U\subset \mathbb R$. 
		\begin{enumerate}[label = \emph{(\roman*)}]
			\item \label{:StayInfinity}
			If $U \cap \supp(\Lambda, \mu)$ is unbounded, then $\mathbb P_{(\Lambda,\mu)}( Y_t(U) = \infty, \forall t> 0 ) = 1$.
			\item \label{:StayFinite}
				If $U \cap \supp(\Lambda,\mu)$ is bounded, then $\mathbb P_{(\Lambda,\mu)}(Y_t(U) < \infty, \forall t>0) = 1$. 
		\end{enumerate}
		Moreover, in the latter case when $U \cap \supp(\Lambda,\mu)$ is bounded, the following three statements hold. 
		\begin{enumerate}[label = \emph{(\roman*)}]
			\setcounter{enumi}{2}
			\item \label{:EK}
			$\mathbb E_{(\Lambda,\mu)}[Y_t(U)]<\infty$ for every $t>0$.
			\item\label{:EI} 
			If $\overline{U} \cap \Lambda= \varnothing$, then $\limsup_{t\downarrow 0}\mathbb E_{(\Lambda,\mu)}[Y_t(U)] < \infty$. 
			\item\label{:EF} 
			If $\overline{U} \cap \Lambda \neq \varnothing$, then as $t\downarrow 0$, $\mathbb E_{(\Lambda,\mu)}[Y_t(U)]\to \infty$ and
			\begin{align}
				\left(\int_U v^{(\Lambda, \mu)}_{t, x} \mathrm dx\right)^{-1} Y_t(U) 
				\longrightarrow 1 \text{~in~} L^1 \text{~w.r.t.~} \mathbb P_{(\Lambda,\mu)}. 
			\end{align}
		\end{enumerate}
	Here $\overline{U}$ is the closure of $U$.
	\end{theorem}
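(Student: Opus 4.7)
Parts (i) and (ii) reduce immediately to Theorem~\ref{:I}: by Theorem~\ref{thm:Markov}(i), the particle-system counting process $(Z_t)_{t>0}$ is a coalescing Brownian motions process with initial trace $(\Lambda,\mu)$ satisfying the infinite/finite dichotomy of Theorem~\ref{:I}; since the entrance law \eqref{eq:EntranceLaw} and transition probabilities \eqref{eq:TransitionProbability} uniquely determine all finite-dimensional distributions, $(Y_t)_{t>0}$ and $(Z_t)_{t>0}$ agree in law on $(0,\infty)$, and the dichotomy transfers verbatim.

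Parts (iii), (iv) and the divergence claim in (v) all follow once the mean density of $Y_t$ is connected to the PDE solution $v^{(\Lambda,\mu)}$. The plan is to exploit the Wright--Fisher duality \eqref{eq:EntranceLaw}: choosing $f_\lambda := 1-e^{-\lambda\mathbf 1_U}$, dividing by $\lambda$ and sending $\lambda\downarrow 0$ isolates $\mathbb E_{(\Lambda,\mu)}[Y_t(U)]$ on the left, while a careful expansion of the Wright--Fisher side --- first linearizing around the absorbing state $0$, then approximating $(\Lambda,\mu)$ by truncated initial configurations (using the monotonicity of $Y_t(U)$ in the initial data built into the labelled particle construction \eqref{:.01}), and finally invoking the uniqueness statement of \cite{MR1429263} for the initial-trace problem \eqref{:.05} --- delivers a finite upper bound for $\mathbb E_{(\Lambda,\mu)}[Y_t(U)]$ controlled by $\int_U v^{(\Lambda,\mu)}_{t,x}\,\mathrm dx$. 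Parts (iii) and (iv) then follow from the $\mathcal C^{1,2}$-regularity of $v^{(\Lambda,\mu)}$ combined with the third bullet of \eqref{:.05} (via a cutoff $\phi\in\mathcal C_{\mathrm c}(\Lambda^{\mathrm c})$ dominating $\mathbf 1_U$ in the regime $\overline U\cap\Lambda=\varnothing$), and the divergence in (v) follows from the second bullet of \eqref{:.05} applied at a point of $\overline U\cap\Lambda$ combined with continuity of $v^{(\Lambda,\mu)}$ on $(0,\infty)\times\mathbb R$ to transfer the blow-up into the open set $U$.

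The principal obstacle is the $L^1$ convergence in (v). My plan is to strengthen the first-moment analysis to second-moment concentration,
\begin{equation}
\operatorname{Var}_{(\Lambda,\mu)}\bigl(Y_t(U)\bigr) \;=\; o\!\left(\left(\int_U v^{(\Lambda,\mu)}_{t,x}\,\mathrm dx\right)^{\!2}\right), \qquad t\downarrow 0,
\end{equation}
from which $L^2$, and hence $L^1$, convergence of $Y_t(U)/\int_U v^{(\Lambda,\mu)}_{t,x}\,\mathrm dx$ to $1$ follows. The second moment is extracted from \eqref{eq:EntranceLaw} by substituting $f=1-(1-\lambda_1\mathbf 1_U)(1-\lambda_2\mathbf 1_U)$ and computing $\partial_{\lambda_1}\partial_{\lambda_2}$ at $\lambda_1=\lambda_2=0$, which yields an expression for $\mathbb E[Y_t(U)(Y_t(U)-1)]$ via a two-point Wright--Fisher correlator whose asymptotic factorization as $t\downarrow 0$ is the crux. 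The technical heart is establishing this factorization uniformly in $(\Lambda,\mu)$; the plan is to sandwich the system between denser, spatially homogeneous Poisson reference configurations (extending the circular analysis of \cite{MR2162813} to $\mathbb R$, where the rate $v(t)=2/t$ is explicit) and transfer the concentration back via the monotonicity of the particle system in its initial trace.
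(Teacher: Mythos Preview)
Your handling of parts (i)--(iv) and the divergence claim in (v) is essentially the paper's approach: reduce to the labelled particle system $(Z_t)_{t>0}$ via Theorem~\ref{thm:Markov}, then feed Shiga's duality into the PDE \eqref{:.05}. The paper packages the first-moment bound as Proposition~\ref{:IF_fixed_t} (note the upper bound carries two extra terms beyond $\int_U v^{(\Lambda,\mu)}$, controlled separately in Lemmas~\ref{:IFP_2} and~\ref{:IFP_3}), and your cutoff argument for (iv) and blow-up argument for (v) are Lemma~\ref{:EO}.

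Where you diverge from the paper is the $L^1$ convergence in (v), and here your plan is both harder and shakier than necessary. The paper never touches second moments. Instead it shows directly that the Laplace transform
\[
\mathbb E\Bigl[\exp\Bigl(-\vartheta\Bigl(\textstyle\int_U v^{(\Lambda,\mu)}_{t,x}\,\mathrm dx\Bigr)^{-1} Z_t(U)\Bigr)\Bigr]\;\longrightarrow\; e^{-\vartheta},\qquad t\downarrow 0,
\]
for every $\vartheta>0$, by choosing $\varepsilon=\varepsilon(t)$ in the already-established sandwich Lemmas~\ref{:IFP_1} and~\ref{:IFP_25} so that $\varepsilon(t)\int_U v^{(\Lambda,\mu)}_{t}\to\vartheta$. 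This gives convergence in probability to the constant $1$; combined with the first-moment convergence $\mathbb E[Z_t(U)]/\int_U v^{(\Lambda,\mu)}_{t}\to 1$ (Lemma~\ref{:EE}), a standard uniform-integrability fact (\cite{MR4226142}*{Theorem 5.12}: convergence in probability plus convergence of $L^1$ norms implies $L^1$ convergence) finishes the job.

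Your variance route has two concrete gaps. First, stochastic domination from the labelled construction gives you one-sided control of the \emph{distribution}, not of the variance; sandwiching $Y_t(U)$ between two Poisson systems does not sandwich $\operatorname{Var}(Y_t(U))$ between their variances. Second, you propose to import the concentration from \cite{MR2162813}, but the paper's introduction explicitly flags that those arguments rely on the compactness of $\mathbb S_1$ and do not transfer to $\mathbb R$; your proposal offers no mechanism to overcome this. The two-point correlator factorization you need is genuinely nontrivial and is nowhere developed. The Laplace-transform route sidesteps all of this by recycling exactly the estimates already proved for parts (iii)--(iv).
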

	By taking $U = \mathbb R$, the above theorem says that the total population in a coalescing Brownian motions is finite for all positive time, provided its initial trace $(\Lambda,\mu)$ is compactly supported.
	Conversely, if the initial trace is not compactly supported, then the total population remains infinite.
	This answers the first part of the question \eqref{:.001}.
	Furthermore, in the case when the initial trace is compactly supported and $\Lambda \neq \varnothing$, the total population $(Y_t(\mathbb R))_{t>0}$ comes down from infinity, and its small 
	times behavior is described ``approximately'' by $\int_{\mathbb R} v_{t, x}^{(\Lambda, \mu)}\mathrm d x.$
	This answers the second part of the question \eqref{:.001}. 
	Let us also mention that even if the initial trace is not compactly supported, Theorem~\ref{:E}(ii)  implies that  the population is \emph{locally} bounded at all positive times.

\subsection{Examples of rate functions}
		 Theorem \ref{:E} says that the total population comes down from infinity if and only if the initial trace $(\Lambda, \mu)$ is compactly supported and $\Lambda \neq \varnothing$. 
		In this case, the rate in which the total population comes down from infinity is given by the rate function $t\mapsto \|v_{t,\cdot}^{(\Lambda, \mu)}\|_{L^1(\mathbb R)}$. 
		It turns out that this rate function is related to the fractal structure of the set $\Lambda$. 
		In this section, we will give some examples where this rate can be explicitly calculated. 

		To present our result, we want to introduce the concept of Minkowski dimension. 
	 Let $\lambda$ denote Lebesgue measure on $\mathbb R$, and let $\# A$ denote the cardinality of a given set $A$. 
	Define the Minkowski sum 
	\[A +\tilde A := \{a+\tilde a: a\in A, \tilde a\in \tilde A\}\] 
	for any subsets $A$ and $\tilde A$ of $\mathbb R$.
	Define
	\[rA := \{ra:a\in A\}\]
	for any $r\in \mathbb R$ and $A\subset \mathbb R$.
\begin{note}
	Notice that if both $A$ and $\tilde A$ are compact then so is $A+\tilde A$. 
	Also notice that if one of $A$ or $\tilde A$ is open then so is $A+ \tilde A$.
\end{note}
	Denote by $B^\mathrm o:=(-1,1)$ the unit open ball centered at the origin. 
	For  every  $A\subset \mathbb R$, if there exists a (unique) $\delta \in [0,1]$ such that 
	\[1-(\log r)^{-1}\log\lambda(A+rB^\mathrm o) \to \delta\] 
	as $r\downarrow 0$, then we say $A$ has Minkowski dimension $\delta$.
	For many examples and applications on the Minkowski dimension, see \cite{MR2977849}.
\begin{note}
	Here we list some examples regarding the Minkowski dimension.
\begin{itemize}
\item
	If $A\subset \mathbb R$ has strictly positive finite cardinality $|A|$, then $A$ is Minkowski measurable with Minkowski dimension $0$ and Minkowski content $2|A|$.
\item
	If $A\subset \mathbb R$ is compact and has strictly positive Lebesgue measure, then $A$ is Minkowski measurable with Minkowski dimension $1$ and Minkowski content $\lambda(A)$.
\item
	Any self-similar set satisfying the open set condition has Minkowski dimension \cite{MR1694290}.
	In particular, the ternary Cantor set has Minkowski dimension $\log 2/\log 3$.
	However, the ternary Cantor set is not Minkowski measurable \cite[Section 4.1]{MR1694290}.
\item
	Any non-lattice self-similar set satisfying the open set condition is Minkowski measurable \cite[Theorem 2.3 (1)]{MR1694290}. 
\end{itemize}
	See the book \cite{MR2977849} for many more examples and discussions.
\end{note}

		The proof of the following Proposition is postponed in Section \ref{:B:}.

\begin{proposition} \label{:B}
	Let $A$ be a compact subset of $\mathbb R$.
\begin{enumerate}
\item 
	If $A$ has positive finite cardinality, then $\sqrt{t}\|v^{(A, \mathbf 0)}_{t,\cdot}\|_{L^1(\mathbb R)}$  converges to $\Cr{:.1}\# A$ as $t\downarrow 0$ where $\C\label{:.1}:=\|v^{(\{0\}, \mathbf 0)}_{1,\cdot}\|_{L^1(\mathbb R)}\in (0,\infty)$.
\item
	If $A$ has positive Lebesgue measure, then $t\|v^{(A,\mathbf 0)}_{t,\cdot}\|_{L^1(\mathbb R)}$ converges to $2\lambda(A)$ as $t\downarrow 0$.
\item 
	If  $A$ has Minkowski dimension $\delta \in (0,1)$, then $(\log t)^{-1}\log\|v^{(A,\mathbf 0)}_{t,\cdot}\|_{L^1(\mathbb R)}$ converges to $-(1+\delta)/2$ as $t\downarrow 0$. 
\end{enumerate}
\end{proposition}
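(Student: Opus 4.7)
The plan rests on three inputs from~\eqref{:.05}: the parabolic scaling $v(t,x)\mapsto \alpha v(\alpha t,\sqrt\alpha\, x)$ preserves the equation; the super-solution property $(v_1+v_2)^2\ge v_1^2+v_2^2$ yields, via uniqueness, both subadditivity $v^{(A\cup A',\mathbf 0)}\le v^{(A,\mathbf 0)}+v^{(A',\mathbf 0)}$ and monotonicity $v^{(A,\mathbf 0)}\le v^{(A',\mathbf 0)}$ for $A\subset A'$; and the space-homogeneous solution $v^{(\R,\mathbf 0)}_{t,x}=2/t$ gives the universal bound $v^{(A,\mathbf 0)}\le 2/t$. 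Applying the scaling to $v^{(\{0\},\mathbf 0)}$ and invoking uniqueness produces the self-similar form $v^{(\{0\},\mathbf 0)}_{t,x}=t^{-1}g(x/\sqrt t)$ with $g\in L^1(\R)$ and $\int g=\Cr{:.1}$, so $\sqrt t\,\|v^{(\{0\},\mathbf 0)}_{t,\cdot}\|_{L^1(\R)}\equiv \Cr{:.1}$; by translation the same holds for any single-point solution.

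For part~(1), subadditivity gives the upper bound $\sqrt t\,\|v^{(A,\mathbf 0)}_{t,\cdot}\|_{L^1(\R)}\le \Cr{:.1}\#A$ immediately. For the matching lower bound I localize around each $a_i$ via $u^\tau(s,y):=\tau v^{(A,\mathbf 0)}(s\tau,\,a_i+\sqrt\tau\, y)$, which solves the same PDE. Monotonicity and subadditivity sandwich it,
\[
v^{(\{0\},\mathbf 0)}(s,y)\ \le\ u^\tau(s,y)\ \le\ \sum_j s^{-1}g\!\left(\frac{y+(a_i-a_j)/\sqrt\tau}{\sqrt s}\right),
\]
and since $(a_i-a_j)/\sqrt\tau\to\infty$ for $j\neq i$ while $g$ vanishes at infinity, both bounds collapse to $v^{(\{0\},\mathbf 0)}$ as $\tau\downarrow 0$. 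Fatou on disjoint patches about each $a_i$ then delivers $\liminf \sqrt t\,\|v^{(A,\mathbf 0)}_{t,\cdot}\|_{L^1(\R)}\ge n\Cr{:.1}$. Part~(2) follows the same scheme with $2/t$ replacing the single-point profile: $v\le 2/t$ gives $t\int_A v\le 2\lambda(A)$; the annulus $(A+\sqrt t B^{\mathrm o})\setminus A$ contributes $o(1/t)$ via continuity of Lebesgue measure on compact sets; the far-away region $\{d(\cdot,A)\ge\sqrt t\}$ is bounded through subadditivity against a finite $\sqrt t$-net of $A$ together with the tail decay of $g$; and the matching lower bound comes from localizing at Lebesgue density points of $A$, where monotonicity against $v^{(A\cap B(x_0,R\sqrt\tau),\mathbf 0)}$ together with a density argument identifies the rescaled limit as the constant~$2$.

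For part~(3), the Minkowski dimension condition gives $\lambda(A+\sqrt tB^{\mathrm o})=t^{(1-\delta)/2+o(1)}$. Combining $v\le 2/t$ with the far-tail control from part~(2) then yields $\|v^{(A,\mathbf 0)}_{t,\cdot}\|_{L^1(\R)}\le t^{-(1+\delta)/2+o(1)}$. Conversely, a maximal $\sqrt t$-separated subset $N\subset A$ has cardinality $\asymp t^{-\delta/2+o(1)}$, and monotonicity $v^{(A,\mathbf 0)}\ge v^{(\{a\},\mathbf 0)}$ integrated over the disjoint balls $B(a,\sqrt t/3)$ for $a\in N$ contributes $t^{-1/2}\int_{|y|\le 1/3}g(y)\,\mathrm dy$ each, producing the matching lower bound. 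Taking logarithms and dividing by $\log t$ gives the limit $-(1+\delta)/2$. The main technical obstacle, shared across parts, is the localization-and-identification step, i.e.\ rigorously identifying the rescaled limit of $v^{(A,\mathbf 0)}$ with the expected model profile and passing to the limit in the singular initial-trace problem~\eqref{:.05}; for part~(1) the explicit sandwich above settles it cleanly, while part~(2) at non-interior points of $A$ requires a more careful density-point analysis with a Vitali-type covering, and part~(3) reduces thereafter to combinatorial bookkeeping driven by the definition of Minkowski dimension.
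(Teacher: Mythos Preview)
Your approach is essentially correct but takes a genuinely different route from the paper. The paper does \emph{not} argue via PDE comparison; instead it invokes Le~Gall's probabilistic representation of $v^{(A,\mathbf 0)}$ through the Brownian snake, packaged as a lemma stating that there is a probability measure $\mathsf P$ on compact sets $\mathcal K\subset\mathbb R$ with
\[
\|v^{(A,\mathbf 0)}_{t,\cdot}\|_{L^1(\mathbb R)}
=\frac{2}{t}\,\mathsf E\big[\lambda(A-\sqrt{t/2}\,\mathcal K)\big],
\]
together with the existence of random radii $R,\tilde R$ (with $\mathsf E[\tilde R]<\infty$) such that a.s.\ an interval of radius $R$ sits inside $-\sqrt{1/2}\,\mathcal K$, which in turn sits inside an interval of radius $\tilde R$. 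From there all three parts reduce to elementary limits of $r\mapsto\lambda(A+rB^{\mathrm o})$ combined with dominated or monotone convergence in $\mathsf E$; no localization, no density-point analysis, no nets are needed. Your argument, by contrast, stays entirely on the analytic side (scaling, monotonicity, subadditivity, the self-similar one-point profile), which has the virtue of being self-contained and avoids importing the Brownian-snake machinery, at the cost of more hands-on work in each part.

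One technical point to tighten: in your Part~(2) upper bound, the far-region control ``via a $\sqrt t$-net and the tail of $g$'' does not go through as written. A $\sqrt t$-net $N\subset A$ satisfies $N\subset A$, so monotonicity points the wrong way; what you actually need is $A\subset N+\sqrt t\,\bar B$, and subadditivity then bounds $v^{(A,\mathbf 0)}$ by a sum of \emph{interval} solutions $v^{([a-\sqrt t,a+\sqrt t],\mathbf 0)}$, whose profile is $h=v^{([-1,1],\mathbf 0)}(1,\cdot)$, not the one-point $g$. Moreover, since $\sqrt t\cdot\#N\to\lambda(A)$, the resulting far-region contribution is bounded but does not vanish at scale $\sqrt t$; you should split at $R\sqrt t$ instead, obtain $\limsup_{t\downarrow0}t\|v_t\|_{L^1}\le 2\lambda(A)+\lambda(A)\int_{|y|\ge R}h(y)\,\mathrm dy$, and then send $R\to\infty$ (integrability of $h$ follows from the half-line tail bound available in the paper). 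With this fix your Part~(2) upper bound is complete; your density-point/Vitali lower bound sketch and the Part~(3) bookkeeping are fine, and the log-asymptotic in Part~(3) is forgiving enough that the same net argument already suffices there.
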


	\begin{example}[Infinitely many initial particles at one point]\label{:C}
		For instance, assume that there are infinitely many initial particles which are all located at the origin, i.e. $x_i = 0$ for every $i\in \mathbb N$. 
		In this case $\Lambda$ is the singleton $\{0\}$ and $\mu$ is the null measure $\mathbf 0$ on $\mathbb R\backslash\{0\}$.
		Now Theorem \ref{:E}  \emph{\ref{:EF}} and Proposition \ref{:B} says that that $\sqrt{t}Y_t(\mathbb R)$ converges to $\Cr{:.1}$ in $L^1$ as $t\downarrow 0$. 
	\end{example}

\begin{remark*}
	In Example \ref{:C},  the total population $Y_t(\mathbb R)$ is comparable to $\Cr{:.1}/\sqrt{t}$, which decreases even faster than the   rate  function   $2/t$  of Kingman's coalescent.
	One might find this surprising since one might expect the spatial movement of the particles, if not slowing down the coalescing, should not speed it up.
	However, this is only an illusion due to the different clocks used by the two models for their coalescing mechanisms---one uses the ordinary clock while the other uses the local time.
	For a better comparison, one might choose to observe $(Y_t(\mathbb R))_{t\geq 0}$ according to the clock of the local time.
	Recall that, if $(L_t)_{t\geq 0}$ is the local time of a Brownian motion at zero, then $\mathbb E[L_t]=\sqrt{2t/\pi}$.
	Denote by $\tilde N_l$ the total population $Z_t(\mathbb R)$ when the expected local time $\mathbb E[L_t]$ is at the level $l$.
	Now   from Example \ref{:C} we have  that $\tilde N_l$ is of order $1/l$ for small $l$, which behaves similar to Kingman's coalescent.
\end{remark*}

\begin{example}[$\Lambda$ with positive Lebesgue measure]\label{:P}
	Suppose that $\Lambda$ is compact and has positive finite Lebesgue measure and $\mu$ is the null measure supported on $\Lambda^c$. 
	Then by Proposition \ref{:B} (2) we have 
\(
	t  Y_t (\mathbb R) \overset{L^1}{\to} 2\lambda(\Lambda)
\)
	as $t\downarrow 0$. 
\end{example}
\begin{note}
	Here we list some examples regarding the Minkowski dimension.
\begin{itemize}
\item
	If $A\subset \mathbb R$ has strictly positive finite cardinality $|A|$, then $A$ is Minkowski measurable with Minkowski dimension $0$ and Minkowski content $2|A|$.
\item
	If $A\subset \mathbb R$ is compact and has strictly positive Lebesgue measure, then $A$ is Minkowski measurable with Minkowski dimension $1$ and Minkowski content $\lambda(A)$.
\item
	Any self-similar set satisfying the open set condition has Minkowski dimension \cite{MR1694290}.
	In particular, the ternary Cantor set has Minkowski dimension $\log 2/\log 3$.
	However, the ternary Cantor set is not Minkowski measurable \cite[Section 4.1]{MR1694290}.
\item
	Any non-lattice self-similar set satisfying the open set condition is Minkowski measurable \cite[Theorem 2.3 (1)]{MR1694290}. 
\end{itemize}
	See the book \cite{MR2977849} for many more examples and discussions.
\end{note}
\begin{example}[$\Lambda$ with known Minkowski dimension] \label{:T}
	Suppose that  $\Lambda$  is compact and has Minkowski dimension $\delta\in (0,1)$.
	Suppose also that $\mu$ is the null measure supported on $\Lambda^c$. 
	Then by Proposition \ref{:B} (3) we have $(\log t)^{-1}\log  Y_t(\mathbb R)$ converges to $-(1+\delta)/2$ in probability as $t\downarrow 0$.
\end{example}

\begin{note}
	\begin{issue}
		CB: Were we keeping or removing this conjecture? Can't recall what we discussed.
		ZS: Let us remove it.
	\end{issue}
	Observing from   Examples  \ref{:C}-\ref{:T}, it is perhaps reasonable to make the following conjecture. 
\begin{conjecture}
 	Suppose that $\Lambda$ is a Minkowski measurable set with dimension $\delta \in [0,1]$ and content $\kappa\in (0,\infty)$.
 	Then there exists a deterministic constant $\C\label{:.3}(\delta)>0$, depending only on $\delta$, such that   $t^{(1+\delta)/2} Z_t(\mathbb R)$  converges to $\Cr{:.3}(\delta)\kappa$ in $L^1$ as $t\downarrow 0$. 
\end{conjecture}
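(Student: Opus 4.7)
By Theorem~\ref{:E}\emph{(v)} with $\mu=\mathbf 0$ and $U=\mathbb R$, the conjecture is equivalent to the deterministic PDE statement $t^{(1+\delta)/2}\|v^{(\Lambda,\mathbf 0)}_{t,\cdot}\|_{L^{1}(\mathbb R)}\to \Cr{:.3}(\delta)\kappa$ as $t\downarrow 0$. The PDE \eqref{:.05} is invariant under the parabolic rescaling $v_{t,x}\mapsto \lambda v_{\lambda t,\sqrt{\lambda}x}$, which maps the initial trace $(A,\mathbf 0)$ to $(A/\sqrt{\lambda},\mathbf 0)$; integrating and setting $\lambda=1/t$, $r=1/\sqrt{t}$ gives the identity $\|v^{(\Lambda,\mathbf 0)}_t\|_{L^{1}} = t^{-1/2}\|v^{(r\Lambda,\mathbf 0)}_1\|_{L^{1}}$. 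Multiplying by $t^{(1+\delta)/2}$ reduces the conjecture to
\begin{equation*}
r^{-\delta}\bigl\|v^{(r\Lambda,\mathbf 0)}_{1,\cdot}\bigr\|_{L^{1}(\mathbb R)}\longrightarrow \Cr{:.3}(\delta)\,\kappa\qquad(r\to\infty).
\end{equation*}

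The main analytic input would be a \emph{spreading estimate}: $v^{(A,\mathbf 0)}_1(x)$ decays at least like a Gaussian in $\dist(x,A)$, uniformly over compact $A\subset\mathbb R$. I would derive this by combining the comparison principle for \eqref{:.05} with the supersolution bound $v^{(A,\mathbf 0)}_1(x)\le \sum_{a\in A'} v^{(\{a\},\mathbf 0)}_1(x)$ on a sufficiently dense discretization $A'\supset A$ (the sum is a supersolution of \eqref{:.05} away from $A'$), using the explicit Gaussian tails of the single-point profile $v^{(\{0\},\mathbf 0)}_1$ underlying $\Cr{:.1}$ in Proposition~\ref{:B}\emph{(1)}. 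The estimate yields an approximate additivity: whenever $\Lambda_1,\Lambda_2$ are compact with $\dist(\Lambda_1,\Lambda_2)\ge d$, the quantity $\bigl|\|v^{(\Lambda_1\cup\Lambda_2,\mathbf 0)}_1\|_{L^{1}}-\|v^{(\Lambda_1,\mathbf 0)}_1\|_{L^{1}}-\|v^{(\Lambda_2,\mathbf 0)}_1\|_{L^{1}}\bigr|$ decays super-polynomially in $d$. Consequently, well-separated pieces of $r\Lambda$ contribute essentially independently to $\|v^{(r\Lambda,\mathbf 0)}_1\|_{L^{1}}$ as $r\to\infty$.

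To pin down the constant, I would first treat a non-lattice self-similar model set $K_\delta\subset[0,1]$ with $K_\delta=\bigsqcup_{i=1}^{N}(s_i K_\delta+b_i)$, similarity dimension $\delta$ (so $\sum_i s_i^{\delta}=1$) and Minkowski content~$1$; such a set is Minkowski measurable by \cite{MR1694290}*{Theorem 2.3 (1)}. Since $rK_\delta=\bigsqcup_{i}(rs_iK_\delta+rb_i)$ with separation $r\min_{i\neq j}|b_i-b_j|\to\infty$, the approximate additivity above together with translation invariance yield
\begin{equation*}
g(r):=r^{-\delta}\bigl\|v^{(rK_\delta,\mathbf 0)}_1\bigr\|_{L^{1}}=\sum_{i=1}^{N}s_i^{\delta}\,g(rs_i)+o(1),\qquad r\to\infty.
\end{equation*}
The non-lattice hypothesis on $\{-\log s_i\}$ combined with a Lapidus--Pomerance style renewal argument then forces $g$ to converge, and we may define $\Cr{:.3}(\delta)$ as this limit. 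The endpoint cases $\delta=0,1$ are already covered by Proposition~\ref{:B}\emph{(1)(2)}, and consistency forces $\Cr{:.3}(0)=\Cr{:.1}/2$ and $\Cr{:.3}(1)=2$.

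The final step is transferring the constant from $K_\delta$ to an arbitrary Minkowski-measurable $\Lambda$ of dimension $\delta$ and content $\kappa$: I would bracket $\Lambda$ between disjoint unions of translated and scaled copies of $K_\delta$ whose total Minkowski content matches that of $\Lambda$ at successively finer dyadic scales, and conclude via approximate additivity together with the definition of Minkowski content. The main obstacle is precisely this transfer. Two Minkowski-measurable sets sharing the same dimension and content can cluster very differently at every scale, while $\|v^{(\cdot,\mathbf 0)}_1\|_{L^{1}}$ is genuinely sensitive to such clustering through the nonlinearity $-v^2/2$. Making the local comparison quantitative---perhaps via a tangent-measure argument showing that the local limit structure of $\Lambda$ near almost every one of its points is asymptotically equivalent to that of $K_\delta$, or through direct sub-/supersolution bracketing that preserves the Minkowski content to leading order---is the heart of the problem and the plausible reason this is stated only as a conjecture.
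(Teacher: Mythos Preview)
The paper does not prove this statement; it is explicitly labeled a conjecture and, per the authors' internal discussion, was slated for removal. There is therefore no proof in the paper to compare against. What the paper \emph{does} prove is the weaker Proposition~\ref{:B}, whose part~(3) gives only the logarithmic rate $(\log t)^{-1}\log\|v^{(A,\mathbf 0)}_{t,\cdot}\|_{L^1}\to -(1+\delta)/2$ under a bare Minkowski-dimension hypothesis, not the sharp constant under Minkowski measurability.

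Your reduction via Theorem~\ref{:E}\emph{\ref{:EF}} and the parabolic scaling is correct and matches the identity $v^{(A,\mathbf 0)}_{t,x}=\alpha^{2}v^{(\alpha A,\mathbf 0)}_{\alpha^{2}t,\alpha x}$ used in the proof of Lemma~\ref{:BL}. Your approximate-additivity/renewal programme is a reasonable line of attack, but note that the paper's own machinery would shorten several of your steps: Lemma~\ref{:BL}(1) gives $\|v^{(A,\mathbf 0)}_{t,\cdot}\|_{L^1}=2t^{-1}\mathsf E[\lambda(A-\sqrt{t/2}\,\mathcal K)]$, from which approximate additivity for well-separated sets is immediate (the error is controlled by the tail of $\diam\mathcal K$, which has Gaussian decay by~\eqref{:II.526}), and the spreading estimate you want is already~\eqref{:II.54}. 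Your discretization argument for the spreading bound is a bit loose --- a finite set $A'$ cannot contain an uncountable $A$ --- but this is moot given~\eqref{:II.54}.

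The genuine obstruction is exactly the one you name: passing from a self-similar model set to an arbitrary Minkowski-measurable $\Lambda$. In the probabilistic formulation this becomes the question of whether $\mathsf E[\lambda(\Lambda-\sqrt{t/2}\,\mathcal K)]/(\sqrt t)^{1-\delta}$ converges, i.e.\ whether the Minkowski-content asymptotic $\lambda(\Lambda+rB^{o})\sim\kappa r^{1-\delta}$ persists when the ball $rB^{o}$ is replaced by a small random dilate of the set $\mathcal K$. This is not obviously implied by Minkowski measurability alone, and neither your outline nor the paper supplies the missing argument; that is precisely why the statement remains a conjecture.
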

\end{note}

\subsection{Proof Strategy}
	Instead of comparing coalescing Brownian motions with the PDE \eqref{:.05} directly, the idea is to consider their dual counterpart. There are two duality relations used in this paper.
	One is the moment duality given by Shiga \cite{MR948717} between coalescing Brownian motions and 
	the Wright-Fisher SPDE \eqref{:II.3}.
	The other is the classical duality established between the PDE \eqref{:.05} and the super-Brownian motion as well as the Brownian snake (see \cite{MR1429263}).
	Brownian snake was introduced by Le Gall \cite{MR1207305} to study the genealogical structure of the 
	Super-Brownian motion, whose density, in the one-dimensional case satisfies the following SPDE (see \cite{MR958288}, \cite{MR983088})
\[
	\partial_t \tilde u_{t,x} = \frac{1}{2}\partial_x^2 \tilde u_{t,x} + \sqrt{\tilde u_{t,x}} \dot W_{t,x}, \quad t> 0, x\in \mathbb R.
\]
Observe that the SPDE above is quite similar to \eqref{:II.35}, in the sense that their noise coefficient are very close to each other for small values of $u$. 
	In particular, one expects that with high probability the non-negative random fields $(u_{t,x})$ and $(\tilde u_{t,x})$ stay close to each other on finite time intervals if they share the same small initial conditions $u_0 = \tilde u_0 \ll 1$.  
	
	In the actual proof, we build a connection between Wright-Fisher SPDE \eqref{:II.3} and the PDE \eqref{:.05} by searching for the Doob-Meyer decomposition of the process
\[
	s\mapsto \exp \left(- \int u_{s,y}v_{t-s,y}^{(\Lambda, \mu)}\mathrm dy\right), \quad s\in [0,t).
\]
	This idea was already explored by Tribe \cite{MR1339735} where he established the compact support property of the Wright-Fisher SPDE \eqref{:II.3}.  
	The challenge here is that it is not clear how to apply It\^o's formula directly to the above process up to time $t$, because $v_{t-s,y}$ approaches a singular value when $s\to t$ and $y\in \Lambda$.
	So we will use a sequence of so-called non-singular initial traces $\{(\varnothing, \mu^{n}):n\in \mathbb N\}$ to approximate the singular initial trace $(\Lambda, \mu)$ (trace with $\Lambda\not=\emptyset$), following the techniques developed in  \cite{MR1697494}. 
\subsection{Paper Outline}
	The rest of the paper is organized as follows. 
	In Section \ref{:I:}, we give the proof of Theorems \ref{:I} and \ref{thm:Markov}. 
	In Section \ref{:E:}, we give the proof of Theorem \ref{:E}.
	In Section \ref{:B:}, we give the proof of Proposition \ref{:B}.

\subsection*{Acknowledgements} 
	The work of the authors  was supported in part by ISF grants No.~1704/18 and 1985/22. 
	The first author is a Zuckerman Postdoctoral Scholar, and this work was supported in part by the Zuckerman STEM Leadership Program.
	The third author is supported in part at the Technion by a fellowship of the Israel Council for Higher Education. 
	We want to thank Omer Angel, Julien Berestycki, Pascal Maillard, Michel Pain, and Eviatar Procaccia for helpful conversations.

\section{Proof of Theorems \ref{:I} and \ref{thm:Markov}} \label{:I:}
\subsection{Initial traces}
	Let us first review some concepts and notations from \cite{MR1697494}. 
	Denote by $\mathcal T$ the collection of pairs $(A,\nu)$ where $A$ is a closed subset of $\mathbb R$ and $\nu$ is a non-negative Radon measure on $A^\mathrm c$. 
	Then $\mathcal T$ is the space of all the possible initial traces for the PDE \eqref{:.05}.
	For any $(A,\nu)$ and $(\tilde A,\tilde \nu)$ in $\mathcal T$, we say $(A,\nu)\preceq (\tilde A,\tilde \nu)$ if  $A\subset \tilde A$ and $\nu(B) \leq \tilde\nu(B)$ for every $B\in \mathscr B(\tilde A^\mathrm c)$.
	Note that $\preceq$ is a partial order on $\mathcal T$.
	Denote by $\mathscr M_{\text{reg}}^+$ the space of outer regular (not necessary locally bounded) Borel measures on $\mathbb R$. 
	Define a map $\eta:(A,\nu)\mapsto \eta^{(A,\nu)}$ from $\mathcal T$ to $\mathscr M_{\text{reg}}^+$ so that for every $B \in \mathscr B(\mathbb R)$,
\begin{equation} \label{:Ieta}
	\eta^{(A,\nu)}(B) 
	=
\begin{cases}
	\infty,
	& \quad B\cap A \neq \varnothing,
	\\ \nu(B), 
	&\quad B \cap A = \varnothing.
\end{cases}
\end{equation}
\begin{note}
	Notice that the following statement hold:
\begin{itemize}
\item
	$\eta^{(A,\nu)}(B)\geq 0$ for any $B\in \mathscr B(\mathbb R)$.
\item
	$\eta^{(A,\nu)}(\emptyset) = 0$.
\item
	For disjoint sequence $(B_k)_{k\in \mathbb N}$ in $\mathscr B(\mathbb R)$,
\begin{itemize}
\item
	if there exists $k\in \mathbb N$ such that $B_k\cap A \neq \emptyset$, then
\[
	\eta^{(A,\nu)}(\cup_{k=1}^\infty B_k)
	= \infty 
	= \sum_{k=1}^\infty \eta^{(A,\nu)}(B_k), 
\]
\item
	if $B_k \cap A = \emptyset$ for all $k\in \mathbb N$, then
\[
	\eta^{(A,\nu)}(\cup_{k=1}^\infty B_k) 
	= \nu(\cup_{k=1}^\infty B_k)
	=\sum_{k=1}^\infty \nu(B_k) 
	= \sum_{k=1}^\infty \eta^{(A,\nu)}(B_k).
\]
\end{itemize}
\item
	For any $B\in \mathscr B(\mathbb R)$,
\begin{itemize}
\item
	if $B\cap A \neq \varnothing$, then 
\[
	\eta^{(A,\nu)}(B) 
	= \infty 
	= \inf \{\eta^{(A,\nu)}(U): \text{~$U$ is an open set containing $B$}\};
\]
\item
	if $B\cap A = \varnothing$, then
\begin{align}
	&\eta^{(A,\nu)}(B) 
	= \nu(B) 
	= \inf \{\nu(U): \text{~$U$ is an open subset of $A^\mathrm c$ containing $B$}\}  
	\\&= \inf \{\eta^{(A,\nu)}(U): \text{~$U$ is an open subset of $A^\mathrm c$ containing $B$}\}
	\\&\geq \inf \{\eta^{(A,\nu)}(U): \text{~$U$ is an open set containing $B$}\}
	\geq \eta^{(A,\nu)}(B).
\end{align}  
\end{itemize}
\end{itemize}
	Therefore, we know that $\eta^{(A,\nu)}$ must be an element of $\mathscr M_{\text{reg}}^+$.
\end{note}
	For any $\eta \in \mathscr M_{\text{reg}}^+$, denote by $\mathcal S_\eta$ the set of singular points of $\eta$, i.e.
\[
	\mathcal S_\eta 
	:= \{x\in \mathbb R: \eta(U) = \infty \text{~for every open neighborhood $U$ of $x$}\},
\]
	and by $\mathcal R_\eta := \mathcal S_\eta^c$ the set of regular points of $\eta$.
	For an $\mathscr M_{\text{reg}}^+$-sequence $(\eta_n)_{n\in \mathbb N}$, we say it converges m-weakly to an $\eta \in \mathscr M_{\text{reg}}^+$ if the following two conditions hold:
\begin{statement}
	If $U$ is an open subset of $\mathbb R$ with $\eta(U)=\infty$ then $\lim_{n\to\infty}\eta_n(U) = \infty$.
\end{statement}
\begin{statement}
	For every compact $K \subset \mathcal R_\eta$, the sequence $(\eta_n(K))_{n\in \mathbb N}$ is eventually bounded and $\lim_{n\to \infty} \int \phi \mathrm d\eta_n = \int \phi \mathrm d\eta$ for every $\phi \in \mathcal C_\mathrm c(\mathcal R_\eta)$.
\end{statement}

	Let us now list some properties of the solutions to \eqref{:.05} given their initial traces. 
	The following two statements can be verified from \cite{MR1697494}*{Proposition 3.10} and \cite{MR1429263}*{Theorem 4}.
\begin{statement}\label{:II.522}
	For every $(A,\nu),(\tilde A,\tilde \nu) \in \mathcal T$, if $(A,\nu)\preceq (\tilde A,\tilde \nu)$, then $v^{(A,\nu)}_{t,x} \leq v^{(\tilde A,\tilde \nu)}_{t,x}$ for every $t>0$ and $x\in \mathbb R$. 
\end{statement}
\begin{statement} \label{:II.521}
	For any $\mathcal T$-sequence $((A_n,\nu_n))_{n\in \mathbb N}$ and $(A,\nu)\in \mathcal T$, if $\eta^{(A_n,\nu_n)}$ converges m-weakly to $\eta^{(A,\nu)}$, then $v^{(A_n,\nu_n)}_{t,x}$ converges to $v^{(A,\nu)}_{t,x}$ for every $t>0$ and $x\in \mathbb R$.
\end{statement}
	The following three statements can be verified  directly from  the uniqueness of the solution to \eqref{:.05}.
\begin{statement}\label{:II.523}
	$v^{(\mathbb R,\mathbf 0)}_{t,x}= 2/t$ for every $t>0$ and $x\in \mathbb R$.
\end{statement}
\begin{statement}\label{:II.524}
	$ v_{t,x}^{(A,\mathbf 0)} = v^{(A+\{z\},\mathbf 0)}_{t,x+z}$ for every closed $A\subset \mathbb R$, $t>0$, $x\in \mathbb R$, and $z\in\mathbb R$.
\end{statement}
\begin{statement} \label{:II.525}
	$ v_{t,x}^{(A,\mathbf 0)} = v_{t,-x}^{(-A,\mathbf 0)} $  for every  closed $A\subset \mathbb R$, $t>0$ and $x\in \mathbb R$. 
\end{statement}
	According to \cite{MR4029158}*{Proposition 3.2 \& Lemma 3.4}, there exists a constant $\C\label{:II.53}\geq 2$ such that  
\begin{equation}\label{:II.526}
	v^{((-\infty,0],\mathbf 0)}_{t,x} 
	\leq \frac{\Cr{:II.53}}{t}\left(1+\frac{x}{\sqrt{t}}\right)e^{-\frac{x^2}{2t}},
	\quad t>0,x\geq 0.
\end{equation}
	Using \eqref{:II.522}, \eqref{:II.523}, \eqref{:II.524}, \eqref{:II.525} and \eqref{:II.526}, it is straightforwad to verify that  
\begin{align}\label{:II.54}
	v_{t,x}^{([-k,k],\mathbf 0)}
       \leq \frac{\Cr{:II.53}}{t}\left(1+\frac{d(x,[-k,k])}{\sqrt{t}}\right)e^{-\frac{d(x,[-k,k])^2}{2t}},
       \quad k\geq 0, t > 0, x\in \mathbb R,
\end{align}
	where $d(x,[-k,k]):= \inf\{|x-z|:z\in [-k,k]\}$.
\begin{note}
	In fact,
\begin{align}
	v_{t,x}^{([-k,k],\mathbf 0)}
	\leq v^{([-2k,0],\mathbf 0)}_{t,x-k}
	\leq v^{((-\infty,0],\mathbf 0)}_{t,x-k}
	\leq \frac{\Cr{:II.53}}{t}\Big(1+\frac{d(x,[-k,k])}{\sqrt{t}}\Big)e^{-\frac{d(x,[-k,k])^2}{2t}}.
\end{align}
\end{note}

\subsection{Proof of Theorem \ref{:I} \emph{\ref{:II}}} \label{:II:}
	For two lists of real numbers  $(x_i)_{i\in I_0}\in \mathcal X$ and $(\tilde x_i)_{i\in \tilde I_0} \in \mathcal X$, we say $(\tilde x_i)_{i\in \tilde I_0} $ is a sublist of $(x_i)_{i\in I_0}$ if there is an strictly increasing map $\iota$ from $\tilde I_0$ to $I_0$ so that  $\tilde x_i = x_{\iota(i)}$ for every $i\in \tilde I_0$.
\begin{lemma} \label{:IIS}
	Let $(\tilde x_i)_{i\in \tilde I_0}$ be a sublist of $(x_i)_{i \in I_0}$.
	Suppose that $\tilde {\mathbf{X}}=\{\tilde X^{(i)}_t: t\geq 0, i\in \tilde I_0\}$ is a coalescing Brownian particle system with initial configuration $(\tilde x_i)_{i\in \tilde I_0}$.
	For every $U\in \mathscr B(\mathbb R)$ and $t\geq 0$, denote by $\tilde Z_t(U)$ the number of living particles contained in $U$ at time $t$ in the particle system $\tilde {\mathbf{X}}$.
	Then for every $U\in \mathscr B(\mathbb R)$, the process $(\tilde Z_t(U))_{t\geq 0}$ is stochastically dominated by the process $(Z_t(U))_{t\geq 0}$. 
\end{lemma}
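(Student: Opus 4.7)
My plan is to produce an explicit coupling of the two coalescing Brownian particle systems on a single probability space under which $\tilde Z_t(U)\le Z_t(U)$ holds for every $t\ge 0$ and every $U\in\mathscr B(\R)$, almost surely; this will automatically yield the claimed process-level stochastic domination.

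The first step will be to establish a \emph{label-invariance} property of the construction in \eqref{:.01}--\eqref{:filtration}: the joint law of the counting field $(Z_t(U))_{t\ge 0,\,U\in\mathscr B(\R)}$ should depend only on the multiset $\{x_i:i\in I_0\}$ of initial positions and not on the specific bijection $i\mapsto x_i$ chosen to label them. Although Tribe's recursion treats labels asymmetrically (smaller labels kill larger ones), this invariance should follow from the exchangeability of the i.i.d.\ family $\{(B^{(i)}-x_i,\mathbf e^{(i)})\}_{i\in I_0}$ combined with the strong Markov property: after any coalescence event the two Brownian trajectories involved are exchangeable conditionally on the meeting location, so which of the pair we mark as ``killed'' has no effect on the distribution of the alive point measure. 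I plan to verify this directly on the recursion, starting with a single transposition of labels and extending by composition to arbitrary permutations.

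Using label invariance, I may relabel $\mathbf X$ so that the sublist particles carry the labels $\{1,\ldots,|\tilde I_0|\}$ while the remaining particles carry labels above $|\tilde I_0|$, without changing the joint law of $(Z_t(U))$; the infinite case is handled by a monotone truncation to finite sublists. Under this relabeling, the key structural feature of Tribe's recursion is that each $\zeta_i$ is a function only of $B^{(1)},\ldots,B^{(i)}$ and $\mathbf e^{(1)},\ldots,\mathbf e^{(i)}$, so the lifetimes of the first $|\tilde I_0|$ particles are unaffected by the presence of the later-labeled ones. Running Tribe's construction for $\tilde{\mathbf X}$ on the same probability space using precisely the sub-collection $\{(B^{(i)},\mathbf e^{(i)})\}_{i=1}^{|\tilde I_0|}$ therefore yields $\tilde\zeta_i=\zeta_i$ for every $i\le|\tilde I_0|$, and the pointwise inequality
\[
\tilde Z_t(U)=\sum_{i=1}^{|\tilde I_0|}\mathbf 1_{\{t<\zeta_i\}}\mathbf 1_{\{B^{(i)}_t\in U\}}\le\sum_{i=1}^{|I_0|}\mathbf 1_{\{t<\zeta_i\}}\mathbf 1_{\{B^{(i)}_t\in U\}}=Z_t(U)
\]
follows immediately from $\tilde I_0\subset I_0$ in this relabeling.

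The main obstacle is the label-invariance step, which must be established at this point without invoking Theorem~\ref{thm:Markov}, whose proof will itself rely on Lemma~\ref{:IIS}. The delicate point is that the surviving Brownian motion after a coalescence is, in Tribe's construction, a pre-assigned trajectory rather than a freshly started one, so the equality of the two laws cannot simply be read off from the formulas and requires a genuine coupling argument; I expect to handle it by coupling two Tribe constructions differing by a single transposition of labels and matching the joint laws of alive positions inductively across successive coalescence times via the strong Markov property.
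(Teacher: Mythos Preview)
The paper omits the proof entirely, stating only that the lemma ``can be verified by a straightforward coupling method,'' so your plan is effectively filling in what the authors leave implicit; the two-step strategy (label-invariance of the counting process, then exploit that in Tribe's recursion $\zeta_i$ depends only on particles $1,\ldots,i$) is sound and is almost certainly what they have in mind.

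One simplification: your circularity worry is misplaced. Theorem~\ref{thm:Markov} invokes only Theorem~\ref{:I}\ref{:IF} and Proposition~\ref{prop:finitecrossing}, neither of which uses Lemma~\ref{:IIS}; it is only Theorem~\ref{:I}\ref{:II} that does. More to the point, Shiga's duality for the finite system is an external input \cite{MR948717}*{Theorem 5.2}, and the right-hand side of
\[
\mathbb E\Big[\prod_{i\in I_t}\big(1-f(X_t^{(i)})\big)\Big]=\mathbf E_f\Big[\prod_{i=1}^n\big(1-u_{t,x_i}\big)\Big]
\]
is manifestly symmetric in $x_1,\ldots,x_n$. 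Combined with the Markov property of the \emph{labelled} system $(X^{(1)},\ldots,X^{(n)})$, which is immediate from the construction, this yields label-invariance of $(Z_t)_{t\ge 0}$ for finite $I_0$ without the transposition induction you sketch. The one place that does require genuine care is the infinite-sublist case: when $\tilde I_0=\mathbb N$ no bijection of $\mathbb N$ puts $\iota(\tilde I_0)$ ahead of its complement, so relabeling is unavailable. Your truncation idea is correct, but note that the couplings for different truncation levels $m$ arise from different relabelings and hence live on different probability spaces, so passing to the limit needs a short Strassen/Skorokhod-type argument rather than a bare monotone limit.
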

	Lemma \ref{:IIS} can be verified by a straightforward coupling method. We omit the details.
\begin{proof}[Proof of Theorem \ref{:I}\ref{:II}]
	We construct another particle system that is easily seen to have infinitely many particles  in $U$  for all time, and  is dominated by that of     coalescing Brownian motions.
	Notice that $ \supp(\Lambda, \mu)$ is also the support of the set $\{x_i:i\in I_0\}$.
	The condition of the theorem implies that $ \supp(\Lambda, \mu)$ is unbounded, and therefore there are infinitely many initial particles, i.e. $I_0 = \mathbb N$.
	Since   $U$ is an interval and $U\cap \supp(\Lambda,\mu)$ is   unbounded,  there exists a subsequence $(\tilde x_i)_{i\in \mathbb N}$ of $(x_i)_{i\in \mathbb N}$ such that  $\{ (\tilde x_i-2, \tilde x_i+2): i\in \mathbb N\}$ is a family of disjoint subsets of $U$. 
	Let $\tilde{\mathbf X}$ be a  coalescing Brownian particle system with initial configuration $(\tilde x_i)_{i\in \mathbb N}$. 
	Let $\tilde Z_t(U)$ be the number of the particles contained in $U$ at time $t\geq 0$ in the particle system $\widetilde{\mathbf X}$.
	From Lemma \ref{:IIS}, $(\tilde Z_t(U))_{t\geq 0}$ is stochastically dominated by  $(Z_t(U))_{t\geq 0}$.
	Now we only have to show that $\tilde Z_t(U) = \infty$ for every $t\geq 0$ almost surely.
	
	To do that, we want to construct another Brownian particle system.
	Let $\{(\hat B_t^{(i)})_{t\geq 0}:i\in \mathbb N\}$ be a sequence of independent Brownian motions such that $\hat B_0^{(i)} = \tilde x_i$ for each $i\in \mathbb N$. 
	Define $\hat \zeta_i:= \inf\{t\geq 0: |\hat B^{(i)}_t-\hat B^{(i)}_0|\geq 1\}$ and 
	\begin{equation}
		\hat X_t^{(i)} = \begin{cases}
			\hat B^{(i)}_t, \quad &\text{if~} t\in [0,\hat \zeta_i),
			\\ \dagger, \quad &\text{otherwise},
		\end{cases}
	\end{equation}
		for each $i\in \mathbb N$.
	Now $ \hat{\mathbf X}:=\{\hat X_t^{(i)}:t\geq 0, i\in \mathbb N\}$ is a system of independent Brownian particles where each particle is killed when the distance it travels reaches  $1$, that is, the $i$-th particle is killed at time $\hat \zeta_i$. 
	Denote by $\hat Z_t(U)$ the number of the particles contained in $U$ at time $t\geq 0$ in the particle system $\hat{\mathbf X}$.
	Since the initial particles in $\hat{\mathbf X}$ are located away from each other with distance at least $4$, each particle in $\hat{\mathbf X}$ is killed before it can meet with any other particles.
	Therefore  $(\hat Z_t(U))_{t\geq 0}$ is stochastically dominated by $(\tilde Z_t(U))_{t\geq 0}$.
	
	To finish the proof, we only have to show that $\hat Z_t(U) = \infty$ for every $t\geq 0$ almost surely.
		First observe that $\hat Z_t(U)$ is the total population of the system $\hat{\mathbf X}$, since for every $i\in\mathbb N$, particle $i$ are killed before it can leave $[x_i-1,x_i+1]\subset U$.
		In particular, we have $\hat Z_t(U) = \sum_{i\in \mathbb N} \mathbf 1_{\{\hat \zeta_i > t\}}$ which is a non-increasing process in $t\geq 0$.
	Also observe that $(\hat \zeta_i)_{i\in \mathbb N}$ is a family of i.i.d. random variables with $\mathbb P(\hat \zeta_i > t)>0$ for every $t \geq 0$.
	Now the desired result follows from the second Borel-Cantelli lemma.
\end{proof}

\subsection{Proof of Theorem \ref{:I}\emph{\ref{:IF}}} \label{:IF:}

	Recall that  $\mathbf X=\{X_t^{(i)}: t\geq 0, i\in I_0\}$ is a coalescing Brownian particle system with initial configuration $(x_i)_{i\in I_0}$ and initial trace $(\Lambda, \mu) = \Psi((x_i)_{i\in I_0})$. 
	If there are finitely many initial particles, i.e. $\# I_0 < \infty$, then the statement in Theorem \ref{:I}\emph{\ref{:IF}} holds automatically because $Z_t(U) \leq \# I_0$ by definition.
	Therefore, in the rest of this subsection, we assume that there are infinitely many initial particles, i.e. $I_0 = \mathbb N$.

	Let $\theta$ be a non-negative increasing continuous function from $[0, 1)$ to $[1,\infty)$ such that $\theta(0) = 1$ and $\theta(\gamma) = -\log(1-\gamma)/\gamma$ for every $\gamma \in (0, 1)$. 
Let us state several lemmas that will be proved later in this subsection.

\begin{lemma}\label{:IFP_1}
	Let $U\subset \mathbb R$ be an open interval and $\varepsilon \in (0,1)$.
	Let $(u_{t,x})_{t\geq 0, x\in \mathbb R}$ be the solution to the Wright-Fisher SPDE \eqref{:II.3} with initial condition $u_0 = \varepsilon \mathbf 1_U$.
	Let $F$ be a closed interval containing the set $\{x_i:i\in \mathbb N\}$. 
	Then for any $t>0$ and $\gamma \in (\varepsilon, 1)$, we have
\begin{equation}\label{:IFP_11}
	\mathbb E\left[(1 - \ve)^{Z_t(U)}\right] 
	\geq \mathbf E_{\varepsilon \mathbf 1_U}\left[\exp\left(-\theta(\gamma)\sum_{i = 1}^{\infty}u_{t,x_i}\right)\right] - \mathbf E_{\varepsilon \mathbf 1_U}\left(\sup_{s \leq t, y \in F} u_{s, y} > \gamma\right),
\end{equation}
	and
\begin{equation} \label{:IFP_12}
	\mathbb E\big[(1 - \ve)^{Z_t(U)}\big]
	\leq \mathbf E_{\ve \mathbf 1_U}\left[\exp\left(-\sum_{i = 1}^{\infty}u_{t,x_i}\right)\right].
\end{equation}
\end{lemma}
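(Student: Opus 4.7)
The plan is to derive an exact identity from Shiga's moment duality applied to finitely many particles, pass to the infinite system by monotone truncation, and finally extract \eqref{:IFP_11} and \eqref{:IFP_12} via two elementary scalar inequalities in the exponent.

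First I would fix $n\in\mathbb{N}$ and introduce $\mathbf{X}^{(n)}$, the coalescing Brownian particle system obtained from \eqref{:.01}--\eqref{:mu} using only the labels $1,\dots,n$ (with the same Brownian motions and exponential clocks already supplied for $\mathbf{X}$). Since the killing of particle $i$ uses only the local times $L^{(i,j)}$ with $j<i$, the lifetimes $\zeta_i$ are unaffected by this restriction, so if $Z^{(n)}_t(U)$ denotes the corresponding occupation count, then $Z^{(n)}_t(U)\uparrow Z_t(U)$ almost surely as $n\to\infty$. For this finite system, Shiga's moment duality (cf.\ \cite{MR948717}*{Theorem~5.1}) gives
\[
\mathbb{E}\bigl[(1-\varepsilon)^{Z^{(n)}_t(U)}\bigr] = \mathbf{E}_{\varepsilon\mathbf{1}_U}\Bigl[\prod_{i=1}^n (1-u_{t,x_i})\Bigr].
\]
Each side is monotone decreasing in $n$ and bounded between $0$ and $1$, so dominated convergence would yield the limiting identity
\[
\mathbb{E}\bigl[(1-\varepsilon)^{Z_t(U)}\bigr] = \mathbf{E}_{\varepsilon\mathbf{1}_U}\Bigl[\exp\Bigl(\sum_{i=1}^\infty \log(1-u_{t,x_i})\Bigr)\Bigr].
\]

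From there, the upper bound \eqref{:IFP_12} is immediate via the pointwise inequality $\log(1-x)\leq -x$ on $[0,1)$. For the lower bound \eqref{:IFP_11}, I would use that $x\mapsto -x^{-1}\log(1-x)$ is non-decreasing on $(0,1)$ with limit $1$ at $0$, so $\log(1-x)\geq -\theta(\gamma)x$ whenever $x\in[0,\gamma]$. Since every $x_i$ lies in $F$, on the event $A_\gamma:=\{\sup_{s\leq t,\,y\in F}u_{s,y}\leq\gamma\}$ each $u_{t,x_i}\leq\gamma$, and hence the integrand in the identity above is bounded below by $\exp(-\theta(\gamma)\sum_i u_{t,x_i})\mathbf{1}_{A_\gamma}$. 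Decomposing the full expectation and bounding the $A_\gamma^c$-contribution crudely by $\mathbf{P}(A_\gamma^c)$ then produces \eqref{:IFP_11}.

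The main obstacle is the careful invocation of Shiga's duality at finite $n$, which ultimately rests on an It\^o-formula computation on the dual quantity $s\mapsto\prod_{i\leq n}(1-u_{t-s,X^{(i)}_s})$: the Wright-Fisher noise and the independent Brownian trajectories produce martingale contributions that balance, while the quadratic-variation terms combine with the local-time clocks to reconstruct the coalescence mechanism exactly. Once that duality is in hand, the passage $n\to\infty$ is painless since all quantities are bounded by $1$ and monotone. The hypothesis $\gamma>\varepsilon$ plays no role in the argument itself but makes the lower bound informative: it is precisely what guarantees $u_0=\varepsilon\mathbf{1}_U<\gamma$ on $F$, so that $\mathbf{P}(A_\gamma^c)$ has a chance to be small (and in fact is small for $t$ small).
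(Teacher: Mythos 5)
Your proposal is correct and follows essentially the same route as the paper's proof: truncate to the first $n$ particles, apply Shiga's moment duality, pass to the limit by monotonicity, and then extract the two bounds from the elementary scalar inequalities $\log(1-z)\leq -z$ and $-\log(1-z)\leq\theta(\gamma)z$ on $[0,\gamma]$ (the latter applied on the good event where $u$ stays below $\gamma$ on $F$). The only cosmetic difference is that the paper invokes the monotone convergence theorem and cites \cite{MR948717}*{Theorem~5.2} rather than Theorem~5.1, but the substance is identical.
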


	For every $t> 0$, closed interval $F$, and $(A,\nu)\in \mathcal T$, define 
\begin{equation} \label{:IFP_15}
	\mathcal V^{(A,\nu,F)}_t 
	:= \int_0^t   \int_{F^{\mathrm c}}  \left(v^{(A,\nu)}_{r,z}\right)^2 \mathrm dz\mathrm dr.
\end{equation}

\begin{lemma}\label{:IFP_2}
	Let  $t > 0$.
\begin{enumerate}
\item
	If $U$ is an open interval so that $U \cap \{x_i:i\in \mathbb N\}$ is bounded, then $\int_U v^{(\Lambda, \mu)}_{t, y}\mathrm dy$ is finite.
\item
	If $F$ is a closed interval containing $\cup_{i\in \mathbb N}(x_i-1,x_i+1)$, then $\mathcal V^{(\Lambda,\mu,F)}_t$ is finite.
\end{enumerate}
\end{lemma}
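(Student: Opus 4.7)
The plan is to prove both parts by combining the monotonicity~\eqref{:II.522} with the explicit Gaussian bounds~\eqref{:II.526} and~\eqref{:II.54}, after dominating the singular initial trace $(\Lambda,\mu)$ by $(S,\mathbf 0)$ for some closed $S\supset \supp(\Lambda,\mu)$ of simple shape. Observe that $\supp(\Lambda,\mu) = \overline{\{x_i:i\in\mathbb N\}}$, and that any accumulation point of $\{x_i\}$ lying in the open set $U$ must be a limit of $x_i$'s eventually in $U$, so the boundedness of $U\cap \{x_i\}$ in part~(1) carries over to $U\cap\supp(\Lambda,\mu)$; likewise the hypothesis $\cup_i(x_i-1,x_i+1)\subset F$ in part~(2) forces $\supp(\Lambda,\mu)$ to lie at distance at least $1$ from $F^\mathrm c$.

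For part~(1), fix $[a',b']\supset U\cap\supp(\Lambda,\mu)$ and argue by cases on the shape of $U$. If $U$ is bounded then $\int_U v^{(\Lambda,\mu)}_{t,y}\,\mathrm dy\le 2|U|/t$ by~\eqref{:II.522} and~\eqref{:II.523}. If $U=(a,\infty)$ then $\supp(\Lambda,\mu)\cap(b',\infty)=\varnothing$, so $(\Lambda,\mu)\preceq ((-\infty,b'],\mathbf 0)$; then~\eqref{:II.526} combined with the translation~\eqref{:II.524} gives
\[
	v^{(\Lambda,\mu)}_{t,y}
	\leq \frac{\Cr{:II.53}}{t}\left(1+\frac{y-b'}{\sqrt{t}}\right)e^{-(y-b')^2/(2t)},\quad y\geq b',
\]
which is integrable over $(b',\infty)$ by Gaussian tail estimates, while on $(a,b']$ the bound $v\le 2/t$ again suffices. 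The remaining cases $U=(-\infty,b)$ and $U=\mathbb R$ are treated by the same recipe, replacing~\eqref{:II.526} by its reflected version obtained from~\eqref{:II.525} or by~\eqref{:II.54}.

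For part~(2), if $F=\mathbb R$ the claim is trivial; otherwise choose $S$ to be $[a+1,b-1]$ or the appropriate shifted half-line according to whether $F=[a,b]$ or $F$ is one-sided, so that $\supp(\Lambda,\mu)\subset S$ and $d(z,S)\geq 1$ for every $z\in F^\mathrm c$. Then~\eqref{:II.54} (and its half-line analogue obtained from~\eqref{:II.526}, \eqref{:II.524}, \eqref{:II.525}) gives
\[
	v^{(\Lambda,\mu)}_{r,z}
	\leq \frac{\Cr{:II.53}}{r}\left(1+\frac{d(z,S)}{\sqrt r}\right)e^{-d(z,S)^2/(2r)}, \quad z\in F^\mathrm c.
\]
Squaring, integrating in $z$ and substituting $w=d(z,S)/\sqrt r$ yields a bound of the form
\[
	\int_{F^\mathrm c}\bigl(v^{(\Lambda,\mu)}_{r,z}\bigr)^2\,\mathrm dz
	\leq \frac{C}{r^{3/2}}\int_{1/\sqrt r}^\infty (1+w)^2e^{-w^2}\,\mathrm dw.
\]
The inner tail integral is bounded by a constant for $r\geq 1$ and is of order $r^{-1/2}e^{-1/r}$ as $r\downarrow 0$, so the right-hand side is integrable on $(0,t)$. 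This is exactly where the proof relies on the hypothesis that $F$ thickens $\{x_i\}$ by a unit neighbourhood: the bare $1/r$ factor in~\eqref{:II.54} is not integrable over $r\in (0,t)$, and only the Gaussian suppression $e^{-d(z,S)^2/(2r)}$ arising from the uniform lower bound $d(z,S)\ge 1$ on $F^\mathrm c$ makes $\mathcal V^{(\Lambda,\mu,F)}_t$ finite. This is the main technical point of the argument.
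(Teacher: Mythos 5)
Your proof is correct and takes essentially the same route as the paper: dominate $(\Lambda,\mu)$ by a simple trace $(S,\mathbf 0)$ using~\eqref{:II.522}, then invoke the Gaussian tail estimates~\eqref{:II.526} and~\eqref{:II.54} (via~\eqref{:II.524},~\eqref{:II.525}) — the paper organizes the cases around the shape of the smallest closed interval $\tilde F$ containing $\{x_i\}$ rather than around $U$ and $F$ directly, but the key mechanism (and in particular the role of the unit gap between $\supp(\Lambda,\mu)$ and $F^{\mathrm c}$ in making $\mathcal V^{(\Lambda,\mu,F)}_t$ finite) is identical. Your closing remark about the bare $1/r$ factor not being integrable without the Gaussian suppression correctly identifies the crux of part~(2).
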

\begin{lemma}\label{:IFP_25}
	Let $U$ be an open interval. 
	Let $F$ be a closed interval containing $\cup_{i\in \mathbb N}(x_i-1,x_i+1)$. 
	Let $t>0$   and $0\leq \ve \leq \gamma < 1$. 
	Suppose that $(u_{t,x})_{t\geq 0, x\in \mathbb R}$ is a solution to the Wright-Fisher SPDE with initial condition $u_0 = \ve \mathbf 1_U$.
	Then
\begin{align}
	& \mathbf E_{\varepsilon \mathbf 1_U}\left[\exp\left(-\theta(\gamma)\sum_{i = 1}^\infty u_{t, x_i}\right)\right]
		\\\label{:IFP_251}&\geq \exp\left(-\frac{\ve}{1- \gamma}\int_U v^{(\Lambda, \mu)}_{t,y} \mathrm d y\right) - \frac{\ve}{2(1 - \gamma)}\mathcal V^{(\Lambda, \mu,F)}_t -  \mathbf P_{\varepsilon \mathbf 1_U}\left(\sup_{s \leq t, y \in F} u_{s, y} > \gamma\right),
\end{align}
	and 
\begin{equation} \label{:IFP_252}
	\mathbf E_{\varepsilon \mathbf 1_U}   \left[\exp\left(-\sum_{i=1}^\infty u_{t,x_i}\right)\right]
	\leq \exp\left(-\varepsilon \int_U v^{(\Lambda,\mu)}_{t,y} \mathrm dy\right).
\end{equation}
\end{lemma}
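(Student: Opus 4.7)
The plan is to couple the SPDE solution $u$ with the PDE solution $v^{(\Lambda,\mu)}$ through an exponential functional, extract its Doob--Meyer decomposition via It\^o's formula, and pass to a limit through a non-singular approximation of the trace. Because $v^{(\Lambda,\mu)}_{t-s,\cdot}$ develops a singularity on $\Lambda$ as $s\uparrow t$, first replace it by $v^n := v^{(\varnothing,\mu^n)}$ with $\mu^n := \sum_{i=1}^{n}\delta_{x_i}$, which is smooth and bounded on $[0,t]\times\mathbb R$. Statements \eqref{:II.521} and \eqref{:II.522} give $v^n \leq v^{(\Lambda,\mu)}$ and $v^n\uparrow v^{(\Lambda,\mu)}$ pointwise (since $\eta^{(\varnothing,\mu^n)}$ converges m-weakly to $\eta^{(\Lambda,\mu)}$), while Lemma \ref{:IFP_2} makes both $\int_U v^{(\Lambda,\mu)}_t\,\mathrm dy$ and $\mathcal V^{(\Lambda,\mu,F)}_t$ finite.

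For each $n$ and each constant $c>0$, set $M^n_s := \exp\bigl(-c\int u_{s,y}\,v^n_{t-s,y}\,\mathrm dy\bigr)$ for $s \in [0,t]$. Using the mild form \eqref{:II.35} for $u$, the PDE \eqref{:.05} for $v^n$ (time-reversed), and integration by parts against the Laplacian, It\^o's formula yields
\[
\mathrm d M^n_s \;=\; \tfrac{c}{2}\,M^n_s\!\int u_{s,y}\,(v^n_{t-s,y})^2\bigl[(c-1)-c\,u_{s,y}\bigr]\,\mathrm dy\,\mathrm ds \;-\; c\,M^n_s\,\mathrm d N^n_s,
\]
where $N^n$ is a local martingale driven by the noise $\sqrt{u(1-u)}\,\dot W$. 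For the upper bound \eqref{:IFP_252}, take $c=1$: the drift becomes $-\tfrac12 M^n_s\!\int u_s^2(v^n_{t-s})^2\leq 0$, so $M^n$ is a $(0,1]$-valued supermartingale and $\mathbf E_{\ve\mathbf 1_U}[M^n_t] \leq M^n_0 = \exp(-\ve\int_U v^n_t\,\mathrm dy)$. Since $\int u_s v^n_{t-s}\,\mathrm dy \to \sum_{i=1}^n u_{t,x_i}$ as $s\uparrow t$ (by continuity of $u$ and $v^n_{r}\to\mu^n$ as $r\downarrow 0$), monotone/dominated convergence in $n$ yields \eqref{:IFP_252}.

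For \eqref{:IFP_251}, take $c = 1/(1-\gamma)$, so that $(c-1) - c u_s = (\gamma - u_s)/(1-\gamma)$ is non-negative precisely when $u_s\leq\gamma$. Introduce the stopping time $\tau := \inf\{s\in[0,t]:\sup_{y\in F} u_{s,y}>\gamma\}\wedge t$. For $s\leq\tau$ and $y\in F$ the integrand of the drift is non-negative, while the contribution from $y\in F^c$ is controlled in absolute value by combining the pointwise bound $|(c-1) - c u_s|\leq\max(1,c-1)$ with the a priori estimate $\mathbf E_{\ve\mathbf 1_U}[u_{s,y}]\leq\ve$ (immediate from \eqref{:II.35} and the vanishing mean of the noise), giving
\[
\mathbf E_{\ve\mathbf 1_U}[M^n_\tau] \;\geq\; M^n_0 \;-\; \tfrac{\ve}{2(1-\gamma)}\,\mathcal V^{(\Lambda,\mu,F)}_t.
\]
Since $M^n_\tau = M^n_t$ on $\{\tau=t\}$ and $M^n_\tau\leq 1$ otherwise, $\mathbf E_{\ve\mathbf 1_U}[M^n_t] \geq \mathbf E_{\ve\mathbf 1_U}[M^n_\tau] - \mathbf P_{\ve\mathbf 1_U}(\tau<t)$. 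Finally, the elementary inequality $y-1\geq\log y$ applied at $y=1/(1-\gamma)$ gives $c \geq \theta(\gamma)$, so $M^n_t \leq \exp(-\theta(\gamma)\sum_{i=1}^n u_{t,x_i})$; moreover $M^n_0\geq\exp(-\tfrac{\ve}{1-\gamma}\int_U v^{(\Lambda,\mu)}_t\,\mathrm dy)$ by $v^n \leq v^{(\Lambda,\mu)}$, and $\{\tau<t\}=\{\sup_{s\leq t,\,y\in F}u_{s,y}>\gamma\}$. Letting $n\to\infty$ by monotone/dominated convergence concludes \eqref{:IFP_251}.

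The hardest step is the drift control on $F^c$ in the lower bound: obtaining the precise coefficient $\tfrac{\ve}{2(1-\gamma)}$ (rather than the naive $\tfrac{\ve}{2(1-\gamma)^2}$ from a crude bound) requires a careful optimisation of $u_s|(c-1)-cu_s|$ across the two regimes $u_s\leq\gamma$ and $u_s>\gamma$, exploiting the interplay between the pointwise estimate $u_s\leq 1$ and the expectation estimate $\mathbf E_{\ve\mathbf 1_U}[u_s]\leq\ve$. The secondary technical points---the rigorous application of It\^o's formula to the spatial functional $\int u_s v^n_{t-s}\,\mathrm dy$ and the genuine martingale property of $N^n$---are handled by the uniform boundedness of $v^n$ on $[0,t]\times\mathbb R$ and the finiteness $\int_0^t\!\!\int (v^n_{t-s,y})^2\,\mathrm dy\,\mathrm ds < \infty$ implied by Lemma \ref{:IFP_2}.
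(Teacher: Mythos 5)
Your proposal is correct and follows essentially the same route as the paper: approximate the singular trace by the non-singular $(\varnothing,\mu^n)$, take the exponential of $\int u_s v^n_{t-s}$, obtain its Doob--Meyer decomposition via It\^o's formula, stop at the first time $\sup_{y\in F}u_{s,y}$ exceeds $\gamma$, control the drift on $F^c$ using $u(\gamma-u)\geq -(1-\gamma)u$ and $\mathbf E_{\ve\mathbf 1_U}[u_{r,z}]\leq\ve$, and pass $n\to\infty$. The only cosmetic difference is that the paper absorbs $\theta(\gamma)$ into the approximating measure $\mu^{(\gamma,n)}=(1-\gamma)\theta(\gamma)\sum_{i\leq n}\delta_{x_i}$ so that $M_t^{(\gamma,n)}\to\theta(\gamma)\sum_i u_{t,x_i}$ exactly, whereas you keep $\mu^n$ unscaled, set $c=1/(1-\gamma)$, and finish with the inequality $c\geq\theta(\gamma)$; both yield identical constants.

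Two small inaccuracies in your technical remarks, neither fatal: $v^n_{t-s,\cdot}$ is \emph{not} uniformly bounded on $[0,t]\times\mathbb R$ (it blows up near the atoms of $\mu^n$ as $s\uparrow t$), and the crucial finiteness $\iint_0^t(v^n_{t-s,y})^2\,\mathrm ds\,\mathrm dy<\infty$ over all of $\mathbb R$ is not supplied by Lemma~\ref{:IFP_2} (which only concerns $v^{(\Lambda,\mu)}$ on $F^c$); it comes from the classical Brezis--Friedman estimate for solutions with finite-measure initial data, which is exactly what the paper invokes to make the stochastic integral a genuine $L^2$-martingale. Also, the drift control you describe as ``a careful optimisation across two regimes'' is really the single elementary bound $u(\gamma-u)\geq u(\gamma-1)$ valid for all $u\in[0,1]$, combined with $c^2(1-\gamma)=1/(1-\gamma)$; no case splitting is needed.
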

\begin{lemma}\label{:IFP_3}

	Let $U$ be an open interval, and $F$ be a closed interval. 
	Let $t> 0$ and $0< \gamma < 1$.
	Suppose that $U \cap F$ is bounded.
	Then
\[
	\C\label{:IFP.31}(U, F, t, \gamma)
	:= \sup_{\ve \in (0,\gamma/2)} \frac{1}{\ve}   \mathbf P_{\varepsilon \mathbf 1_U} \left(\sup_{s \leq t, y \in F} u_{s, y} > \gamma\right) 
	< \infty.
\]
\end{lemma}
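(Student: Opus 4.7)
The plan is to split the mild form \eqref{:II.35} as $u_{s,y} = u^{\mathrm d}_{s,y} + V_{s,y}$, where $u^{\mathrm d}_{s,y} := \ve\int G_{s,y-z}\mathbf 1_U(z)\,\mathrm dz$ is the deterministic heat-solution part and $V_{s,y} := \iint_0^s G_{s-r,y-z}\sqrt{u_{r,z}(1-u_{r,z})}\,W(\mathrm dr\,\mathrm dz)$ the stochastic convolution. Since $u^{\mathrm d}_{s,y} \le \ve < \gamma/2$, the event $\{\sup_{s\le t,\,y\in F} u_{s,y} > \gamma\}$ is contained in $\{\sup V > \gamma/2\}$, and it suffices to bound this latter probability by $O(\ve)$ uniformly in $\ve \in (0,\gamma/2)$.

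The heart of the argument is a linear-in-$\ve$ moment estimate for $V_{s,y}$ that also decays in $d(y,U)$. Writing $X(s,y) := \iint_0^s G_{s-r,y-z}^2\,u_{r,z}(1-u_{r,z})\,\mathrm dr\,\mathrm dz$, I bound it trivially via $u(1-u)\le 1/4$ (yielding $X \le C\sqrt{t}$) and in expectation via $u(1-u) \le u$; using $\mathbf E_{\ve\mathbf 1_U}[u_{r,z}] = \ve(G_r * \mathbf 1_U)(z)$, the inequality $G_{s-r,y-z}^2 \le (2\pi(s-r))^{-1/2}G_{s-r,y-z}$, and the semigroup identity $G_{s-r}*G_r = G_s$, the latter yields $\mathbf E[X(s,y)] \le C\sqrt{t}\,\ve\,(G_s * \mathbf 1_U)(y)$. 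Interpolating via $X^{p/2} \le (C\sqrt{t})^{p/2-1} X$ and invoking BDG gives
\[
\mathbf E_{\ve\mathbf 1_U}[|V_{s,y}|^p] \le C_p(t)\,\ve\,(G_s*\mathbf 1_U)(y), \qquad p \ge 2.
\]
The same scheme, applied to the kernel difference $K(r,z) := G_{s-r,y-z}\mathbf 1_{r\le s}-G_{s'-r,y'-z}\mathbf 1_{r\le s'}$ using the standard heat-kernel estimate $\iint_0^{s\vee s'}|K|^2\,\mathrm dr\,\mathrm dz \le C(t)(|s-s'|^{1/2}+|y-y'|)$ and localizing the mass of $K^2$ to the bulk near $y,y'$ to retain the factor $(G_t*\mathbf 1_U)(y)\vee(G_t*\mathbf 1_U)(y')$, yields
\[
\mathbf E_{\ve\mathbf 1_U}[|V_{s,y}-V_{s',y'}|^p] \le C_p(t)\,\ve\,\bigl[(G_t*\mathbf 1_U)(y)\vee(G_t*\mathbf 1_U)(y')\bigr]\bigl(|s-s'|^{1/2}+|y-y'|\bigr)^{p/2}.
\]

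Taking $p$ large enough that the Kolmogorov--Chentsov criterion applies in the parabolic metric $\rho((s,y),(s',y'))=|s-s'|^{1/2}+|y-y'|$, I deduce for any closed rectangle $R\subset [0,t]\times \mathbb R$ that $\mathbf E_{\ve\mathbf 1_U}[\sup_{(s,y)\in R} V_{s,y}^p] \le C(p,t,R)\,\ve\,\sup_{y\in R}(G_t*\mathbf 1_U)(y)$, and Markov then gives $\mathbf P_{\ve\mathbf 1_U}(\sup_R V > \gamma/2) \le C(p,t,R,\gamma)\,\ve\,\sup_{y\in R}(G_t*\mathbf 1_U)(y)$. For bounded $F$ this already proves the lemma. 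If $F$ is unbounded, the hypothesis that $U\cap F$ is bounded forces $d(y,U)\to\infty$ as $|y|\to\infty$ through $F$, so I pick $M$ with $F\setminus[-M,M]\subset\{d(\cdot,U)\ge 1\}$ and partition $F = (F\cap [-M,M]) \cup \bigsqcup_{n\ge M} F_n$, with $F_n := F\cap([-(n+1),-n]\cup[n,n+1])$. On each shell, Gaussian tails give $\sup_{y\in F_n}(G_t*\mathbf 1_U)(y)\le C e^{-c n^2}$, so the shell probability is $\le C\ve e^{-c n^2}$, and the sum over $n\ge M$ converges, yielding the desired finite constant $C(U,F,t,\gamma)$.

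The main obstacle I anticipate is propagating the Gaussian factor $(G_t*\mathbf 1_U)(y)$ through the \emph{increment} bound, not merely through the single-point moment, since without this the Kolmogorov chaining on each far-away shell does not give decay in $n$ and the tail sum diverges. The correct remedy is to partition the domain of $\iint K^2(G_r*\mathbf 1_U)(z)\,\mathrm dz\,\mathrm dr$ into the bulk of $K^2$ (within $\sqrt t$ of $y$ or $y'$, where $(G_r*\mathbf 1_U)$ is Gaussian-small because the bulk lies far from $U$) and the Gaussian tails of $K^2$ outside this bulk, which contribute another exponentially small term. The remaining ingredients---BDG, heat-kernel increment estimates, Kolmogorov--Chentsov---are standard once this decomposition is in place.
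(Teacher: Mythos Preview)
Your architecture matches the paper's: split $u=u^{\mathrm d}+V$, use $u^{\mathrm d}\le\ve<\gamma/2$ to reduce to $\{\sup|V|>\gamma/2\}$, then get linear-in-$\ve$ moment bounds on $V$ and feed them through Kolmogorov--Chentsov. The two proofs diverge in how they extract the $\ve$ factor and how they handle unbounded $F$.

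For the $\ve$ factor, the paper uses a Jensen trick: writing the quadratic variation as $\mathcal K\cdot\frac{1}{\mathcal K}\iint K^2 u(1-u)$ and applying Jensen to the probability measure $K^2/\mathcal K$ pushes the exponent onto $u$, and then $\mathbf E[u_{r,z}^p]\le\mathbf E[u_{r,z}]\le\ve$ (since $u\in[0,1]$) gives linearity in $\ve$ directly. Your interpolation $X^{p/2}\le(\sup X)^{p/2-1}X$ with $\sup X\le C\mathcal K$ achieves the same end and is equally valid; the Jensen route is just a touch cleaner.

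For unbounded $F$, the paper does not do the shell decomposition at all: Cases (1)--(3) simply invoke \cite{MR1339735}*{Lemma 3.1} as a black box, which already delivers a bound of the form $C(\gamma,t)\,\ve\int_U\!\int_F G_{t,x-z}\,\mathrm dz\,\mathrm dx$, and the direct Kolmogorov argument is reserved for bounded $F$ (Case (4)), where no spatial decay is needed. Your route is more self-contained---you are in effect reproving Tribe's lemma---and the obstacle you flag (keeping a Gaussian weight through the increment bound so the shell sum converges) is real. Your proposed remedy, splitting $\iint K^2(G_r*\mathbf 1_U)$ into the bulk of $K^2$ near $y,y'$ (where $G_r*\mathbf 1_U$ is Gaussian-small because the shell is far from $U$) and the Gaussian tails of $K^2$ away from $y,y'$, does work; both pieces are $O(e^{-cn^2/t})$ after the $r$-integration, so the shell probabilities are summable. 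The trade-off is that the paper's proof is shorter by outsourcing this step, while yours would be independent of \cite{MR1339735}.
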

	We use the above lemmas to prove the next proposition.
	\begin{proposition}\label{:IF_fixed_t}
	Suppose that $F$ is a closed interval containing $\cup_{i\in \mathbb N} (x_i-1,x_i+1)$.
	Suppose that $U$ is an open interval such that $U\cap F$ is bounded.
	Then for any $t>0$ and $\gamma \in (0,1)$,
\begin{equation}\label{:IF_fixed_t1}
\mathbb E[Z_t(U)] \leq  \frac{1}{1 - \gamma}\left[\int_Uv^{(\Lambda, \mu)}_{t, y} \mathrm d y + \frac{1}{2}\mathcal V^{(\Lambda,\mu,F)}_t \right] + 2\Cr{:IFP.31}(U, F, t, \gamma)<\infty.\end{equation}
\end{proposition}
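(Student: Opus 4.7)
The plan is to derive the first-moment bound \eqref{:IF_fixed_t1} as the limit $\ve \downarrow 0$ of a Laplace-transform inequality obtained by chaining the three preceding lemmas. Indeed, the right-hand side of \eqref{:IF_fixed_t1} is exactly what one gets by linearizing the lower bound of Lemma \ref{:IFP_25} in $\ve$ at $\ve = 0$, with the constant $\Cr{:IFP.31}(U, F, t, \gamma)$ absorbing the probability error term.

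Fix $\gamma \in (0,1)$ and restrict to $\ve \in (0, \gamma/2)$ so that Lemmas \ref{:IFP_1}, \ref{:IFP_25}, and \ref{:IFP_3} all apply. Substituting the lower bound \eqref{:IFP_251} into \eqref{:IFP_11} gives a lower bound of the form $\exp(-c_1 \ve) - c_2 \ve - 2 \mathbf P_{\ve \mathbf 1_U}(\sup_{s \leq t, y \in F} u_{s,y} > \gamma)$ on $\mathbb E[(1-\ve)^{Z_t(U)}]$, where $c_1 = (1-\gamma)^{-1} \int_U v^{(\Lambda, \mu)}_{t,y}\mathrm dy$ and $c_2 = (2(1-\gamma))^{-1} \mathcal V^{(\Lambda, \mu, F)}_t$ are finite by Lemma \ref{:IFP_2}. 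Applying the elementary inequality $1 - e^{-x} \leq x$ to the exponential term, rearranging, dividing by $\ve$, and bounding the residual probability by $\ve \cdot \Cr{:IFP.31}(U, F, t, \gamma)$ via Lemma \ref{:IFP_3} yields the uniform estimate
\[
\tfrac{1}{\ve}\, \mathbb E\bigl[1 - (1-\ve)^{Z_t(U)}\bigr] \;\leq\; c_1 + c_2 + 2 \Cr{:IFP.31}(U, F, t, \gamma), \qquad \ve \in (0, \gamma/2),
\]
whose right-hand side is exactly that of \eqref{:IF_fixed_t1}.

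It remains only to pass the left-hand side to $\mathbb E[Z_t(U)]$, and this is the main subtlety: at this stage of the paper $\mathbb E[Z_t(U)]$ is not known to be finite (indeed, its finiteness is part of what the proposition asserts), so a naive Taylor expansion of $(1-\ve)^{Z_t(U)}$ is unavailable. Instead I would use the pointwise bound $(1-\ve)^z \leq e^{-\ve z}$ for $z \in [0, \infty]$ (with the convention $e^{-\ve \cdot \infty} = 0$), which gives $\ve^{-1} \mathbb E[1 - (1-\ve)^{Z_t(U)}] \geq \ve^{-1} \mathbb E[1 - e^{-\ve Z_t(U)}]$, and then exploit the identity $(1 - e^{-\ve z})/\ve = \int_0^z e^{-\ve s}\mathrm ds$, whose right-hand side increases monotonically to $z$ as $\ve \downarrow 0$. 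Monotone convergence then yields $\ve^{-1} \mathbb E[1 - e^{-\ve Z_t(U)}] \uparrow \mathbb E[Z_t(U)]$, so letting $\ve \downarrow 0$ in the displayed inequality simultaneously proves \eqref{:IF_fixed_t1} and the finiteness of $\mathbb E[Z_t(U)]$.
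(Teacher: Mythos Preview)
Your proof is correct and follows essentially the same approach as the paper: chain Lemmas \ref{:IFP_1}, \ref{:IFP_25}, and \ref{:IFP_3} to obtain a lower bound on $\mathbb E[(1-\ve)^{Z_t(U)}]$, then recover $\mathbb E[Z_t(U)]$ as $\lim_{\ve\downarrow 0}\ve^{-1}(1-\mathbb E[(1-\ve)^{Z_t(U)}])$. The only difference is that the paper asserts this limit identity directly (it holds by monotone convergence, since $\ve^{-1}(1-(1-\ve)^z)=\sum_{k=0}^{z-1}(1-\ve)^k$ is already monotone decreasing in $\ve$ for each nonnegative integer $z$), whereas you take a small detour through the bound $(1-\ve)^z\leq e^{-\ve z}$; your route is slightly less direct but equally valid, and your explicit attention to the fact that finiteness of $\mathbb E[Z_t(U)]$ is not known a priori is a welcome point of rigor.
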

\begin{proof}
	Let $\varepsilon \in (0, \gamma/2)$ be arbitrary and $(u_{t,x})_{t\geq 0,x\in \mathbb R}$ be a solution to the Wright-Fisher SPDE \eqref{:II.3} with initial condition $u_0 = \varepsilon \mathbf 1_U$.
	From the condition of the proposition, it is easy to see that $U \cap \{x_i:i\in \mathbb N\}$ is bounded.
	By Lemmas \ref{:IFP_1}, \ref{:IFP_25} and \ref{:IFP_3}, we have
\begin{equation}
	\mathbb E\left[(1- \ve)^{Z_t(U)}\right] 
	\geq \exp\left(-\frac{\ve}{1- \gamma}\int_U v^{(\Lambda, \mu)}_{t,y} \mathrm d y\right) - \frac{\ve}{2(1 - \gamma)}\mathcal V^{(\Lambda, \mu,F)}_t - 2\ve \Cr{:IFP.31}(U, F, t, \gamma).
\end{equation}
	We extract the first moment of $Z_t(U)$ by taking the derivative of its moment generating function, that is,
\begin{align}
	&\mathbb E[Z_t(U)] 
	= \lim_{\ve \to 0^+}\frac{1 - \ex\left((1- \ve)^{Z_t(U)}\right)}{\ve}
\\
	&\leq \frac{1}{1 - \gamma}\left[\int_Uv^{(\Lambda, \mu)}_{t, y} \mathrm d y + \frac{1}{2}\mathcal V^{(\Lambda,\mu,F)}_t \right] + 2\Cr{:IFP.31}(U, F, t, \gamma),
\end{align}
	which is finite for every $t > 0$ by Lemmas \ref{:IFP_2} and \ref{:IFP_3}.
\end{proof}

	\begin{remark} \label{thm:Finitem}
	Let $U$ be an open interval such that $U \cap \supp(\Lambda,\mu)$ is bounded.
	Take $F$ to be the smallest closed interval containing $\cup_{i\in \mathbb N}(x_i-1,x_i+1)$.
	Since $\supp(\Lambda,\mu)=\mathrm{cl}(\{x_i:i\in \mathbb N\})$, where $\mathrm{cl}(A)$ denotes the closure of a set $A$,
	it is clear that $U\cap F$ is bounded.
Now the above proposition implies that for fixed $t > 0$, $\mathbb E[Z_t(U)] < \infty$ and therefore $\mathbb P(Z_t(U)< \infty) = 1$. 
\end{remark}

\begin{note} 
\begin{lemma} \label{:BS}
	Let $\{(\beta^{(i)}_t)_{t\geq 0}:i\in \mathbb N\}$ be a sequence of independent standard Brownian motions.
	Let $(X_i)_{i\in \mathbb N}$ be a sequence of random variables independent of the Brownian motions $\{\beta^{(i)}:i\in \mathbb N\}$, such that
\begin{equation}\label{:BS.1}
	\sup_{a\in \mathbb R}\mathbb E\left[\sum_{i\in \mathbb N}\mathbf 1_{\{X_i\in [a,a+1)\}}\right] < \infty.
\end{equation}
	Then, for every $T\geq 0$, 
\[
	\sup_{a\in \mathbb R}\mathbb E\left[\sum_{i\in \mathbb N} \mathbf 1_{\{\exists t\in [0,T]: X_i+\beta^{(i)}_t \in [a,a+1)\}}\right] < \infty.
\]
\end{lemma}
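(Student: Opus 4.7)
The idea is to decompose the real line into unit intervals according to the starting positions of the particles, and then use a Gaussian-tail estimate for the maximum of a Brownian motion on $[0,T]$ to control, uniformly in the target location $a$, the contribution coming from particles that start far from $a$.

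Write $M := \sup_{a\in\mathbb R}\mathbb E\bigl[\sum_{i\in\mathbb N}\mathbf 1_{\{X_i\in[a,a+1)\}}\bigr]$, which is finite by \eqref{:BS.1}. Fix $a\in\mathbb R$ and, for each $k\in\mathbb Z$, decompose according to which unit interval $[a+k,a+k+1)$ contains $X_i$:
\[
\mathbb E\Bigl[\sum_{i\in\mathbb N}\mathbf 1_{\{\exists t\in[0,T]:\,X_i+\beta^{(i)}_t\in[a,a+1)\}}\Bigr]
=\sum_{k\in\mathbb Z}\sum_{i\in\mathbb N}\mathbb E\bigl[\mathbf 1_{\{X_i\in[a+k,a+k+1)\}}\mathbf 1_{\{\exists t\in[0,T]:\,\beta^{(i)}_t\in[a-X_i,a+1-X_i)\}}\bigr].
\]
By independence of $(X_j)_{j\in\mathbb N}$ from $(\beta^{(i)})_{i\in\mathbb N}$, conditioning on $X_i$ and using that on $\{X_i\in[a+k,a+k+1)\}$ the target interval $[a-X_i,a+1-X_i)$ is contained in $[-k-1,-k+1]$, the inner probability is bounded by
\[
p_k:=\mathbb P\Bigl(\max_{t\in[0,T]}|\beta_t|\geq(|k|-1)_+\Bigr),
\]
where $\beta$ is a standard Brownian motion. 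So each term satisfies $\mathbb E[\mathbf 1_{\{X_i\in[a+k,a+k+1)\}}\mathbf 1_{\{\cdots\}}]\leq p_k\cdot\mathbb P(X_i\in[a+k,a+k+1))$, and summing in $i$ and applying \eqref{:BS.1}:
\[
\sum_{i\in\mathbb N}\mathbb P(X_i\in[a+k,a+k+1))\leq M.
\]

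Next I would use the reflection principle to estimate $p_k$: for $r\geq 0$,
\[
\mathbb P\Bigl(\max_{t\in[0,T]}|\beta_t|\geq r\Bigr)\leq 4\,\mathbb P(\beta_T\geq r)\leq 4\,e^{-r^2/(2T)},
\]
so that $p_k\leq 4 e^{-(|k|-1)_+^2/(2T)}$ and $\sum_{k\in\mathbb Z}p_k<\infty$. Combining everything yields
\[
\mathbb E\Bigl[\sum_{i\in\mathbb N}\mathbf 1_{\{\exists t\in[0,T]:\,X_i+\beta^{(i)}_t\in[a,a+1)\}}\Bigr]\leq M\sum_{k\in\mathbb Z}p_k<\infty,
\]
with a bound independent of $a$, which is the desired conclusion.

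There is no real obstacle here; the main thing to be careful about is checking the geometric bookkeeping for small $|k|\in\{0,1\}$ (where one just uses the trivial bound $p_k\leq 1$) and confirming the independence used to factor $\mathbb P(X_i\in\cdot)$ from the Brownian event through Fubini.
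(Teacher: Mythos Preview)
Your proof is correct and follows essentially the same approach as the paper: both decompose the starting positions $X_i$ into unit intervals indexed by $k\in\mathbb Z$ (the paper uses $[a-k,a-k+1)$, you use $[a+k,a+k+1)$, which is just a reindexing), bound the Brownian hitting probability by the tail of $\max_{t\le T}|\beta_t|$ at level roughly $|k|-1$, and then sum the resulting Gaussian tails against the uniform bound $M$ from \eqref{:BS.1}. Your explicit use of the reflection principle and the handling of small $|k|$ via $(|k|-1)_+$ are exactly the right cleanups.
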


\begin{proof}[Proof of Theorem \ref{:I}\ref{:IF}]
	Suppose that $\mathbf X_0$ is bounded.
	In this case, from Proposition \ref{:IF_fixed_t} we have $\mathbb P(Z_t(\mathbb R)< \infty) = 1$ for every $t>0$. 
	Noticing that almost surely $t\mapsto Z_t(\mathbb R)$ is a non-increasing function on $(0,\infty)$ due to the coalescing mechanism, therefore $\mathbb P(Z_t(\mathbb R)<\infty, \forall t>0) = 1$.
	Also noticing that $Z_t(U)\leq Z_t(\mathbb R)$ for every $t>0$ almost surely, we have the desired result in this case.
	
	Now  we consider the case when $\mathbf X_0$ is unbounded.
	In this case, it is suffice to show that
\begin{equation} \label{:IF.0}
	\mathbb E \left[\sup_{t\in [\delta, T]} Z_t(U)\right]<\infty 
\end{equation}
	for every $0<\delta < T< \infty$.
	Let us fix arbitrary $0<\delta < T<\infty$.
	Notice that $t\mapsto Z_t(U)$ is not a non-increasing process in general, because particles may enter $U$ through ts boundary. 
	But we can still control the population in the region $U$ by considering how many particles pass though its boundary, thanks to the fact that there is no creation of new particles, and all particles move continuously.
	More precisely, we have the following inequality
\begin{equation} \label{:IF.1}
	\sup_{t\in [\delta, T]} Z_t(U) 
	\leq Z_\delta(U) + \sum_{i\in I_\delta} \mathbf 1_{\{\exists t\in [\delta, T]\cap [0,\zeta_i): B^{(i)}_t \in \partial U\}}, \quad \text{a.s.} 
\end{equation}
	Here the second term on the right hand side of \eqref{:IF.1} is the number of particles that ever crossed the boundary of $U$ in the time interval $[\delta, T]$.
	Since $U$ is an interval, its boundary set $\partial U := \overline U \setminus U$ consists of at most two points. 
	In particular, there exists $-\infty < a_0< a_1< \infty$ such that $\partial U \subset \{a_0 + 1/2,a_1+1/2\}$.
	Therefore we have	
\begin{equation}\label{:IF.21}
	\sup_{t\in [\delta, T]} Z_t(U) 
	\leq Z_\delta(U) + \sum_{k=0,1}\sum_{i\in I_\delta} \mathbf 1_{\{\exists t\in [\delta, T]: B^{(i)}_t \in [a_k,a_k+1)\}}, \quad \text{a.s.} 
\end{equation}

	From Theorem \ref{:I} \ref{:II}, we know that $I_\delta$, the set of labels of the living particles at time $\delta$, has (countable) infinitely many elements. 
	We claim that \eqref{:BS.1} holds with the random sequence $(X_i)_{i\in \mathbb N}$ being replaced by $(B^{(i)}_\delta)_{i\in I_\delta}$. 
	In fact, fixing arbitrary $0<2\varepsilon < \gamma < 1$ and $a\in \mathbb R$, by Proposition \ref{:IF_fixed_t}, 
	\[\mathbb E[Z_\delta((a-1,a+1))] \leq  \frac{1}{1 - \gamma}\left[\int_{a-1}^{a+1}v^{(\Lambda, \mu)}_{\delta, y} \mathrm d y + \frac{1}{2}\mathcal V^{(\Lambda,\mu,\mathbb R)}_\delta \right] + 2\Cr{:IFP.31}((a-1,a+1), \mathbb R, \delta, \gamma).\]
	Notice that, by \eqref{:II.522} and \eqref{:II.523},  
\[
	\int_{a-1}^{a+1} v^{( \Lambda,\mu)}_{\delta,y}\mathrm dy 
	\leq \int_{a-1}^{a+1} v^{(\mathbb R, \mathbf 0)}_{\delta,y}\mathrm dy=\frac{4}{\delta},
\]
	and that $\mathcal V^{(\Lambda, \mu, \mathbb R)}_\delta= 0$. 
	It is also clear from the definition of $\Cr{:IFP.31}$ in Lemma \ref{:IFP_3} that
\[
	\Cr{:IFP.31}((a-1,a+1), \mathbb R, \delta, \gamma) = \Cr{:IFP.31}((-1,1), \mathbb R, \delta, \gamma).
\]
	Therefore we have
\[
	\mathbb E[Z_\delta((a-1,a+1))] 
	\leq \frac{4}{(1-\gamma)\delta} + 2\Cr{:IFP.31}((-1,1), \mathbb R, \delta,\gamma) .
\]
	Since $a\in \mathbb R$ is arbitrary, we have
\[
	\sup_{a\in \mathbb R} \mathbb E\Big[\sum_{i\in I_\delta} \mathbf 1_{\{B^{(i)}_\delta\in [a,a+1)\}}\Big] 
	=\sup_{a\in \mathbb R} \mathbb E[Z_\delta([a,a+1))] < \infty
\]
as claimed.

	Define $\beta^{(i)}_t := B_{\delta+t}^{(i)} -B_\delta^{(i)}$ for every $i\in \mathbb N$ and $t\geq 0$. 
	From the Markov property of the Brownian motions, we have that $\{(\beta^{(i)}_t)_{t\geq 0}:i\in \mathbb N\}$ is a family of independent Brownian motions which are independent of $(B_\delta^{(i)})_{i\in I_\delta}$. 
	Now we can apply Lemma \ref{:BS}, and obtain that
\[
	\sup_{a\in \mathbb R} \mathbb E\left[\sum_{i\in I_\delta} \mathbf 1_{\{\exists t\in [\delta,T]: B_t^{(i)} \in [a,a+1)\}} \right]
	< \infty.
\]
	This says that the second term on the right hand side of \eqref{:IF.21} has finite expectation. 
	We already know from Proposition \ref{:IF_fixed_t} that the expectation of the first term on the right hand side of \eqref{:IF.21} is finite (see Remark \ref{thm:Finitem}).
	Therefore \eqref{:IF.0} holds as desired. 
\end{proof}
\end{note}

	For any open subset $G$ of $[0,\infty) \times \mathbb R$, let us define random variable
\[
	Z(G) := \sum_{i\in I_0} \mathbf 1_{\{\exists t>0 \text{~s.t.~} (t,X^{(i)}_t) \in G\}}
\]
	which is the number of particles whose time-space trajectory intersects with  $G$.
	
\begin{proposition} \label{prop:finitecrossing}
	Let $U$ be an open interval such that $U \cap \supp(\Lambda,\mu)$ is bounded.
	Let $0< \delta < T < \infty$ be arbitrary.
	Then $
	\mathbb E[Z((\delta,T)\times U )]< \infty.$
\end{proposition}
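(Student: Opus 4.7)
The plan is to adapt the strategy for Theorem~\ref{:I}\ref{:IF}: bound $Z((\delta,T)\times U)$ by the count of particles in $U$ at time $\delta$ together with the count of particles whose post-$\delta$ trajectory visits the two-point boundary $\partial U$. When $\# I_0 < \infty$ the bound is trivial, so I assume $I_0 = \mathbb N$. Since no new particles are born, every contributor to $Z((\delta,T)\times U)$ lies in $I_\delta$. Ignoring the $\mathbb P$-null event $\{B^{(i)}_\delta \in \partial U\}$, either $B^{(i)}_\delta\in U$ (so $i$ is counted in $Z_\delta(U)$) or $B^{(i)}_\delta \notin \overline{U}$, in which case by path continuity the trajectory must cross $\partial U$ in order to enter $U$. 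As $U$ is an interval, $\partial U\subset\{a_0,a_1\}$, so almost surely
\[
	Z((\delta,T)\times U) \leq Z_\delta(U) + \sum_{k=0,1}\sum_{i\in I_\delta}\mathbf 1_{\{\exists\,t\in[\delta,T]:\,B^{(i)}_t = a_k\}}.
\]

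For the first term, since $U\cap\supp(\Lambda,\mu) = U\cap\mathrm{cl}(\{x_i\})$ is bounded, one can choose a closed interval $F\supset\bigcup_i(x_i-1,x_i+1)$ with $U\cap F$ bounded (take $F$ compact, a half-line, or $\mathbb R$, depending on whether $\{x_i\}$ is bounded or unbounded in each direction), so Proposition~\ref{:IF_fixed_t} yields $\mathbb E[Z_\delta(U)] < \infty$. For the boundary terms I set $\beta^{(i)}_t := B^{(i)}_{\delta+t}-B^{(i)}_\delta$; by the Markov property of Brownian motion, $\{\beta^{(i)}\}_{i\in \mathbb N}$ is an i.i.d.\ family of standard Brownian motions, independent of $(I_\delta,(B^{(j)}_\delta)_j)$. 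Using $\mathbf 1_{\{\exists\,t:\,B^{(i)}_t = a_k\}} \leq \mathbf 1_{\{\exists\,t:\,B^{(i)}_t\in[a_k,a_k+1)\}}$, Lemma~\ref{:BS} reduces the claim to the uniform estimate
\[
	\sup_{a\in\mathbb R}\mathbb E[Z_\delta([a,a+1))] < \infty.
\]

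For this uniform bound I would apply Proposition~\ref{:IF_fixed_t} with $U' = (a-1,a+1)$ and $F = \mathbb R$: both hypotheses hold since $\mathbb R$ is a closed interval containing every $(x_i-1,x_i+1)$ and $U'\cap\mathbb R = U'$ is bounded. The three terms on the right side of \eqref{:IF_fixed_t1} are then uniform in $a$: from \eqref{:II.522}--\eqref{:II.523}, $\int_{U'} v^{(\Lambda,\mu)}_{\delta,y}\,\mathrm dy \leq 4/\delta$; the quantity $\mathcal V^{(\Lambda,\mu,\mathbb R)}_\delta$ is an integral over the empty set and hence zero; and translation invariance of the Wright-Fisher SPDE on $\mathbb R$ yields $\Cr{:IFP.31}((a-1,a+1),\mathbb R,\delta,\gamma) = \Cr{:IFP.31}((-1,1),\mathbb R,\delta,\gamma) < \infty$. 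Since $[a,a+1)\subset(a-1,a+1)$, this delivers the desired uniform bound on $\mathbb E[Z_\delta([a,a+1))]$.

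The main obstacle is the non-monotonicity of $t\mapsto Z_t(U)$: particles may cross $\partial U$ and enter or leave $U$, so the single-time estimate of Proposition~\ref{:IF_fixed_t} does not bound $Z((\delta,T)\times U)$ directly. Isolating the boundary-crossing contribution and controlling it via Lemma~\ref{:BS} circumvents this by converting the time-interval problem into an occupancy estimate for independent Brownian motions. The uniform-in-$a$ density input, in turn, rests on the universal majorant $v^{(\Lambda,\mu)}\leq v^{(\mathbb R,\mathbf 0)} = 2/\delta$ from \eqref{:II.522}--\eqref{:II.523}, which makes all the ingredients of Proposition~\ref{:IF_fixed_t} translation-invariant when $F = \mathbb R$.
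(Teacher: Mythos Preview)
Your approach is essentially the same as the paper's: both decompose $Z((\delta,T)\times U)$ into $Z_\delta(U)$ plus a term counting particles that enter $U$ after time $\delta$, both establish the uniform bound $\sup_j \mathbb E[Z_\delta((j-1,j+1))]<\infty$ via Proposition~\ref{:IF_fixed_t} with $F=\mathbb R$ (using $v^{(\Lambda,\mu)}\le 2/\delta$, $\mathcal V^{(\Lambda,\mu,\mathbb R)}_\delta=0$, and translation invariance of $\Cr{:IFP.31}$), and both control the entering term through independence of the post-$\delta$ Brownian increments $\beta^{(i)}$ from $(I_\delta,(B^{(j)}_\delta)_j)$ together with Gaussian tail bounds.

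The only difference is packaging. You route the crossing estimate through Lemma~\ref{:BS}; the paper instead covers $\mathbb R$ by $L_j=(j-1,j+1)$, writes
\[
Z((\delta,T)\times U)\le Z_\delta(U)+\sum_{j:L_j\not\subset U}\sum_{i\in I_\delta:\,B^{(i)}_\delta\in L_j}\mathbf 1_{\{M^{(i)}_{T-\delta}\ge\dist(L_j,U)\}},
\]
and applies Wald's lemma directly to each inner sum. Note that Lemma~\ref{:BS} sits inside a \texttt{note} environment that the paper suppresses at compile time, so in the published version that lemma is not actually available; the paper's inline Wald argument is precisely its replacement, and your proof would go through verbatim once you unfold Lemma~\ref{:BS} in the same way.
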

\begin{proof}
	Let us consider covering the space $\mathbb R$ by open intervals $L_j := (j-1,j+1), j\in \mathbb N$ and show that $\mathbb E[Z_\delta(L_j)]$ is uniformly bounded in $j$. 
	In fact, fixing arbitrary $0< \gamma < 1$ and $j\in \mathbb N$, by Proposition \ref{:IF_fixed_t}, we have
	\[\mathbb E[Z_\delta(L_j)] \leq  \frac{1}{1 - \gamma}\left[\int_{j-1}^{j+1}v^{(\Lambda, \mu)}_{\delta, y} \mathrm d y + \frac{1}{2}\mathcal V^{(\Lambda,\mu,\mathbb R)}_\delta \right] + 2\Cr{:IFP.31}(L_j, \mathbb R, \delta, \gamma).\]
	Notice that, by \eqref{:II.522} and \eqref{:II.523},  
	\[
	\int_{j-1}^{j+1} v^{( \Lambda,\mu)}_{\delta,y}\mathrm dy 
	\leq \int_{j-1}^{j+1} v^{(\mathbb R, \mathbf 0)}_{\delta,y}\mathrm dy=\frac{4}{\delta},
	\]
	and that $\mathcal V^{(\Lambda, \mu, \mathbb R)}_\delta= 0$. 
	It is also clear from the definition of $\Cr{:IFP.31}$ in Lemma \ref{:IFP_3} that
	\[
	\Cr{:IFP.31}(L_j, \mathbb R, \delta, \gamma) = \Cr{:IFP.31}(L_0, \mathbb R, \delta, \gamma).
	\]
	Therefore we have
	\[
	\mathbb E[Z_\delta(L_j)] 
	\leq \frac{4}{(1-\gamma)\delta} + 2\Cr{:IFP.31}(L_0, \mathbb R, \delta,\gamma) 
	=: \C\label{:Cgamma}(\delta,\gamma).
	\]
	
	Define 
	$\beta_t^{(i)} := B_{\delta + t}^{(i)} - B_{\delta}^{(i)}$ for every $i \in \N, t \geq 0.$ From the Markov property of Brownian motions, $\{(\beta_t^{(i)})_{t \geq 0} : i \in \N\}$ is a family of independent Brownian motions which are independent of  $(B_\delta^{(i)})_{i\in \mathbb N}$. 
	Therefore, 
	\[N_j := Z_{\delta}(L_j) = \#\{i\in \mathbb N :  \zeta_i > \delta, B_\delta^{(i)} \in L_j\}, \quad j\in \mathbb N,\] 
	is independent from 
	\[
	M_{T- \delta}^{(i)} := \sup_{t \in [0,T- \delta]}|\beta_t^{(i)}|, \quad i\in \mathbb N.
	\] 
	One can verify the following inequality
	\begin{align}
		& Z((\delta,T)\times U) \leq Z_\delta(U) + \sum_{j \in \N: L_j  \not\subset U} \sum_{i\in \mathbb N:\delta > \zeta_i, B_\delta^{(i)} \in L_j} \mathbf 1_{\{M_{T - \delta}^{(i)} \geq \dist(L_j, U) \}}.
	\end{align}
	Here $\dist(L_j, U)$ denotes the distance between sets $L_j$ and $U$, which is also the distance a Brownian particle at least needs to travel from $L_j$ to $U$.
	Taking expectations, we have
	\begin{align}
		&\mathbb E\left[Z((\delta,T)\times U) \right] 
		\leq \mathbb E[Z_\delta(U)] + \sum_{j\in \N:  L_j  \not\subset U} \mathbb E\left[ \sum_{i\in \mathbb N:\delta > \zeta_i, B_\delta^{(i)} \in L_j} \mathbf 1_{\{M_{T - \delta}^{(i)} \geq \dist(L_j, U) \}}\right].
	\end{align}
	It follows from Proposition \ref{:IF_fixed_t} and Remark \ref{thm:Finitem} that  the first expectation on the right hand side is finite. 
	Now from the independence of  $(N_j)_{j \in \N}$ with the collection of the i.i.d. random variables $(M_{T - \delta}^{(i)})_{i \in \N}$, we can apply Wald's  lemma to calculate the second expectation on the right hand side. 
	Thus
	\begin{align*}
		&\mathbb E\left[Z((\delta,T)\times U)\right] 
		\leq \mathbb E[Z_\delta(U)] + \sum_{j \in \N: L_j\not\subset U} \mathbb E[N_j]\cdot \mathbb P\left(M_{T - \delta}^{(0)} \geq \dist(L_j,U) \right)\\
		&\leq \mathbb E[Z_\delta(U)] + \Cr{:Cgamma}(\delta, \gamma) \sum_{j \in \N: L_j\not\subset U} \Cr{:Ctemp}(T-\delta) \exp\left(-\frac{\dist(L_j,U)^2}{2(T-\delta)}\right)< \infty
	\end{align*}
	as desired.
	Here, $\C\label{:Ctemp}(T-\delta)$ is a finite constant only depending on $T-\delta$.
\end{proof}

\begin{proof}[Proof of Theorem \ref{:I}\ref{:IF}]
Clearly  $Z_t(U) \leq Z((\delta, T)\times U)$,  for every $t\in (\delta, T)$, almost surely. Thus, it
	suffices to show that $\mathbb P(Z((\delta, T)\times U) < \infty) = 1$
for  arbitrarily fixed $0<\delta < T$. 
Now the desired result follows from Proposition \ref{prop:finitecrossing}.
\end{proof}
	Let us now give the proofs of Lemmas \ref{:IFP_1}--\ref{:IFP_3}.
\begin{proof}[Proof of Lemma \ref{:IFP_1}]
	Noticing that for every $n\in \mathbb N$,  
	$\mathbf X^{(n)} := \{X^{(i)}_t: t\geq 0, i = 1,\dots, n\}$ is a coalescing Brownian particle system with initial configuration $(x_i)_{i=1}^n$. 
	Denote by $I_t^{(n)}$ the random set of the indexes of the particles alive at a given time $t\geq 0$ in the particle system $\mathbf X^{(n)}$.
	From Shiga's duality \cite{MR948717}*{Theorem 5.2},
\[
	\ex \left[\prod_{i \in I_t^{(n)}}\left(1 - u_{0, X_t^{(i)}}\right)\right] 
	= \mathbf E_{\varepsilon \mathbf 1_U} \left[\prod_{i = 1}^n\left(1 - u_{t, x_i}\right)\right], \quad n\in \mathbb N.
\]
	Taking $n\uparrow\infty$, we get by monotone convergence theorem that
\[
	\mathbb E\left[\prod_{i\in I_t}\left(1-u_{0,X_t^{(i)}}\right)\right] 
	= \mathbf E_{\varepsilon \mathbf 1_U}\left[\prod_{i=1}^\infty (1-u_{t,x_i})\right].
\]
	Since $u_{0,x} = \varepsilon\mathbf 1_U(x)$, we can rewrite the above as  
\begin{align}
	&\ex \left[(1 - \ve)^{Z_t(U)}\right]= \mathbf E_{\varepsilon \mathbf 1_U}\left[\exp\left(\sum_{i = 1}^\infty \log(1 - u_{t, x_i})\right)\right]. 
\end{align}

	On one hand, from the fact that $\log (1-z) \leq -z$ for every $z\in [0,1]$, we have \eqref{:IFP_12} holds as desired.  
	  On the other hand, from  the fact that $z \mapsto \theta(z) =-\log(1-z)/z$ is a positive increasing function on $(0, 1)$, we can verify that $-\log (1-z) \leq \theta(\gamma) z$ for every $z\in [0,\gamma]$. 
	Note that almost surely on the event $\{\sup_{s \leq t, y \in F} u_{s, y} \leq \gamma\}$, for every $i\in \mathbb N$, we have $u_{t,x_i}\in [0,\gamma]$ and therefore $-\log(1-u_{t,x_i}) \leq \theta(\gamma) u_{t,x_i}$.
	So we can verify that
\begin{align}
	&\ex \left[(1 - \ve)^{Z_t(U)}\right]
	\geq \mathbf E_{\varepsilon \mathbf 1_U}\left[\exp\left(-\theta(\gamma)\sum_{i = 1}^\infty u_{t, x_i}\right)\mathbf 1_{\{\sup_{s \leq t, y \in F} u_{s, y} \leq \gamma\}}\right]
	\\&\geq \mathbf E_{\varepsilon \mathbf 1_U} \left[\exp\left(-\theta(\gamma)\sum_{i = 1}^\infty u_{t, x_i}\right)\right] - \mathbf P_{\varepsilon \mathbf 1_U }\left(\sup_{s \leq t, y \in F} u_{s, y} > \gamma\right).
	\qedhere
\end{align}
\end{proof}

\begin{proof}[Proof of Lemma \ref{:IFP_2}]
	Notice that, for the statement (2), we only have to show that $\mathcal V_t^{(\Lambda,\mu,F)}$ is finite when $F$ is the \emph{smallest} closed interval containing $\cup_{i\in \mathbb N}(x_i-1,x_i+1)$. 
	So let us denote by $F$ this smallest closed interval, and  by $\tilde F$ the smallest closed interval containing  $\{x_i:i\in \mathbb N\}.$
	From the condition  that $U \cap \{x_i:i\in \mathbb N\}$ is bounded, it is easy to see that $U \cap \tilde F$ is bounded.
	It is also clear that $(\Lambda, \mu) \preceq (\tilde F,\mathbf 0)$.	
	There are four cases to consider, for which we derive both statements (1) and (2).

	\emph{Case (1), $\tilde F = \mathbb R$}: In this case $U$ must be a bounded interval, say $(\alpha,\beta)$. From \eqref{:II.523} we know that $\int_U v^{(\Lambda,\mu)}_{t,x} \mathrm dx \leq 2(\beta - \alpha)/t$ is finite. 
	It is also obvious that $\mathcal V^{(\Lambda,\mu,\mathbb R)}_t = 0$ is finite.

	\emph{Case (2), $\tilde F = (-\infty, b]$ for some $b\in \mathbb R$}:
	In this case, $F=(-\infty,b+1]$ and $U$ must be the subset of $(\alpha,\infty)$ for some $\alpha \in \mathbb R$.
	From \eqref{:II.522} and \eqref{:II.524}, we have
\[
	\int_U v^{(\Lambda,\mu)}_{t,x} \mathrm dx 
	\leq \int_\alpha^\infty \left( v_{t,x}^{(\mathbb R,\mathbf 0)}\mathbf 1_{x<b} + v_{t,x-b}^{((-\infty, 0], \mathbf 0)}\mathbf 1_{x\geq b} \right)\mathrm dx.
\]
	One can verify that the integral on the right hand side is finite using \eqref{:II.523} and \eqref{:II.526}.
	One can also verify that 
\[
	\mathcal V^{(\Lambda, \mu, F)}_t
	\leq \int_0^t  \int_{b+1}^{\infty}  \left(v^{((-\infty,0],\mathbf 0)}_{r,z-b}\right)^2 \, \mathrm dz \, \mathrm dr
\]
	where the right hand side is finite by \eqref{:II.526}.

	\emph{Case (3), $\tilde F = [a,\infty)$ for some $a\in\mathbb R$}:
	This is similar to Case 2, thanks to the spatial symmetry  of the PDE \eqref{:.05}. 

	\emph{Case (4), $\tilde F=[a,b] $ for some $-\infty< a< b<\infty$}:
	Now $F=[a-1,b+1]$.
	From \eqref{:II.522} and \eqref{:II.524}, we have
\[
	\int_U v^{(\Lambda,\mu)}_{t,x} \mathrm dx 
	\leq \int v_{t,x-\frac{a+b}{2}}^{([-\frac{b-a}{2},\frac{b-a}{2}],\mathbf 0)}\mathrm dx.
\]
	One can verify that the integral on the right hand side is finite using \eqref{:II.54}.
	One can also verify that 
\[
	\mathcal V^{(\Lambda, \mu, F)}_t
	\leq \int_0^t  \left(\int_{-\infty}^{a-1} + \int_{b+1}^{\infty}\right)  \left(v_{r,z-\frac{a+b}{2}}^{([-\frac{b-a}{2},\frac{b-a}{2}],\mathbf 0)}\right)^2 \mathrm dz \mathrm dr
\]
where the right hand side is finite by \eqref{:II.54}. 
\begin{note}
	Let us verify those here.
	In fact, there exists a constant $\C\label{:II.545}>0$ such that 
\begin{align}
	&\int_k^\infty v^{([-k,k],\mathbf 0)}_{t,x}\mathrm dx \leq \int_k^\infty \frac{\Cr{:II.53}}{t}\Big(1+\frac{x-k}{\sqrt{t}}\Big)e^{-\frac{|x-k|^2}{2t}}\mathrm dx
	\\&=\int_0^\infty \frac{\Cr{:II.53}}{t}\Big(1+\frac{z}{\sqrt{t}}\Big)e^{-\frac{z^2}{2t}}\mathrm dz
	=\frac{\Cr{:II.53}}{\sqrt{t}}\int_0^\infty (1+w)e^{-\frac{w^2}{2}}\mathrm dw = \frac{\Cr{:II.545}}{\sqrt{t}}.
\end{align}
	Therefore, from \eqref{:II.52} and the fact that $v^{([-k,k],\mathbf 0)}_{t,x}$ is an even function in $x$, we have
\begin{equation}\label{:II.445}
	\int v^{([-k,k],\mathbf 0)}_{t,x}\mathrm dx \leq \frac{4k}{t}+\frac{2\Cr{:II.545}}{\sqrt{t}},\quad t>0.
\end{equation}
	We can also verify that 
\begin{align}
	&\int_n^\infty (v^{([-k,k],\mathbf 0)}_{t,x})^2 \mathrm dx \leq \int_n^\infty \frac{\Cr{:II.53}^2}{t^2} \Big(1+ \frac{x-k}{\sqrt{t}}\Big)^2 e^{-\frac{(x-k)^2}{t}} \mathrm dx
	\\&\leq \frac{\Cr{:II.53}^2}{t^2} \int_{n-k}^\infty  \Big(1+ \frac{x}{\sqrt{t}}\Big)^2 e^{-\frac{x^2}{t}} \mathrm dx \leq  \frac{2\Cr{:II.53}^2}{t^2}\int_{n-k}^\infty  \Big(1+ \frac{x^2}{t}\Big) e^{-\frac{x^2}{t}} \mathrm dx
	\\&= \frac{2\Cr{:II.53}^2 \sqrt{t}}{t^2}\Big( \int_{\frac{n-k}{\sqrt{t}}}^\infty   e^{-x^2} \mathrm dx + \int_{\frac{n-k}{\sqrt{t}}}^\infty  x^2 e^{-x^2} \mathrm dx\Big).
\end{align}
	According to the internet, we have
\begin{equation}
	\int_{z}^\infty e^{-x^2}\mathrm dx
	= \frac{1}{2}\sqrt{\pi}\operatorname{erfc}(z)
	\leq \frac{1}{2}\sqrt{\pi}e^{-z^2}, \quad z> 0
\end{equation}
	and
\begin{equation}
	\int_{z}^\infty x^2e^{-x^2} dx 
	= \frac{1}{4}(\sqrt{\pi} \operatorname{erfc}(z)+2e^{-z^2}z) \leq \frac{1}{4}(\sqrt{\pi} +2z)e^{-z^2}, 
	\quad z>0.
\end{equation}
	Putting those back, there exists a constant $\C\label{:II.546}$, independent of $k>0$, such that  
\begin{align}
	&\int_n^\infty (v^{([-k,k],\mathbf 0)}_{t,x})^2 \mathrm dx \leq \frac{2\Cr{:II.53}^2 \sqrt{t}}{t^2}\Big( \frac{3}{4}\sqrt{\pi}e^{-\frac{(n-k)^2}{t}}+ \frac{1}{2}\frac{n-k}{\sqrt{t}}e^{-\frac{(n-k)^2}{t}}\Big)
	\\&\leq \Cr{:II.546}t^{-\frac{3}{2}}\Big(1+\frac{n-k}{\sqrt{t}}\Big)e^{-\frac{(n-k)^2}{t}}, \quad t>0, n>k.
\end{align}
	Therefore,
\begin{equation} 
	\int_0^t \mathrm dr \int_{(-n,n)^{\mathrm c}} (v^{([-k,k],\mathbf 0)}_{t,x})^2\mathrm dz
	\leq 2\Cr{:II.546}\int_0^t r^{-\frac{3}{2}}\Big(1+\frac{n-k}{\sqrt{r}}\Big)e^{-\frac{(n-k)^2}{r}}\mathrm dr, 
	\quad t>0, n>k.
\end{equation}
\end{note}
\end{proof}
\begin{note}
	Let us give a useful result for our SPDE.
	Write $G_{s,y;r,z} = G_{r-s,z-y}$ for every $r>s\geq 0$ and $y,z\in \mathbb R$.
	Let $\mathcal C_0(\mathbb R)^*$ be the space of signed Radon measures on $\mathbb R$ with finite total variation.
	By the Riesz-Markov representation theorem, $\mathcal C_0(\mathbb R)^*$ is also the space of continuous linear functional of $\mathcal C_0(\mathbb R)$. 
\begin{lemma} \label{:IFLI}
	Let $t>0$, $\mu\in \mathcal C_0(\mathbb R)^*$ and $F\in L^1([0,t]\times \mathbb R)$.
	Suppose that for every $s\in [0,t)$ and $y\in \mathbb R$,
\[
	\phi_{s,y}
	:= \int G_{s,y;t,x} \mu(\mathrm dx) - \iint_s^t G_{s,y;r,z}F_{r,z}\mathrm dr\mathrm dz \in \mathbb R.
\]
	Then $\phi \in L^2([0,t)\times \mathbb R)$ and $\phi_{s,\cdot}\in L^1(\mathbb R)$ for every $s\in [0,t)$. 
	Moreover, almost surely for every $r\in [0,t]$,
\begin{align}
	&\int  u_{r,x} \tilde \phi_r(\mathrm dx) - \int \phi_{0,x}u_{0,x}\mathrm dx
	\\&\label{:II.5431}=\iint_0^r F_{s,y}u_{s,y}\mathrm ds\mathrm dy+\iint_0^r \phi_{s,y} \sqrt{u_{s,y}(1-u_{s,y})}W(\mathrm ds\mathrm dy) 
\end{align}
	where $\tilde \phi_t :=\mu$, and for every $r\in [0,t)$, $\tilde \phi_r\in \mathcal C_0(\mathbb R)^*$ is defined so that $ \int f_x\tilde \phi_r(\mathrm dx) = \int f_x\phi_{r,x}\mathrm dx$ for every $f\in \mathcal C_0(\mathbb R)$.
\end{lemma}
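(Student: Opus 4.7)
The plan is to derive the identity starting from the mild form \eqref{:II.35} of the Wright--Fisher SPDE for $u$, testing against $\tilde\phi_r$, and then rearranging via the Chapman--Kolmogorov relation $\int G_{s_1,x-y}G_{s_2,z-x}\,\mathrm dx = G_{s_1+s_2,z-y}$ together with two applications of stochastic Fubini. The identity falls out almost algebraically; the analytic substance is confined to verifying the integrability needed to justify each exchange of integration.

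First I would verify the two integrability claims. For $\phi_{s,\cdot}\in L^1(\R)$, Young's convolution inequality gives $\|G_{t-s,\cdot}*\mu\|_{L^1}\le |\mu|(\R)$, while Fubini combined with $\|G_{r-s,\cdot}\|_{L^1}=1$ controls the source contribution by $\|F\|_{L^1([s,t]\times\R)}$. For $\phi\in L^2([0,t)\times\R)$ one uses the heat-kernel bound $\|G_{r,\cdot}\|_{L^2}\le C r^{-1/4}$; Young's inequality then yields $\|G_{t-s}*\mu\|_{L^2}^2\le C(t-s)^{-1/2}|\mu|(\R)^2$, integrable in $s$ on $[0,t)$, and Minkowski's integral inequality together with Young's in the form $\|K*g\|_{L^2}\le \|K\|_{L^2}\|g\|_{L^1}$ handles the source term in terms of $\|F\|_{L^1([0,t]\times\R)}$.

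For the main identity, fix $r\in[0,t]$ and integrate the mild form of $u_{r,\cdot}$ against $\tilde\phi_r$:
\begin{align}
\int u_{r,x}\tilde\phi_r(\mathrm dx) = \int u_{0,y}\Psi_r(y)\,\mathrm dy + \iint_0^r \Xi_{r,s,y}\sqrt{u_{s,y}(1-u_{s,y})}\,W(\mathrm ds\,\mathrm dy),
\end{align}
where $\Psi_r(y):=\int G_{r,x-y}\tilde\phi_r(\mathrm dx)$ and $\Xi_{r,s,y}:=\int G_{r-s,x-y}\tilde\phi_r(\mathrm dx)$, pending justification of stochastic Fubini. Using the definition of $\phi$ and the semigroup identity one checks
\begin{align}
\Psi_r(y) &= \phi_{0,y} + \iint_0^r G_{r',z-y}F_{r',z}\,\mathrm dr'\mathrm dz, \\
\Xi_{r,s,y} &= \phi_{s,y} + \iint_s^r G_{r'-s,z-y}F_{r',z}\,\mathrm dr'\mathrm dz.
\end{align}
Substituting these back, a second stochastic Fubini converts the extra $F$-integral in $\Xi_{r,s,y}$ into a deterministic $\mathrm dr'\mathrm dz$-integral of $\iint_0^{r'}G_{r'-s,z-y}\sqrt{u(1-u)}W(\mathrm ds\,\mathrm dy)$, which by the mild form equals $u_{r',z}-\int G_{r',z-y}u_{0,y}\,\mathrm dy$. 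The auxiliary pieces then cancel, leaving the claimed identity.

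The principal obstacle is the rigorous justification of the two stochastic Fubini swaps, which I expect to handle via Walsh's stochastic Fubini theorem \cite{MR876085} under the $L^1$ and $L^2$ controls secured in the first step and the trivial bound $u(1-u)\le 1/4$; the analogous $\iint_0^{r'}G_{r'-s,z-y}^2\mathrm ds\,\mathrm dy=O(\sqrt{r'})$ estimate together with $F\in L^1$ keeps the second swap square-integrable. A further subtlety lies at the endpoint $r=t$, where $\tilde\phi_t=\mu$ is only a possibly signed measure; there the stochastic Fubini is taken against $\mu(\mathrm dx)$ in place of $\phi_{r,x}\mathrm dx$, which one justifies by approximating $\mu$ by finite weighted sums of Dirac masses and passing to the limit using $|\mu|(\R)<\infty$ and the $x$-uniform bound on $\iint_0^t G_{t-s,x-y}^2\,\mathrm ds\,\mathrm dy$. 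Alternatively, one proves the case $r<t$ first and then passes to the limit $r\uparrow t$ via continuity of both sides, using the $\mathcal C_{[0,1]}$-valued continuity of $(u_{t,\cdot})_{t>0}$ on the left and $L^2$-martingale convergence on the right.
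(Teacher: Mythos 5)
Your argument follows essentially the same route as the paper's: start from the mild form of the Wright--Fisher SPDE, test against $\tilde\phi_r$, use (stochastic) Fubini to move the test measure through the heat kernel, apply the semigroup identity $\int G_{s,y;r,z}\phi_{r,z}\,\mathrm dz = \phi_{s,y}+\iint_s^r G_{s,y;\rho,\zeta}F_{\rho,\zeta}\,\mathrm d\rho\,\mathrm d\zeta$ to split off the source contribution, and then interchange once more to reconstitute $u_{\rho,\zeta}$. The $L^1$ and $L^2$ estimates you sketch for $\phi$ are correct (the paper obtains the $L^2$ bound on the source part by Cauchy--Schwarz against $|F|/\|F\|_{L^1}$ rather than by Minkowski plus $\|K*g\|_{L^2}\le\|K\|_{L^2}\|g\|_{L^1}$, but both work). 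One organizational difference: the paper first proves the identity at the endpoint $r=t$, where $\tilde\phi_t=\mu$ is a genuine measure, and then transfers to $r<t$ by treating the restriction of $\phi$ to $[0,r)\times\R$ as a new instance with terminal datum $\phi_{r,\cdot}\,\mathrm dx$; you instead run the computation directly for each fixed $r$ and handle the measure endpoint as a limit. Both are fine.

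There is, however, a genuine gap. Your computation fixes $r\in[0,t]$ and yields, for each such $r$, an almost-sure identity; the null set a priori depends on $r$. The lemma asserts the stronger ``almost surely, for \emph{every} $r\in[0,t]$'' --- a single exceptional null set. The paper has a dedicated final step for this upgrade: it observes that the right side of \eqref{:II.5431} is a.s.\ continuous in $r$ (the Lebesgue term because $F\in L^1$ and $u$ is bounded, the stochastic term because $\phi\in L^2([0,t)\times\R)$), and then separately verifies a.s.\ continuity of $r\mapsto\int u_{r,x}\,\tilde\phi_r(\mathrm dx)$ --- which is not immediate, since both $u_{r,\cdot}$ and $\tilde\phi_r$ vary with $r$ and $\tilde\phi_t$ is only a measure; the paper does this by rewriting $\int h_{s,y}\,\tilde\phi_s(\mathrm dy)$ via Brownian transition expectations and invoking bounded convergence. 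Your remark about ``$\mathcal C_{[0,1]}$-valued continuity of $u$'' gestures in the right direction but only addresses the $u$-dependence; you still need to handle the $r$-dependence coming from $\tilde\phi_r$. Without some such continuity argument, the conclusion you prove is strictly weaker than the one stated.
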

	To proof the above lemma, we recall the stochastic Fubini's theorem.
\begin{lemma}[Stochastic Fubini's theorem, c.f. {\cite[Lemma 2.4]{MR863723}}]
	Suppose that $\Omega$ is a filtered probability space satisfying the usual condition.
	Denote by $\mathscr L$ the predictable $\sigma$-algebra of $\Omega \times [0,\infty)$. 
	Suppose that $(U,\mathscr U,m)$ is a finite measure space.
	Let $W$ be a space-time white noise living in $\Omega$.
	Suppose that $\psi$ is an $\mathscr L\otimes \mathscr B(\mathbb R)\otimes \mathscr U$-measurable map from $\Omega\times [0,\infty)\times \mathbb R \times U$ to $\mathbb R$ such that
\[
	\int_Um(\mathrm du)\iint_0^T \psi(\omega,t,x,u)^2 \mathrm dt\mathrm dx<\infty, \quad T>0, 
	\quad\text{a.s.}
\]
	Then there exists an $\mathscr L\otimes \mathscr U$-measurable map $\Psi$ from $\Omega \times [0,\infty)\times U$ to $\mathbb R$ such that for $m$-almost every $u\in U$,
\[
	\Psi(\omega,T,u)
	=\iint_0^T \psi(\omega,t,x,u)W(\mathrm dt,\mathrm dx), 
	\quad T>0, \quad \text{a.s.}
\]
	Furthermore, for any such $\Psi$, it holds that 
\[
	\int_U\Psi(\omega,T,u)m(\mathrm du)
	=\iint_0^T\Big(\int_U \psi(\omega,t,x,u)m(\mathrm du)\Big)W(\mathrm dt\mathrm dx),\quad T>0,
	\quad \text{a.s.}
\]
\end{lemma}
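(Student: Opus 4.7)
The plan follows the standard three-step approach for stochastic Fubini-type identities in Walsh's framework: verify the identity on simple integrands, extend by $L^2$-density after localisation, and construct a jointly measurable version of $\Psi$.

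First, I would work on the class $\mathcal S$ of simple predictable kernels of the form $\psi(\omega,t,x,u) = \mathbf 1_A(\omega)\mathbf 1_{(a,b]}(t)\mathbf 1_B(x)g(u)$, where $A\in\mathscr F_a$, $B\in\mathscr B(\mathbb R)$ is bounded, and $g$ is bounded and $\mathscr U$-measurable. For such $\psi$, Walsh's definition of the stochastic integral gives the explicit expression $\Psi(\omega,T,u) = g(u)\mathbf 1_A(\omega) W\bigl((a\wedge T,b\wedge T]\times B\bigr)$, so that both $\int_U\Psi(\omega,T,u)\,m(du)$ and the Walsh integral of $\int_U \psi(\omega,t,x,u)\,m(du)$ coincide with $\bigl(\int_U g\,dm\bigr)\mathbf 1_A W\bigl((a\wedge T,b\wedge T]\times B\bigr)$. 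Linearity extends the identity to $\operatorname{span}\mathcal S$, on which $\Psi$ is manifestly $\mathscr L\otimes \mathscr U$-measurable.

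Second, I would reduce the almost-sure integrability hypothesis to the $L^2$ setting by localisation: set $\tau_n := \inf\{T\geq 0 : \int_U m(du)\iint_0^T \psi^2\,dt\,dx \geq n\}$, and write $\psi^{(n)} := \psi\,\mathbf 1_{[0,\tau_n]}(t)$, which lies in the Hilbert space $H$ of predictable kernels under $\|\psi\|^2 := \mathbb E\int_U m(du)\iint_0^\infty \psi^2\,dt\,dx$. A monotone class argument shows $\operatorname{span}\mathcal S$ is $\|\cdot\|$-dense in $H$; for $\psi_k \in \operatorname{span}\mathcal S$ converging to $\psi^{(n)}$ in $H$, Walsh's $L^2$-isometry makes $\Psi_k(\cdot,T,\cdot)$ Cauchy in $L^2(\mathbb P\otimes m)$, while classical (deterministic) Fubini followed by a second application of the isometry gives convergence of the Walsh integrals of $\int_U\psi_k\,m(du)$ to $\iint_0^T\bigl(\int_U \psi^{(n)}\,m(du)\bigr)\,W(dt,dx)$ in $L^2(\mathbb P)$. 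Passing to the limit gives the identity for $\psi^{(n)}$, and sending $n\to\infty$ using $\tau_n\uparrow\infty$ a.s.\ yields it for $\psi$.

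The main obstacle is producing a jointly $\mathscr L\otimes\mathscr U$-measurable representative $\Psi$, since \emph{a priori} the $L^2$-limit only determines $\Psi(\cdot,T,\cdot)$ modulo $\mathbb P\otimes m$-null sets. I would extract a subsequence $\Psi_{k_j}$ converging $\mathbb P\otimes m$-almost everywhere and \emph{define} $\Psi := \limsup_j \Psi_{k_j}$ on the convergence set and $0$ elsewhere; each $\Psi_{k_j}$ is $\mathscr L\otimes\mathscr U$-measurable from its explicit form in Step 1, hence so is the $\limsup$. Applying Doob's maximal inequality to the $L^2$-martingales $T\mapsto \Psi_k(\cdot,T,u)-\Psi_{k'}(\cdot,T,u)$ upgrades this to almost-sure uniform convergence on compact $T$-intervals, so the identity holds for all $T>0$ on a single full-measure event. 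The concluding ``furthermore'' clause is then automatic, since any two $\mathscr L\otimes\mathscr U$-measurable versions of $\Psi$ agree $\mathbb P\otimes m$-a.e., a modification invisible to the outer $m$-integral.
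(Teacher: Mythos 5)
The paper does not supply a proof of this lemma: it is quoted verbatim as Lemma~2.4 of Iwata \cite{MR863723}, and the entire passage is wrapped in a \texttt{note} environment that the preamble strips, so there is nothing in the paper to compare against. Evaluated on its own merits, your argument is a sound and essentially standard proof of the stochastic Fubini identity in Walsh's framework, and it is almost certainly the same skeleton as Iwata's: verify the identity on simple predictable kernels where both sides can be computed explicitly, localise with the stopping times $\tau_n$ to reduce the a.s.-integrability hypothesis to the $L^2$ setting, extend by density via the Walsh $L^2$-isometry (the finiteness of $m$ is exactly what makes the Cauchy--Schwarz step $\bigl(\int_U(\psi_k-\psi)\,dm\bigr)^2\le m(U)\int_U(\psi_k-\psi)^2\,dm$ work), and extract a $\mathbb P\otimes m$-a.e.\ convergent subsequence to manufacture a jointly $\mathscr L\otimes\mathscr U$-measurable version. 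Two points deserve an extra sentence of care. First, when you apply Doob's maximal inequality to $T\mapsto\Psi_k(\cdot,T,u)-\Psi_{k'}(\cdot,T,u)$ you should integrate the maximal inequality against $m(\mathrm du)$ \emph{before} extracting the subsequence, so that the uniform convergence on compact $T$-intervals is genuinely $\mathbb P\otimes m$-a.e.\ and not merely a.s.\ for each fixed $u$ with a $u$-dependent exceptional set. Second, when you remove the localisation by sending $n\to\infty$, the $m$-null set of $u$ on which $\Psi^{(n)}(\omega,T,u)$ differs from the Walsh integral of $\psi$ depends a priori on $n$; one should patch the $\Psi^{(n)}$ together on the predictable sets $\{T\le\tau_n\}$ using their a.e.\ consistency $\Psi^{(n)}=\Psi^{(n+1)}$ there, to produce a single $\mathscr L\otimes\mathscr U$-measurable $\Psi$ covering all $T$. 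Neither of these is a gap in the idea, only a place where the bookkeeping of null sets needs to be made explicit.
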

\begin{proof}[Proof of Lemma \ref{:IFLI}]
\emph{Step 1.}
	Let us verify that  $\phi \in L^2([0,t)\times \mathbb R)$ and $\phi_{s,\cdot}\in L^1(\mathbb R)$ for every $s\in [0,t)$.
	Notice that 
\[
	|\phi_{s,y}| 
	\leq \int G_{s,y;t,x}|\mu|(\mathrm dx) + \iint_s^t G_{s,y;r,z}|F_{r,z}|\mathrm dr\mathrm dz, 
	\quad s\in [0,t),y\in \mathbb R.
\]
	Therefore,
\[
	\int |\phi_{s,y}|\mathrm dy 
	\leq \int |\mu|(\mathrm dx) + \iint_s^t |F_{r,z}|\mathrm dr\mathrm dz <\infty, 
	\quad s\in [0,t).
\]
	Also notice that for every $s\in [0,t)$ and $y\in \mathbb R$,
\begin{align}
	&|\phi_{s,y}|^2 
	\leq 2 \Big(\int G_{s,y;t,x}|\mu|(\mathrm dx)\Big)^2 + 2\Big(\iint_s^t G_{s,y;r,z}|F_{r,z}|\mathrm dr\mathrm dz\Big)^2
	\\&\leq 2 |\mu|(\mathbb R)\int G_{s,y;t,x}^2 |\mu|(\mathrm dx) + 2\|F\|_{L^1([s,t]\times \mathbb R)}\iint_s^t G^2_{s,y;r,z}|F_{r,z}|\mathrm dr\mathrm dz
\end{align}
	We can verify for every $r>0$ and $z\in \mathbb R$,
\begin{equation}
	\iint_0^r G_{s,y;r,z}^2 \mathrm ds\mathrm dy
	\leq \iint_0^r \frac{G_{s,y;r,z}}{\sqrt{2\pi(t-s)}} \mathrm ds\mathrm dy
	= \frac{1}{\sqrt{2\pi}}\int_0^r \frac{1}{\sqrt{s}} \mathrm ds
	= \sqrt{\frac{2r}{\pi}}.
\end{equation}
	Therefore
\begin{align}
	&\iint_0^t |\phi_{s,y}|^2 \mathrm ds\mathrm dy \leq 2 \sqrt{\frac{2t}{\pi}} |\mu|(\mathbb R)^2+2 \|F\|_{L^1([s,t]\times \mathbb R)} \iint_0^t \sqrt{\frac{2r}{\pi}}|F_{r,z}|\mathrm dr\mathrm dz
	\\& \leq 2 \sqrt{\frac{2t}{\pi}} \Big(|\mu|(\mathbb R)^2+ \|F\|_{L^1([0,t]\times \mathbb R)}^2\Big). 
\end{align}
\emph{Step 2.} 
	Let us verify that \eqref{:II.5431} holds almost surely when $r=t$.
	To simplify the notation, let us write the ``measure"
\[
	M^u(\mathrm ds\mathrm dy) 
	:= u_{0,y}\delta_0(\mathrm ds)\mathrm dy + \sqrt{u_{s,y}(1-u_{s,y})}W(\mathrm ds\mathrm dy)
\]
	whose integral is defined as a sum of the classical integral and Walsh's stochastic integral in an obvious way. 
	From \eqref{:II.35}, we have that
\[
	u_{r,z} 
	= \iint_0^r G_{s,y;r,z}M^u(\mathrm ds\mathrm dy), \quad \text{a.s.} \quad r> 0, z\in \mathbb R.
\]
	We will explain that almost surely
\begin{align}
	&\int  u_{t,x}\tilde \phi_t(\mathrm dx) 
	= \int \tilde \phi_{t}(\mathrm dx) \iint_0^t G_{s,y;t,x} M^u(\mathrm ds\mathrm dy) 
	\\&\label{:II.412}= \iint_0^tM^u(\mathrm ds\mathrm dy)\int G_{s,y;t,x}\tilde \phi_{t} (\mathrm dx)   
	\\&=\iint_0^t \phi_{s,y} M^u(\mathrm ds\mathrm dy)+ \iint_0^t M^u(\mathrm ds\mathrm dy)\iint_s^t   G_{s,y;r,z}F_{r,z}\mathrm dr\mathrm dz  
	\\&\label{:II.413}=\iint_0^t \phi_{s,y}  M^u(\mathrm ds\mathrm dy)+ \iint_0^t F_{r,z}\mathrm dr\mathrm dz  \iint_0^rG_{s,y;r,z} M^u(\mathrm ds\mathrm dy)
	\\&=\int \phi_{0,x} u_{0,x}\mathrm dx +\iint_0^t \phi_{s,y}\sqrt{u_{s,y}(1-u_{s,y})}W(\mathrm ds\mathrm dy)+ \iint_0^t F_{s,y} u_{s,y}\mathrm ds\mathrm dy. 
\end{align}
	To interchange the order of the integral in \eqref{:II.412} using (stochastic) Fubini's theorem, we verify that for every $t>0$, 
\[
	\int  |\tilde \phi_{t}|(\mathrm dx) \iint_0^t  G_{s,y;t,x} \delta_0(\mathrm ds)\mathrm dy 
	= \int |\tilde \phi_{t}|(\mathrm dx)<\infty 
\]
	and 
\begin{equation}
	\int |\tilde \phi_{t}|(\mathrm dx) \iint_0^t G_{s,y;t,x}^2 \sigma(u_{s,y})^2\mathrm ds\mathrm dy
	\leq \sqrt{\frac{2t}{\pi}}\int |\tilde \phi_{t}| (\mathrm dx) <\infty.
\end{equation}
	Here $\sigma(h) :=\sqrt{h(1-h)}$ for every $h\in [0,1]$.
	To interchange the order of the integral in the second term of \eqref{:II.413} using (stochastic) Fubini's theorem, we verify that for every $t>0$, 
\[
	\iint_0^t |F_{r,z}|\mathrm dr\mathrm dz  \iint_0^rG_{s,y;r,z} \delta_0(\mathrm ds)\mathrm dy
	=\iint_0^t |F_{r,z}|\mathrm dr\mathrm dz  <\infty
\]
	and
\[
	\iint_0^t |F_{r,z}|\mathrm dr\mathrm dz  \iint_0^rG_{s,y;r,z}^2 \sigma(u_{s,y})^2 \mathrm ds\mathrm dy
	\leq \iint_0^t |F_{r,z}|\sqrt{\frac{2r}{\pi}}\mathrm dr\mathrm dz<\infty. 
\]
	
\emph{Step 3.}
	Let us verify that \eqref{:II.5431} holds almost surely for every $r\in [0,t]$.
	This is trivial when $r=0$, and already been shown when $r = t$ in Step 2.
	Now let $r\in (0,t)$ be arbitrary. For every $s\in [0,r)$, we can verify from Fubini's theorem that
\begin{align}
	&\int G_{s,y;r,z} \phi_{r,z}\mathrm dz 
	= \int G_{s,y;r,z} \mathrm dz\Big(\int G_{r,z;t,x} \tilde\phi_{t}(\mathrm dx) - \iint_r^t G_{r,z;\gamma,\alpha}F_{\gamma,\alpha}\mathrm d\gamma\mathrm d\alpha\Big)
	\\&= \int G_{s,y;t,x} \tilde\phi_{t}(\mathrm dx) -  \iint_r^t G_{s,y;\gamma,\alpha}F_{\gamma,\alpha}\mathrm d\gamma\mathrm d\alpha.
\end{align}
	This implies that, for every $s\in [0,r)$ and $y\in \mathbb R$,
\begin{align}
	\phi_{s,y}
	= \int G_{s,y;r,z} \phi_{r,z}\mathrm dz - \iint_s^r G_{s,y;\gamma,\alpha}F_{\gamma,\alpha}\mathrm d\gamma\mathrm d\alpha.
\end{align}
	Now from what we have proved in Step 2, we can get the desired result for this step.

\emph{Final Step.}
	The previous step says that \eqref{:II.5431} holds almost surely for every $r\in [0,t]$. 
	Let us now verify that \eqref{:II.5431} holds for every $r\in [0,t]$ almost surely. 
	Thanks to Step 3, and the fact that the continuous modification of a continuous process is indistinguishable to that process, we only have to verify that almost surely, the left hand side of \eqref{:II.5431} is continuous in $r\in [0,t]$.
	To do this, let $h\in \mathrm b\mathcal C([0,t]\times \mathbb R)$ be arbitrary. Then we can verify that for every $s\in [0,t)$,
\begin{align}
	&\int h_{s,y}\tilde \phi_s(\mathrm dy)=\int h_{s,y}\mathrm dy\Big(\int G_{s,y;t,x}\tilde \phi_{t}(\mathrm dx) - \iint_s^t G_{s,y;r,z}F_{r,z}\mathrm dr\mathrm dz\Big)  
	\\&=\int \tilde\phi_{t}(\mathrm dx)\int G_{s,y;t,x} h_{s,y}\mathrm dy- \iint_s^t F_{r,z}\mathrm dr\mathrm dz\int G_{s,y;r,z}h_{s,y}\mathrm dy  
	\\\label{:II.414}&=\int \Pi_{x}[h_{s,B_{t-s}}]\tilde \phi_t(\mathrm dx)- \iint_s^t F_{r,z} \Pi_{z}[h_{s,B_{r-s}}]  \mathrm dr\mathrm dz
\end{align}
	where $\Pi_{x}$ is the law of a Brownian motion $(B_t)_{t\geq 0}$ initiated at position $x\in \mathbb R$. 
	Obviously \eqref{:II.414} also holds when $s=t$. 
	It is then easy to see that $s\mapsto \int h_{s,y}\tilde \phi_s(\mathrm dy)$ is a continuous map on $[0,t]$ using bounded convergence theorem.
	Observing that $u\in \mathrm b\mathcal C([0,t]\times \mathbb R)$ almost surely, we are done.
\end{proof}
\end{note}
\begin{proof}[Proof of Lemma \ref{:IFP_25}]
	 Let $n\in \mathbb N$ be arbitrary. 
	Define a stopping time \[ \tau := \inf\left\{s\geq 0: \sup_{y \in F}u_{s,y} > \gamma\right\}, \] and a process
\begin{align} \label{:II.41}
	M_s^{(\gamma,n)}
	:=
\begin{cases} \displaystyle 
	\frac{1}{1-\gamma}\int u_{s,y}v^{(\varnothing,\mu^{(\gamma,n)})}_{t-s,y}\mathrm dy,& 
	\quad s\in [0,t), 
\\ \displaystyle
	\frac{1}{1-\gamma}\int u_{t,y}\mu^{(\gamma,n)}(\mathrm dy),& \quad s=t. 
\end{cases}
\end{align} 
	where
\(
	\mu^{(\gamma, n)}(\mathrm d y) 
	= (1 - \gamma)\theta(\gamma)\sum_{i = 1}^n\delta_{x_i}.
\)
	One can show that $\{M_s^{(\gamma, n)} : s\in [0,t]\}$ is a continuous semi-martingale. 
	In fact, using the stochastic Fubini's theorem for space-time white noise (c.f. \cite{MR863723}*{Lemma 2.4}), it is standard to verify that almost surely, for every $s\in [0,t]$, 
\begin{align}
	&M_s^{(\gamma,n)} - M_0^{(\gamma,n)} 
	\\&= \frac{1}{1-\gamma}\Big(\frac{1}{2}\iint_0^s (v^{(\varnothing,\mu^{(\gamma,n)})}_{t-r,z})^2  u_{r,z} \mathrm dr\mathrm dz + \iint_0^s v^{(\varnothing,\mu^{(\gamma,n)})}_{t-r,z} \sqrt{ u_{r,z}\big(1-u_{r,z}\big)}W(\mathrm dr\mathrm dz)\Big).
\end{align}
\begin{note}
	Let us explain the above claim here.
	Firstly, from \cite[Theorem 3]{MR700049} we have
\[
	\iint_0^\infty (v^{(\varnothing,\mu^{(\gamma,n)})}_{s,y})^2 \mathrm ds\mathrm dy	<\infty.
\]
	Secondly, from \cite[Proof of Theorem 3]{MR700049} we have
\[
	v^{(\varnothing,\mu^{(\gamma,n)})}_{t,x} 
	= \int G_{0,y;t,x}\mu^{(\gamma, n)}(\mathrm dy) - \frac{1}{2}\iint_0^t G_{s,y;t,x} (v^{(\varnothing,\mu^{(\gamma,n)})}_{s,y})^{2}\mathrm ds\mathrm dy, 
	\quad t>0,x\in \mathbb R.
\]
	From this, we can verify that 
\begin{align}
	&v^{(\varnothing,\mu^{(\gamma,n)})}_{t-s,y}= \int G_{0,x;t-s,y}\mu^{(\gamma, n)}(\mathrm dx) - \frac{1}{2}\iint_{0}^{t-s} G_{r,z;t-s,y} (v^{(\varnothing,\mu^{(\gamma,n)})}_{r,z})^{2}\mathrm dr\mathrm dz
	\\&= \int G_{s,y;t,x} \mu^{(\gamma, n)}(\mathrm dx) - \frac{1}{2}\iint_{s}^{t} G_{s,y;r,z}(v^{(\varnothing,\mu^{(\gamma,n)})}_{t-r,z})^2\mathrm dr\mathrm dz, \quad s\in [0,t),y\in \mathbb R.
\end{align}
	Now the desired claim follows from Lemma \ref{:IFLI}. 
\end{note}
	By Ito's formula, we can verify that almost surely for every $s\in [0,t]$, 
\begin{align}
	&e^{-M^{(\gamma,n)}_s} - e^{-M^{(\gamma,n)}_0} 
\\ \label{:II.42}
	&=  -\frac{1}{2(1-\gamma)^2}\iint_0^s e^{-M^{(\gamma, n)}_r} (v^{(\varnothing,\mu^{(\gamma,n)})}_{t-r,z})^2(u_{r,z}^2-\gamma u_{r,z})  \mathrm dr\mathrm dz 
      \\& \qquad - \frac{1}{1-\gamma}\iint_0^s e^{-M_r^{(\gamma, n)}}v^{(\varnothing,\mu^{(\gamma, n)})}_{t-r,z}\sqrt{u_{r,z}(1-u_{r,z})} W(\mathrm dr\mathrm dz). 
\end{align}
\begin{note}
\begin{align}
	&M_s^{(\gamma,n)} - M_0^{(\gamma,n)} 
	\\&= \frac{1}{1-\gamma}\Big(\frac{1}{2}\iint_0^s (v^{(\varnothing,\mu^{(\gamma,n)})}_{t-r,z})^2  u_{r,z} \mathrm dr\mathrm dz + \iint_0^s v^{(\varnothing,\mu^{(\gamma,n)})}_{t-r,z} \sqrt{ u_{r,z}\big(1-u_{r,z}\big)}W(\mathrm dr\mathrm dz)\Big).
\end{align}
	In fact, by Ito's formula, we have almost surely for every $s\in [0,t]$, 
\begin{align}
	&e^{-M_s^{(\gamma,n)}} - e^{-M_0^{(\gamma,n)}}= -\int_0^s e^{-M_r^{(\gamma,n)}} \mathrm dM^{(\gamma,n)}_r + \frac{1}{2}\int_0^s e^{-M_r^{(\gamma,n)}}\mathrm d\langle M^{(\gamma,n)}\rangle_r 
	\\ & = -\frac{1}{2(1-\gamma)}\iint_0^s e^{-M_r^{(\gamma,n)}} (v^{(\varnothing,\mu^{(\gamma,n)})}_{t-r,x})^2 u_{r,z} \mathrm dr\mathrm dz 
	\\& \qquad - \frac{1}{1-\gamma}\iint_0^s e^{-M_r^{(\gamma,n)}}v^{(\varnothing, \mu^{(\gamma,n)})}_{t-r,z} \sqrt{u_{r,z}\big(1-u_{r,z}\big)}W(\mathrm dr\mathrm dz) 
	\\ &\qquad\qquad +\frac{1}{2(1-\gamma)^2}\iint_0^se^{-M_r^{(\gamma,n)}} (v^{(\varnothing,\mu^{(\gamma,n)})}_{t-r,z})^2 u_{r,z}(1-u_{r,z})\mathrm  dr\mathrm dz. 
\end{align}
\end{note}

	  To show \eqref{:IFP_251}, let  us take the expectation on \eqref{:II.42}  while setting $s=\tau \wedge t$.
Notice that
\[
	\mathbf E_{\varepsilon \mathbf 1_U}\Big[\iint_0^{\tau\wedge t} e^{-M_r^{(\gamma,n)}}v^{(\varnothing,\mu^{(\gamma,n)})}_{t-r,z}\sqrt{u_{r,z}(1-u_{r,z})} W(\mathrm dr\mathrm dz)\Big]
	= 0,
\]
	and
\begin{equation}
	\iint_0^{\tau\wedge t} e^{-M^{(\gamma,n)}_r} (v^{(\varnothing,\mu^{(\gamma,n)})}_{t-r,z})^2(u_{r,z}^2-\gamma u_{r,z})  \mathrm dr\mathrm dz
	\leq (1-\gamma) \int_0^{t} \mathrm dr  \int_{F^\mathrm c}  (v^{(\varnothing,\mu^{(\gamma,n)})}_{t-r,z})^2u_{r,z}  \mathrm dz \quad \text{~a.s.~} 
\end{equation}
	The last inequality follows by the observation that $u^2_{r,z} - \gamma u_{r,z} \leq 0$ for $r\leq \tau$ and $z\in F$.
	By the above and noticing that from \eqref{:II.35} that $\mathbf E_{\varepsilon \mathbf 1_U}[u_{r,z}]\leq \varepsilon$ for every $r\geq 0, z\in \mathbb R$,
	we get
\begin{align}
	\mathbb E[e^{-M^{(\gamma,n)}_{\tau \wedge t}}] - e^{-M^{(\gamma,n)}_0} \geq - \frac{\varepsilon}{2(1-\gamma)} \int_0^{t} \mathrm dr  \int_{F^\mathrm c}  (v^{(\varnothing,\mu^{(\gamma,n)})}_{r,z})^2  \mathrm dz.
\end{align}
	Now we have
\begin{align}
	&\mathbb E[e^{-M^{(\gamma,n)}_{t}}] + \mathbb P(\tau \leq t) \geq \mathbb E[e^{-M^{(\gamma,n)}_{\tau\wedge t}};\tau > t] + \mathbb E[e^{-M^{(\gamma,n)}_{\tau\wedge t}};\tau \leq t]
	\\\label{:II.50}&\geq e^{-M^{(\gamma,n)}_0} - \frac{\varepsilon}{2(1-\gamma)} \int_0^{t} \mathrm dr   \int_{F^\mathrm c}  (v^{(\varnothing,\mu^{(\gamma,n)})}_{r,z})^2  \mathrm dz.
\end{align}
	We want to take the limit  in \eqref{:II.50}  as $n\to \infty$.
	Notice that the $\mathcal T$-sequence $((\varnothing, \mu^{(\gamma,n)}))_{n\in \mathbb N}$ is increasing with respect to the partial order $\preceq$.
	Recall the map $\eta$ given by \eqref{:Ieta}.
	Define $\mu^{(\gamma)} := \eta^{(\Lambda, (1-\gamma)\theta(\gamma) \mu)}$.
	From \eqref{:lambda} and \eqref{:mu}, it is straightforward to verify that $(\mu^{(\gamma,n)})_{n\in\mathbb N}$ converges to $\mu^{(\gamma)}$ m-weakly as $n\to \infty$.
	Therefore, from \eqref{:II.522} and \eqref{:II.521}, the increasing sequence $(v^{(\varnothing, \mu^{(\gamma,n)})}_{r,z})_{n\in \mathbb N}$ converges to $v_{r,z}^{(\Lambda, (1-\gamma)\theta(\gamma) \mu)}$ for every $r>0$ and $z\in \mathbb R$.
	From \eqref{:II.522} and the fact that $(1-\gamma)\theta(\gamma) \leq 1$, we have $v_{r,z}^{(\Lambda, (1-\gamma)\theta(\gamma) \mu)}\leq v_{r,z}^{(\Lambda, \mu)}$ for every $r>0$ and $z\in \mathbb R$.
	Taking the limit  in \eqref{:II.50}  as $n\uparrow \infty$, we can now verify from the monotone convergence theorem that
\begin{align}
	&M_t^{(\gamma, n)} 
	\longrightarrow \theta(\gamma)\sum_{i = 1}^\infty u_{t, x_i},
	\\\label{:Later}&M^{(\gamma, n)}_0 
      	\longrightarrow \frac{\ve}{1 - \gamma}\int_U v^{(\Lambda, (1- \gamma)\theta(\gamma)\mu)}_{t, y} \mathrm d y 
      	\leq \frac{\ve}{1 - \gamma}\int_U v^{(\Lambda, \mu)}_{t, y} \mathrm d y,
      	\\&\int_0^{t} \mathrm dr \int_{F^\mathrm c} (v^{(\varnothing,\mu^{(\gamma,n)})}_{r,z})^2  \mathrm dz 
      	\longrightarrow \mathcal V^{(\Lambda, (1-\gamma)\theta(\gamma) \mu,F)}_t 
      	\leq \mathcal V^{(\Lambda, \mu,F)}_t
\end{align}
	which implies the desired \eqref{:IFP_251}. 

	To show \eqref{:IFP_252}, let us take the expectation on \eqref{:II.42} while replacing $s$ by $t$ and the arbitrary $\gamma\in [0,1)$ by $0$.
	This gives us 
\begin{equation}\label{:IE_Star} 
	\mathbb E[e^{-M_t^{(0,n)}}] \leq e^{-M_0^{(0,n)}}.
\end{equation}
			Taking the arbitrary $n\uparrow \infty$, we can get the desired \eqref{:IFP_252} from \eqref{:Later} and \eqref{:IE_Star}.
\end{proof}
\begin{proof}[Proof of Lemma \ref{:IFP_3}]
	Take $\varepsilon < \gamma /2$ and assume that $(u_{t,x})_{t\geq 0, x\in \mathbb R}$ solves the Wright-Fisher SPDE \eqref{:II.3} with initial condition $u_{0,\cdot} = \varepsilon \mathbf 1_U$.
		From \eqref{:II.35}, we have that 
				\begin{equation} \label{:Delta}
		u_{s,y} \leq \ve + N_s(y), \quad s\geq 0,y\in \mathbb R.
	\end{equation}
			where 
\[
	N_s(y) 
	:= \iint_0^s G_{s-r,y-z} \sqrt{u_{r,z}(1-u_{r,z})}W(\mathrm dr\mathrm dz).
\]
	Since $\varepsilon < \gamma/2$, we see from \eqref{:Delta} that $u_{s, y}$ is less than $\gamma$ if $|N_s(y)|$ is less than $\gamma/2$.
	Hence, it suffices to show that there exists a constant   $\C\label{:IFP_33}(U,F,t,\gamma) > 0$,  independent of $\varepsilon \in (0,\gamma/2)$, such that
\begin{equation} \label{:IFP_30}
	\mathbf P_{\varepsilon \mathbf 1_U}\left(\sup_{s \leq t, y \in F}|N_s(y)| > \frac{\gamma}{2}\right) 
	\leq \ve  \Cr{:IFP_33}(U,F,t,\gamma). 
\end{equation}
	Since $F$ is a closed interval, there are four cases:
\begin{enumerate}
\item $F = [a,\infty)$ for some $a\in \mathbb R$; 
\item $F = (-\infty,b]$ for some $b\in \mathbb R$; 
\item $F = \mathbb R$; and 
\item $F=[a,b]$ for some $-\infty < a \leq b< \infty$.
\end{enumerate}

	\emph{Case (1)}: Since $U$ is an interval and $U\cap F$ is bounded, we have $U\subset (-\infty,\beta)$ for some $\beta \in \mathbb R$. 
	Now we can apply \cite{MR1339735}*{Lemma 3.1}, and get that there exists a constant $\C\label{:IFP.32}>0$, independent of $U,F,t,\gamma$ and $\varepsilon$, such that 
\begin{align}
	&\mathbf P_{\varepsilon \mathbf 1_U}\left(\sup_{s \leq t, y \in [a,\infty)}|N_s(y)| > \frac{\gamma}{2}\right) 
	\\& \label{:IFP_31}\leq \Cr{:IFP.32} \Big(\frac{\gamma}{2}\Big)^{-20} (t\vee t^{22})  \int \varepsilon \mathbf 1_U(x) \mathrm dx \int G_{t,x-z}\mathbf 1_{[a,\infty)}(z)\mathrm dz
	\\& \leq \varepsilon \frac{\Cr{:IFP.32}}{\sqrt{2\pi t}} \Big(\frac{\gamma}{2}\Big)^{-20} (t\vee t^{22})  \int_{-\infty}^\beta \mathrm dx \int_a^\infty G_{t,x-z} \mathrm dz<\infty
\end{align}
	which implies the desired result \eqref{:IFP_30} for this case. 

	\textit{Case (2)}: This case is similar to Case (1) due to the spatial symmetry of the SPDE \eqref{:II.3}.  

	\textit{Case (3)}: Since $U\cap F$ is bounded, we have $U = (\alpha, \beta)$ for some $-\infty < \alpha \leq \beta <\infty$.
	In this case, by \cite{MR1339735}*{Lemma 3.1}, \eqref{:IFP_31} still holds for arbitrary $a\in \mathbb R$.
	Taking $a\downarrow -\infty$, we get from monotone convergence theorem that
\begin{equation}
	\mathbf P_{\varepsilon \mathbf 1_U}\left(\sup_{s \leq t, y \in \mathbb R}|N_s(y)| > \frac{\gamma}{2}\right) 
	\leq \Cr{:IFP.32} \varepsilon \Big(\frac{\gamma}{2}\Big)^{-20} (t\vee t^{22})  (\beta - \alpha). 
\end{equation}
	Therefore, the desired result \eqref{:IFP_30} also holds for Case (3). 

	\textit{Case (4)}: In this case $F=[a,b]$ is bounded.
	Let $p>4$.
	From \cite{MR1271224}*{Lemma 6.2}, there exists a constant $\C\label{:D_6}>0$ such that for any $s,s'\geq 0$ and $y,y'\in \mathbb R$, 
\[
	\mathcal K_{s,y;s',y'} 
	:= \iint_0^\infty |G_{s'-r,y'-z}-G_{s-r,y-z}|^2 \mathrm dr\mathrm dz \leq \Cr{:D_6} \big(|y'-y| + \sqrt{|s'-s|}\big)
\]
	where $G:=0$ on $(-\infty,0)\times \mathbb R$ for convention.
	From \eqref{:II.35}, Burkholder-Davis-Gundy inequality \cite{MR4226142}*{Theorem 20.12}, and Jensen's inequality, there exists a constant $\C\label{:D.7}(p)>0$, depending only on $p$, such that for every $s,s'\geq 0$ and $y,y'\in \mathbb R$,
\begin{align}
	&\mathbf E_{\varepsilon \mathbf 1_U}[|N_{s'}(y')-N_{s}(y)|^{2p}] 
	\\&= \mathbf E_{\varepsilon \mathbf 1_U}\left[\left|\iint_0^\infty (G_{s'-r,y'-z}-G_{s-r,y-z})\sqrt{u_{r,z}(1-u_{r,z})}W(\mathrm dr\mathrm dz)\right|^{2p}\right]
	\\&\leq \Cr{:D.7}(p) \mathbf E_{\varepsilon \mathbf 1_U}\left[\left(\iint_0^\infty |G_{s'-r,y'-z}-G_{s-r,y-z}|^2 u_{r,z} \mathrm dr\mathrm dz\right)^{p}\right]
	\\&=  \Cr{:D.7}(p)\mathcal K_{s,y;s',y'}^p\mathbf E_{\varepsilon \mathbf 1_U}\left[\left(\frac{1}{\mathcal K_{s,y;s',y'}} \iint_0^\infty |G_{s'-r,y'-z}-G_{s-r,y-z}|^2 u_{r,z}\mathrm dr\mathrm dz\right)^{p}\right] 
	\\&\leq \Cr{:D.7}(p)\mathcal K_{s,y;s',y'}^p \mathbf E_{\varepsilon \mathbf 1_U}\left[\frac{1}{\mathcal K_{s,y;s',y'}} \iint_0^\infty |G_{s'-r,y'-z}-G_{s-r,y-z}|^2 u_{r,z}^p \mathrm dr\mathrm dz \right] 
	\\&\leq \Cr{:D.7}(p) \Cr{:D_6}^p  \varepsilon \left(|y'-y| + \sqrt{|s'-s|}\right)^p.
\end{align}
	Here, in the last inequality, we used the fact that $\mathbf E_{\varepsilon \mathbf 1_U}[u_{r,z}^p]\leq \mathbf E_{\varepsilon \mathbf 1_U}[u_{r,z}] \leq \varepsilon$ for every $r\geq 0$ and $z\in \mathbb R$.
\begin{note}
	Also notice that 
\[
	|x-y|+\sqrt{|t-s|} \leq (\sqrt{2n}+1)(\sqrt{|x-y|} + \sqrt{|t-s|})\leq 2(\sqrt{2n}+1) \sqrt{|x-y| + |t-s|}
\]
	provided $n>0$, $t,s\geq 0$ and $x,y\in [-n,n]$. 
	To summarize, for every $p\geq 1$ and $n>0$, there exists a constant $\C\label{:D.8}(p,n)>0$, depending only on $p$ and $n$, such that for every $t,s\geq 0$ and $x,y\in [-n,n]$, 
\begin{equation} 
	\mathbb E[|u_{t,x}-u_{s,y}|^{2p}] 
	\leq \Cr{:D.8}(p,n)  \varepsilon (|x-y| + |t-s|)^{p/2}.
\end{equation}
\end{note}
	Now, from Kolmogorov continuity theorem for random fields \cite{MR876085}*{Corollary 1.2}, there exists a constant $\C\label{:D.9}(t,p,F)>0$, depending only on $t$, $p$ and the bounded $F$, such that 
\[
	\mathbf E_{\varepsilon \mathbf 1_U} \left[\sup_{(s',y'),(s,y)\in [0,t]\times F}|N_{s'}(y')-N_{s}(y)|^{2p}\right] 
	\leq \Cr{:D.9}(t,p,F)\varepsilon.
\]
	Finally, for some $y_0\in F$, by Markov's inequality and the fact that $N_0(y_0) = 0$,
\begin{align} 
	&\mathbf P_{\varepsilon \mathbf 1_U}\left(\sup_{(s,y)\in [0,t]\times F} |N_{s}(y)|\geq \frac{\gamma}{2}\right)
	\leq \mathbf P_{\varepsilon \mathbf 1_U}\left(\sup_{(s,y)\in [0,t]\times F} |N_{s}(y)-N_{0}(y_0)|\geq \frac{\gamma}{2}\right)
	\\&\leq \frac{\mathbf E_{\varepsilon \mathbf 1_U} \left[\sup_{(s,y)\in [0,t]\times F} |N_{s}(y)-N_0(y_0)|^{2p}\right]}{(\gamma/2)^{2p}} 
	\leq \frac{\Cr{:D.9}(t,p,F)}{(\gamma/2)^{2p}}\varepsilon.
\end{align}
	The desired result \eqref{:IFP_30} in Case (4) now follows.
\end{proof}
\begin{note}
	\begin{issue}
		We might not need this note depending on Clayton's new proof.
	\end{issue}
\begin{proof}[Proof of Lemma \ref{:BS}]
	For fixed arbitrary $T\geq 0$ and $a\in \mathbb R$,  we notice that
\begin{align}
	&L_{T,a}
	:=\sum_{i\in \mathbb N} \mathbf 1_{\{\exists t\in [0,T]: X_i+\beta^{(i)}_t \in [a,a+1)\}}
	\\&= \sum_{i\in \mathbb N}  \left(\sum_{k\in \mathbb Z}\mathbf 1_{\{a-X_i\in [k-1,k)\}}\right) \mathbf 1_{\{\exists t\in [0,T]: \beta^{(i)}_t \in [a-X_i,a-X_i+1)\}}
	\\&\leq\sum_{k\in \mathbb Z} \sum_{i\in \mathbb N}  \mathbf 1_{\{X_i \in [a-k,a-k+1)\}} \mathbf 1_{\{\exists t\in [0,T]: \beta^{(i)}_t \in [k-1,k+1]\}}.
\end{align}
	Taking expectation, and using the fact that the independent Brownian motions $\{(\beta^{(i)}_t)_{t\geq 0}:i\in \mathbb N\}$ are independent of the random sequence $(X_i)_{i\in \mathbb N}$, we have
\begin{align}
	&\mathbb E[L_{T,a}]
	\leq \sum_{k\in \mathbb Z} \mathbb E\left[\sum_{i\in \mathbb N}\mathbf 1_{\{X_i \in [a-k,a-k+1)\}}\right] \mathbb P\left(\exists t\in [0,T]: \beta^{(1)}_t \in [k-1,k+1]\right)
	\\& \leq \left(\sup_{l\in \mathbb R}\mathbb E\left[\sum_{i\in \mathbb N}\mathbf 1_{\{X_i \in [l,l+1)\}}\right]\right) 2\sum_{k=0}^\infty    \mathbb P\left(\exists t\in [0,T]: \beta^{(1)}_t \in [k-1,k+1] \right)
	\\& \leq \left(\sup_{l\in \mathbb R}\mathbb E\left[\sum_{i\in \mathbb N}\mathbf 1_{\{X_i \in [l,l+1)\}}\right]\right) 2 \sum_{k=0}^\infty   \mathbb P\left(\sup_{t\in [0,T]} \beta^{(1)}_t \geq k-1 \right) < \infty
\end{align}
	which implies the desired result.
\end{proof}
\end{note}

\subsection{Proof of Theorem \ref{thm:Markov}}
As we have mentioned in~Remark~\ref{rem:2311_01}, it is enough to proof Theorem \ref{thm:Markov}(i), since 
Theorem \ref{thm:Markov}(ii) is an immediate corollary of it.
\begin{proof}[Proof of Theorem \ref{thm:Markov} \emph{(i)}]
	From Theorem \ref{:I} \emph{\ref{:IF}}, we already know that $(Z_t)_{t>0}$ is a $\mathcal N$-valued process. 
		Let us now verify that it is a c\`adl\`ag process. 
		Fix an arbitrary $0< \delta < T < \infty$ and $l > 0$.
		From Proposition \ref{prop:finitecrossing}, we have $\mathbb E[Z((\delta, T)\times (-l,l))] < \infty$.
		This implies that almost surely there are only finitely many particle whose trajectory intersects with the time-space region $(\delta, T)\times (-l,l)$.
		Denote by $I_{(\delta, T)\times (-l,l)}$ the labels of those particles, i.e.
	\[
		I_{(\delta, T)\times (-l,l)} := \{i \in I_0: \exists t \in (\delta, T) \text{~s.t.~}  X^{(i)}_t \in (-l,l)\}.
	\]
		Then in other word, the event $\Omega_{\delta, T, l}:=\{ I_{(\delta, T)\times (-l,l)}\text{~has finite cardinality}\}$ has probability $1$.
		Notice that if $t\in (\delta, T)$ and $\varphi$ is a continuous testing function supported on $(-l,l)$, then
	\[
	\langle \varphi,Z_t \rangle = \sum_{i \in I_t} \varphi(X^{(i)}_t) = \sum_{i \in I_{(\delta, T)\times (-l,l)} } \varphi(X^{(i)}_t) 
	\]
	with a convention that $\varphi(\dagger) = 0$.
	Now on the event $\Omega_{\delta, T, l}$, for every continuous function $\varphi$ supported on $(-l,l)$, we have $(\langle \varphi,Z_t\rangle)_{\delta < t< T}$ is a c\`adl\`ag process because it is the sum of finitely many c\`adl\`ag processes.
	From this, it is straightforward to verify that $(Z_t)_{t>0}$ a $\mathcal N$-valued c\`adl\`ag process.
	
		We still needs to verify that \eqref{eq:EntranceLaw}  and \eqref{eq:TransitionProbability} hold for the process $(Z_t)_{t>0}$.
		Let us first consider the case when there are only finitely many initial particles, i.e. $\# I_0 < \infty$.
		Equivalently speaking in terms of the initial trace, $\Lambda = \varnothing$ and $\mu(\mathbb R) < \infty$.
		In this case \eqref{eq:EntranceLaw} holds for the process $(Z_t)_{t>0}$ because it degenerates to Shiga's duality formula \cite{MR948717}*{Theorem 5.2}
\[
		\mathbb E\left[ \prod_{i\in I_t} \left( 1-f(X^{(i)}_t) \right) \right] = \mathbf E_f\left[ \prod_{i\in I_0} (1-u_{t,x_i}) \right].
\]	
	Similarly, \eqref{eq:TransitionProbability} also holds for the process $(Z_t)_{t>0}$ and filtration $(\mathscr F_t)$ by applying Shiga's duality to conditional expectation.

	Let us now verify that \eqref{eq:EntranceLaw} holds for the process $(Z_t)_{t>0}$ when $\# I_0 = \infty$.
	Notice that in this case for every $n\in \mathbb N$,  $\mathbf X^{(n)} := \{X^{(i)}_t: t\geq 0, i = 1,\dots, n\}$ is a coalescing Brownian particle system with initial configuration $(x_i)_{i=1}^n$. 
	Denote by $I_t^{(n)}$ the random set of the index of the particles alive at a given time $t > 0$ in the particle system $\mathbf X^{(n)}$.
	Let $f$ be an arbitrary $[0,1]$-valued Borel measurable function on $\mathbb R$.
	From Shiga's duality \cite{MR948717}*{Theorem 5.2}, 
	\[
	\ex \left[\prod_{i \in I_t^{(n)}}\left(1 - f(  X_t^{(i)}  )\right)\right] 
	= \mathbf E_{f} \left[\prod_{i = 1}^n\left(1 - u_{t, x_i}\right)\right], \quad n\in \mathbb N.
	\]
	Taking $n\uparrow\infty$, we get by monotone convergence theorem that
	\begin{equation} \label{eq:ifduality}
	\mathbb E\left[\prod_{i\in I_t}\left(1-f(X_t^{(i)})\right)\right] 
	= \mathbf E_{f}\left[\prod_{i=1}^\infty (1-u_{t,x_i})\right].
	\end{equation}
	From the fact that $u_{t,x}$ is continuous in $x$, we can verify the following analytical fact
\[
\prod_{i=1}^\infty (1-u_{t,x_i}) = \mathbf 1_{\{u_{t,x} = 0, \forall x\in \Lambda\}} \prod_{i\in \mathbb N: x_i \notin \Lambda}  (1-u_{t,x_i}).
\]
	From this, we can rewrite \eqref{eq:ifduality} into 
		\begin{equation}
		\mathbb E[ \exp\left\{\langle \log (1-f), Z_t \rangle\right\}] = \mathbf E_f[\mathbf 1_{\{u_{t,x} = 1,\forall x\in \Lambda\}} \exp\left\{\langle \log (1-u_t),\mu\rangle \right\}]
	\end{equation}
as desired.

	Similarly, we can verify that \eqref{eq:TransitionProbability} holds for process $(Z_t)_{t>0}$ and filtration $(\mathscr F_t)_{t\geq 0}$.
	In fact, by applying Shiga's duality
		with conditional expectation to the particle system $\mathbf X^{(n)}$, we have
	\[
	\mathbb E\left[\prod_{i\in I^{(n)}_t} \left( 1-f(X_t^{(i)}) \right) \middle| \mathscr F_s\right] 
	= \int \left( \prod_{i\in I^{(n)}_s}\left(1-\omega(X_s^{(i)}) \right) \right) \mathbf P_f(u_{t-s} \in  \mathrm d\omega), \quad n\in \mathbb N.
	\]
	Here, $ \mathbf P_f(u_{t-s} \in  \mathrm d\omega)$ is the law of the random function $u_{t-s}$ under the probability $\mathbf P_f$.
	Taking $n\to \infty$, we get 
	\[
	\mathbb E\left[\prod_{i\in I_t} \left( 1-f(X^{(i)}_t) \right) \middle| \mathscr F_s\right] 
	= \int \left( \prod_{i\in I_s}\left(1-\omega( X_s^{(i)}) \right) \right) \mathbf P_f(u_{t-s} \in  \mathrm d\omega)
	\]
which can be rewritten into 
\begin{equation}
	\mathbb E\left[\exp\{\langle \log(1-f), Z_t\rangle \}\middle| \mathscr F_s \right] 
	= \Theta^f_{t-s} (Z_s).
\end{equation}	
	This also implies the Markov property of $(Z_t)_{t>0}$.
\end{proof}

\section{Proof of Theorem \ref{:E}} \label{:E:}

	From Theorem \ref{thm:Markov}, we know that a coalescing Brownian motions process with initial trace $(\Lambda,\mu)$ can be realized by the process $(Z_t)_{t> 0}$ constructed through \eqref{:.01}--\eqref{:filtration} by taking a list of real numbers $(x_i)_{i\in I_0} \in \mathcal X$ so that $(\Lambda,\mu) = \Psi((x_i)_{i\in I_0})$. 
	Therefore, we only have to prove that the statements \emph{\ref{:StayInfinity}}--\emph{\ref{:EF}} hold for this precise realization $(Z_t)_{t> 0}$.
	First notice that this is trivial if $\# I_0 < \infty$. 
	In fact, if $\# I_0 < \infty$, then $\Lambda = \varnothing$ and $\mu = \sum_{i = 1}^n \delta_{x_i}$ for some finite $n$. 
	In this case, statements \emph{\ref{:StayFinite}}--\emph{\ref{:EI}} hold for the process $(Z_t)_{t>0}$ simply because $Z_t(U) \leq n$ for any $t\geq 0$ and any open interval $U$; and there is no open interval $U$ satisfying the conditions of statements \emph{\ref{:StayInfinity}} and \emph{\ref{:EF}}.
	Therefore, in the rest of this Section, we will assume that $I_0 = \mathbb N$.
	This also allows us to use the lemmas from Subsection \ref{:IF:}.

		Let us first list some lemmas whose proofs are postponed at the end of this section. 
		\begin{lemma} \label{:UF}
	Let $F$ be a closed interval.
	\begin{enumerate}
	\item 
	If $U$ is an open interval such that $U\cap F$ is bounded then 
		\[
			\limsup_{t\downarrow 0} \Cr{:IFP.31}(U,F,\gamma,t) < \infty, \quad \gamma \in (0,1)
		\]
			where $\Cr{:IFP.31}(U,F,\gamma,t)$ is given as in Lemma \ref{:IFP_3}. 
	\item
	If $F$ contains $\cup_{i\in \mathbb N}(x_i-1,x_i+1)$, then
	\[
		\limsup_{t\downarrow 0} \mathcal V_t^{(\Lambda, \mu, F)} < \infty.
	\]
	\end{enumerate}
	\end{lemma}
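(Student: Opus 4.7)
The plan is to revisit the quantitative bounds obtained in the proofs of Lemmas \ref{:IFP_3} and \ref{:IFP_2}, and to observe that each of them stays bounded (in fact tends to zero) as $t\downarrow 0$. No new machinery is needed; it is purely a matter of tracking the $t$-dependence through the existing estimates.

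For part (1), I would go through the same four cases on the shape of $F$ as in the proof of Lemma \ref{:IFP_3}. In Cases (1)--(3) (half-lines and $F=\mathbb R$), the bound derived there contains an explicit factor $(t\vee t^{22})$ multiplied by quantities that depend only on $U$, $F$ and $\gamma$ and that stay bounded as $t\downarrow 0$ (the Gaussian tail integral $\int G_{t,x-z}\mathbf 1_{[a,\infty)}(z)\,\mathrm dz$ is trivially bounded by $1$). Since $(t\vee t^{22})\to 0$, this gives $\Cr{:IFP.31}(U,F,t,\gamma)\to 0$ in these cases. In Case (4), where $F$ is bounded, the bound comes from applying Kolmogorov's continuity theorem to the field $N_s(y)$ on $[0,t]\times F$. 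Here I would make the $t$-dependence explicit by re-chaining on a parabolic box of diameter $\diam(F)+\sqrt{t}$; using that $N_0(y_0)=0$ and the moment estimate $\mathbf E_{\ve\mathbf 1_U}[|N_{s'}(y')-N_s(y)|^{2p}]\leq \Cr{:D.7}(p)\Cr{:D_6}^p\ve(|y'-y|+\sqrt{|s'-s|})^p$ already established in the proof of Lemma \ref{:IFP_3}, one obtains a bound of the form $C(p,\gamma)(\diam(F)+\sqrt{t})^{p-3}$, which converges to the finite limit $C(p,\gamma)\diam(F)^{p-3}$ as $t\downarrow 0$ (for any choice of $p>4$).

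For part (2), I would similarly revisit the case analysis in the proof of Lemma \ref{:IFP_2} according to the shape of the smallest closed interval $\tilde F$ containing $\{x_i:i\in\mathbb N\}$. In Case (1), $\tilde F=\mathbb R$ forces $F=\mathbb R$ and hence $\mathcal V^{(\Lambda,\mu,F)}_t\equiv 0$. In the remaining cases one obtains a pointwise bound of the form $v^{(\Lambda,\mu)}_{r,z}\leq v^{(A,\mathbf 0)}_{r,z-z_0}$ on $F^{\mathrm c}$ with $A\in\{(-\infty,0],[-k,k]\}$ and with $d(z,\tilde F)\geq 1$ for every $z\in F^{\mathrm c}$ (since $F\supset \cup_{i\in\mathbb N}(x_i-1,x_i+1)$). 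Inserting this into the Gaussian-type bounds \eqref{:II.526} and \eqref{:II.54} and substituting $w=d(z,\tilde F)/\sqrt{r}$ produces an integrand in $r$ that is super-exponentially small at $r=0$. Consequently not only is $\int_0^\infty\int_{F^{\mathrm c}}(v^{(\Lambda,\mu)}_{r,z})^2\,\mathrm dz\,\mathrm dr<\infty$, but the partial integral $\mathcal V^{(\Lambda,\mu,F)}_t$ in fact tends to zero as $t\downarrow 0$.

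The main technical obstacle is the tracking of the $t$-dependence of the Kolmogorov continuity constant in Case (4) of part (1): the theorem as stated in \cite{MR876085}*{Corollary 1.2} hides all domain dependence in an unspecified constant, so one has to reopen the chaining argument on the box $[0,t]\times F$ to extract the explicit exponent of $\diam(F)+\sqrt{t}$. Everything else reduces to Gaussian tail estimates on the heat kernel and on $v^{([-k,k],\mathbf 0)}$, which are already available in \eqref{:II.526} and \eqref{:II.54}.
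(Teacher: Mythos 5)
Your proposal is technically feasible but far more laborious than necessary, and it misses the key observation the paper exploits. Both quantities are \emph{monotone non-decreasing in $t$}: the event $\{\sup_{s\leq t,\,y\in F}u_{s,y}>\gamma\}$ grows with $t$, so $\Cr{:IFP.31}(U,F,t,\gamma)$ is non-decreasing in $t$; and $\mathcal V^{(\Lambda,\mu,F)}_t=\int_0^t\int_{F^{\mathrm c}}(v^{(\Lambda,\mu)}_{r,z})^2\,\mathrm dz\,\mathrm dr$ is trivially non-decreasing in $t$. Consequently $\limsup_{t\downarrow 0}$ of each is bounded by its value at any fixed $t>0$, which is finite by Lemma~\ref{:IFP_3} and Lemma~\ref{:IFP_2}~(2) respectively. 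That is the entire proof; no reopening of the chaining argument or tracking of Gaussian-tail constants is needed.

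Your plan would, if carried out carefully, prove the stronger claim that $\Cr{:IFP.31}(U,F,t,\gamma)$ and $\mathcal V_t^{(\Lambda,\mu,F)}$ actually \emph{converge} (in most cases to zero) as $t\downarrow 0$, which is not required here. The part that is genuinely non-routine in your sketch is Case~(4) of part~(1): you correctly identify that one must make the domain dependence of the Kolmogorov continuity constant explicit, but the claimed exponent $(\diam(F)+\sqrt t)^{p-3}$ is asserted without derivation, and getting this right on a parabolic box is exactly the sort of bookkeeping the monotonicity argument lets you avoid. Since the lemma only asks for $\limsup_{t\downarrow 0}<\infty$, you should first check whether the quantity is monotone before reopening any quantitative estimate.
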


\begin{lemma} \label{:EO}
	Let $U$ be an open interval such that  $U \cap \supp(\Lambda,\mu)$ is bounded.
\begin{enumerate}
\item \label{:EO1}
	If $\bar U\cap \Lambda = \varnothing$, then $\limsup_{t\downarrow 0} \int_U v^{(\Lambda, \mu)}_{t,x}\mathrm dx < \infty$. 
\item \label{:EO2}
	If $\bar U\cap \Lambda \neq \varnothing$, then $\lim_{t\downarrow 0} \int_U v^{(\Lambda, \mu)}_{t,x}\mathrm dx = \infty$. 
\end{enumerate}
\end{lemma}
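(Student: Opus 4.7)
The proof naturally splits into the two directions; the key tools are the characterization of $v^{(\Lambda,\mu)}$ through its initial trace in \eqref{:.05} and the monotonicity \eqref{:II.522}.

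I would start with (2), which is the shorter. Fix any $y \in \overline{U} \cap \Lambda$. Since $\{y\} \subset \Lambda$ and $\mathbf 0$ is dominated by $\mu$ on $\Lambda^{\mathrm c}$, one has $(\{y\},\mathbf 0) \preceq (\Lambda,\mu)$, so \eqref{:II.522} yields $v^{(\Lambda,\mu)}_{t,x} \geq v^{(\{y\},\mathbf 0)}_{t,x}$. The translation identity \eqref{:II.524} together with the reflection identity \eqref{:II.525} show that $v^{(\{y\},\mathbf 0)}_{t,\cdot}$ is symmetric about $y$. Because $U$ is an open interval and $y \in \overline{U}$, at least one of the half-intervals $(y-r,y)$ or $(y,y+r)$ is contained in $U$ for every sufficiently small $r > 0$. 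By the singularity clause in the second line of \eqref{:.05} applied with $A = \{y\}$, $\int_{y-r}^{y+r} v^{(\{y\},\mathbf 0)}_{t,x}\,\mathrm dx \to \infty$ as $t\downarrow 0$, and by the symmetry just noted the corresponding half-integral diverges as well. Combined with the pointwise lower bound this gives $\int_U v^{(\Lambda,\mu)}_{t,x}\,\mathrm dx \to \infty$.

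For (1), I would choose $R$ so large that $U \cap \supp(\Lambda,\mu) \subset (-R,R)$ and decompose $U = U_0 \cup (U \setminus U_0)$ with $U_0 := U \cap (-R-1, R+1)$ bounded. The closure $\overline{U_0} \subset \overline{U}$ is then a compact set disjoint from the closed set $\Lambda$, so $d(\overline{U_0},\Lambda) > 0$; I can therefore pick a nonnegative $\phi \in \mathcal C_{\mathrm c}(\Lambda^{\mathrm c})$ with $\phi \equiv 1$ on $\overline{U_0}$. The third line of \eqref{:.05} then gives
\[
\limsup_{t\downarrow 0}\int_{U_0} v^{(\Lambda,\mu)}_{t,x}\,\mathrm dx
\leq \lim_{t\downarrow 0}\int \phi_x\, v^{(\Lambda,\mu)}_{t,x}\,\mathrm dx
= \int \phi_x\,\mu(\mathrm dx) < \infty,
\]
with finiteness because $\mu$ is Radon on $\Lambda^{\mathrm c}$ and $\phi$ has compact support there.

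The remaining piece $U \setminus U_0$ is nonempty only when $U$ is unbounded, and handling it is where the argument gets fussy. In each unbounded direction of $U$ the hypothesis $\overline{U}\cap\Lambda = \varnothing$ forces $\Lambda$ to lie on the opposite side of $U$, while $\supp(\mu)\cap U \subset (-R,R)$. A short case check produces a simple closed set $\tilde A$ that contains $A := \Lambda \cup \supp(\mu)$ and still lies at positive distance from $U \setminus U_0$: a closed half-line $(-\infty,R']$ or $[R',\infty)$ when $U$ is half-infinite, or the bounded interval $[-R,R]$ when $U = \mathbb R$ (in which case $\Lambda = \varnothing$ is automatic). Since $(\Lambda,\mu) \preceq (\tilde A,\mathbf 0)$, \eqref{:II.522} gives $v^{(\Lambda,\mu)}_{t,x} \leq v^{(\tilde A,\mathbf 0)}_{t,x}$, and the explicit Gaussian tail estimates \eqref{:II.526} and \eqref{:II.54} integrate over $U \setminus U_0$ to a quantity that vanishes as $t\downarrow 0$. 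I expect the main obstacle to be precisely this geometric case analysis; once the dominating $\tilde A$ is identified in each configuration, the rest reduces to the test-function limit in \eqref{:.05} and the tail estimates already collected in the paper.
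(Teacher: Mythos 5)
Your proof is correct and follows essentially the same route as the paper's: for part (2) both arguments dominate from below by $v^{(\{y\},\mathbf 0)}$ with $y\in\overline U\cap\Lambda$ and use the symmetry identities \eqref{:II.524}--\eqref{:II.525} to extract a divergent half-integral, and for part (1) both split $U$ into a bounded core handled via the test-function limit in the third line of \eqref{:.05} together with a Gaussian-tail remainder handled via the monotone domination \eqref{:II.522} and the estimates \eqref{:II.526}, \eqref{:II.54}. The only cosmetic difference is that you decompose $U$ into bounded and unbounded pieces up front, whereas the paper enumerates the four shapes of an open interval (with $U=\mathbb R$ excluded by the standing hypothesis $\#I_0=\infty$ rather than dominated by $[-R,R]$); the underlying mechanism is identical.
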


	\begin{lemma}\label{:EE}
	Let $U\subset \mathbb R$ be an open interval such that $U \cap \supp(\Lambda,\mu)$ is bounded.
	Suppose that $\bar U \cap \Lambda \neq \varnothing$. 
	Then as $t\downarrow 0$,
\[
	\left(\int_U v^{(\Lambda, \mu)}_{t, x} \mathrm dx\right)^{-1}\mathbb E[Z_t(U)] \longrightarrow 1.
\]
\end{lemma}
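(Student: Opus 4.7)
The plan is to revisit the argument of Proposition \ref{:IF_fixed_t}, but now track the dependence on $t$ carefully and exploit the divergence of $\int_U v^{(\Lambda,\mu)}_{t,x}\,\mathrm dx$ as $t\downarrow 0$ established by Lemma \ref{:EO}(2). The key observation is that Lemmas \ref{:IFP_1}, \ref{:IFP_25} and \ref{:IFP_3} already sandwich the moment generating function $\mathbb E[(1-\varepsilon)^{Z_t(U)}]$ tightly enough that, after differentiating at $\varepsilon = 0$, the upper and lower first-moment bounds differ only by a factor $1/(1-\gamma)$ and by error terms of order $\mathcal V_t^{(\Lambda,\mu,F)}$ and $\Cr{:IFP.31}(U,F,t,\gamma)$, both of which stay bounded as $t\downarrow 0$ by Lemma \ref{:UF}.

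First I would fix $\gamma \in (0,1)$ and choose a closed interval $F$ containing $\cup_{i\in \mathbb N}(x_i-1,x_i+1)$ such that $U\cap F$ is bounded. Such an $F$ exists under our hypothesis that $U\cap \supp(\Lambda,\mu)$ is bounded: if $U$ is bounded take $F=\mathbb R$; otherwise $U$ is a half-line whose endpoint is on the side containing $U\cap \supp(\Lambda,\mu)$, and one can enlarge the smallest closed interval containing $U\cap \{x_i\}$ by $1$ on the bounded side and extend to infinity on the other side. With this $F$ fixed, exactly as in the derivation of \eqref{:IF_fixed_t1}, I would combine \eqref{:IFP_11} of Lemma \ref{:IFP_1}, \eqref{:IFP_251} of Lemma \ref{:IFP_25} and Lemma \ref{:IFP_3}, and then extract the first moment via
\[
\mathbb E[Z_t(U)] \;=\; \lim_{\varepsilon \to 0^+}\frac{1-\mathbb E[(1-\varepsilon)^{Z_t(U)}]}{\varepsilon},
\]
obtaining the upper bound
\[
\mathbb E[Z_t(U)] \;\leq\; \frac{1}{1-\gamma}\int_U v^{(\Lambda,\mu)}_{t,y}\,\mathrm dy + \frac{1}{2(1-\gamma)}\mathcal V^{(\Lambda,\mu,F)}_t + 2\,\Cr{:IFP.31}(U,F,t,\gamma).
\]
For the matching lower bound, I would use \eqref{:IFP_12} of Lemma \ref{:IFP_1} together with \eqref{:IFP_252} of Lemma \ref{:IFP_25}, which gives $\mathbb E[(1-\varepsilon)^{Z_t(U)}] \leq \exp(-\varepsilon \int_U v^{(\Lambda,\mu)}_{t,y}\,\mathrm dy)$, and differentiating at $\varepsilon = 0$ yields
\[
\mathbb E[Z_t(U)] \;\geq\; \int_U v^{(\Lambda,\mu)}_{t,y}\,\mathrm dy.
\]

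Dividing both bounds by $\int_U v^{(\Lambda,\mu)}_{t,y}\,\mathrm dy$, the lower bound gives that the ratio is always at least $1$, while the upper bound yields
\[
\frac{\mathbb E[Z_t(U)]}{\int_U v^{(\Lambda,\mu)}_{t,y}\,\mathrm dy} \;\leq\; \frac{1}{1-\gamma} + \frac{\mathcal V^{(\Lambda,\mu,F)}_t/2 + 2(1-\gamma)\Cr{:IFP.31}(U,F,t,\gamma)}{(1-\gamma)\int_U v^{(\Lambda,\mu)}_{t,y}\,\mathrm dy}.
\]
By Lemma \ref{:EO}(2) the denominator tends to $\infty$ as $t\downarrow 0$, while Lemma \ref{:UF} ensures that the numerator in the error term stays bounded as $t\downarrow 0$. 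Hence $\limsup_{t\downarrow 0}$ of the ratio is at most $1/(1-\gamma)$, and since $\gamma \in (0,1)$ was arbitrary, letting $\gamma \downarrow 0$ gives $\limsup \leq 1$. Combined with the lower bound, we conclude that the ratio converges to $1$ as desired.

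The only mildly delicate point is the selection of $F$, since both Lemma \ref{:UF}(1) (which needs $U\cap F$ bounded) and Lemma \ref{:UF}(2) (which needs $F\supset \cup(x_i-1,x_i+1)$) must be satisfied simultaneously. The bounded-support hypothesis makes this construction straightforward, as sketched above. The rest is a quantitative reading of the proof of Proposition \ref{:IF_fixed_t}, with the divergence of $\int_U v^{(\Lambda,\mu)}_{t,y}\,\mathrm dy$ absorbing all finite error terms in the limit.
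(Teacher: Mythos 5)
Your proposal is correct and follows essentially the same route as the paper: upper bound from Proposition \ref{:IF_fixed_t} with error terms controlled uniformly in small $t$ by Lemma \ref{:UF}, lower bound from \eqref{:IFP_12} and \eqref{:IFP_252}, and the divergence of $\int_U v^{(\Lambda,\mu)}_{t,y}\,\mathrm dy$ from Lemma \ref{:EO}(2) absorbing the error terms. The only cosmetic difference is your case analysis in choosing $F$; the paper simply takes $F$ to be the smallest closed interval containing $\cup_{i}(x_i-1,x_i+1)$ and observes that $U\cap F$ is automatically bounded under the hypothesis, which is a bit cleaner.
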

\begin{lemma}\label{:EP}
	Let $U\subset \mathbb R$ be an open interval such that $U \cap \supp(\Lambda,\mu)$ is bounded.
	Suppose that $\bar U \cap \Lambda \neq \varnothing$. 
	Then as $t\downarrow 0$,
\[
	\left(\int_U v^{(\Lambda, \mu)}_{t, x} \mathrm dx\right)^{-1}Z_t(U) \longrightarrow 1 \quad \text{in probability}.
\]
\end{lemma}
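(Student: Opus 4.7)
The plan is to prove convergence in probability by showing convergence of Laplace transforms to that of the constant $1$. Since the limit is deterministic, such convergence is equivalent to convergence in probability. Concretely, write $a_t := \int_U v^{(\Lambda,\mu)}_{t,x}\,\mathrm dx$. By Lemma~\ref{:EO}\,\ref{:EO2} we have $a_t \to \infty$ as $t \downarrow 0$. For an arbitrary $\lambda > 0$, set $\varepsilon_t := 1 - e^{-\lambda/a_t}$, so that $(1-\varepsilon_t)^{Z_t(U)} = e^{-\lambda Z_t(U)/a_t}$, $\varepsilon_t \downarrow 0$, and $\varepsilon_t a_t \to \lambda$. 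The goal reduces to showing
\[
\lim_{t\downarrow 0}\mathbb E\bigl[e^{-\lambda Z_t(U)/a_t}\bigr] = e^{-\lambda}.
\]

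For the upper bound, combine \eqref{:IFP_12} of Lemma~\ref{:IFP_1} with \eqref{:IFP_252} of Lemma~\ref{:IFP_25} to obtain
\[
\mathbb E\bigl[e^{-\lambda Z_t(U)/a_t}\bigr] \leq \exp\bigl(-\varepsilon_t a_t\bigr),
\]
whose right-hand side converges to $e^{-\lambda}$. For the lower bound, fix $\gamma \in (0,1)$ and take $F$ to be the smallest closed interval containing $\bigcup_{i\in\mathbb N}(x_i-1,x_i+1)$; since $U\cap \supp(\Lambda,\mu)$ is bounded and $\supp(\Lambda,\mu) = \mathrm{cl}(\{x_i\})$, the set $U\cap F$ is bounded (compare Remark~\ref{thm:Finitem}). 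Applying \eqref{:IFP_11} of Lemma~\ref{:IFP_1} together with \eqref{:IFP_251} of Lemma~\ref{:IFP_25}, and then Lemma~\ref{:IFP_3}, yields for $t$ small enough that $\varepsilon_t < \gamma/2$:
\[
\mathbb E\bigl[e^{-\lambda Z_t(U)/a_t}\bigr] \geq \exp\!\left(-\tfrac{\varepsilon_t a_t}{1-\gamma}\right) - \tfrac{\varepsilon_t}{2(1-\gamma)}\mathcal V_t^{(\Lambda,\mu,F)} - 2\varepsilon_t\,\Cr{:IFP.31}(U,F,t,\gamma).
\]
By Lemma~\ref{:UF}, both $\mathcal V_t^{(\Lambda,\mu,F)}$ and $\Cr{:IFP.31}(U,F,t,\gamma)$ remain bounded as $t\downarrow 0$, so the last two terms vanish. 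Hence
\[
\liminf_{t\downarrow 0}\mathbb E\bigl[e^{-\lambda Z_t(U)/a_t}\bigr] \geq \exp\!\left(-\tfrac{\lambda}{1-\gamma}\right).
\]
Since $\gamma \in (0,1)$ was arbitrary, letting $\gamma \downarrow 0$ gives the matching lower bound $e^{-\lambda}$.

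Combining the two bounds yields $\mathbb E[e^{-\lambda Z_t(U)/a_t}] \to e^{-\lambda}$ for every $\lambda > 0$. Since this is the Laplace transform of the Dirac mass at $1$, the nonnegative random variables $Z_t(U)/a_t$ converge in distribution to the constant $1$, which is equivalent to convergence in probability, as desired.

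The main obstacle is the lower-bound side: the quantity $\mathbf P_{\varepsilon_t\mathbf 1_U}(\sup_{s\le t,\,y\in F} u_{s,y} > \gamma)$ must be small despite the initial condition $\varepsilon_t\mathbf 1_U$ having increasingly delicate size, and the $\mathcal V_t^{(\Lambda,\mu,F)}$ term must not blow up; both are handled via the small-time estimates in Lemma~\ref{:UF}, and the remaining factor $1/(1-\gamma)$ in the exponent is removed only after sending $\gamma\downarrow 0$, which requires performing the $t \downarrow 0$ limit first for each fixed $\gamma$.
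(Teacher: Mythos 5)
Your proof is correct and follows essentially the same route as the paper: choose $\varepsilon_t = 1 - e^{-\lambda/a_t}$ so that $(1-\varepsilon_t)^{Z_t(U)} = e^{-\lambda Z_t(U)/a_t}$, then bound the Laplace transform above and below via Lemmas~\ref{:IFP_1}, \ref{:IFP_25}, \ref{:IFP_3} and \ref{:UF}, taking $t\downarrow 0$ first and then $\gamma\downarrow 0$. The paper's argument is the same apart from notation ($\vartheta$ in place of $\lambda$), and it makes explicit the step that $\varepsilon_t\to 0$ follows from Lemma~\ref{:EO}~(2) before invoking Lemma~\ref{:IFP_3}.
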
 

\begin{proof}[Proof of Theorem \ref{:E}]
	As mentioned at the beginning of this section, we only have to show that the statements \emph{\ref{:StayInfinity}}--\emph{\ref{:EF}} hold for the process $(Z_t)_{t>0}$ with the assumption that $\#I_0 = \infty$. 
	For statements \emph{\ref{:StayInfinity}} and \emph{\ref{:StayFinite}}, this is done in Theorem \ref{:I}.
	For statements \emph{\ref{:EK}}, this is done in Proposition \ref{:IF_fixed_t} and Remark \ref{thm:Finitem}.
	
	Let us now verify that statement \emph{\ref{:EI}} holds for the process $(Z_t)_{t>0}$.
	Let $\gamma \in (0,1)$ be arbitrary.
	Let $F$ be the smallest closed interval containing $\cup_{i\in \mathbb N} (x_i-1,x_i+1)$.
	From the condition that $U \cap \supp(\Lambda,\mu)$ is bounded, we immediately get that $U \cap F$ is bounded. 
	From Lemma \ref{:UF} we know that both $\limsup_{t\downarrow 0} \mathcal V_t^{(\Lambda, \mu, F)}$ and $\limsup_{t\downarrow 0}\Cr{:IFP.31}(U, F, t, \gamma)$ are finite. 
	With this at hand, the desired result follows from Proposition \ref{:IF_fixed_t} and Lemma \ref{:EO} \eqref{:EO1}.  
	
	Finally, from Lemmas \ref{:EO} \eqref{:EO2}, \ref{:EE}, \ref{:EP} and \cite{MR4226142}*{Theorem 5.12}, we can verify that the statement \emph{\ref{:EF}} holds for the process $(Z_t)_{t>0}$.
\end{proof}

	Let us now give the proofs of Lemmas \ref{:UF}--\ref{:EP}.
	\begin{proof}[Proof of Lemma \ref{:UF}]
		From the definition of $(\mathcal V_t^{(\Lambda, \mu, F)})_{t>0}$ and $(\Cr{:IFP.31}(U, F, t, \gamma))_{t>0}$ (see \eqref{:IFP_15} and Lemma \ref{:IFP_3} respectively),
	we know that they are non-decreasing in $t>0$.
		Now the desired results follow from Lemma \ref{:IFP_3} and Lemma \ref{:IFP_2} (2). 
	\end{proof}
\begin{proof}[Proof of Lemma \ref{:EO} \eqref{:EO1}]
	Since $U$ is an open interval, there are four cases to consider.

	\emph{Case (1), $U = \mathbb R$}: 
	This case won't happen, because from the condition of the lemma we have $\supp(\Lambda, \mu)$ is bounded and $\Lambda = \varnothing$. 
	Now $\mu$ is a locally finite measure on $\mathbb R$ with compact support, and in particular $\#I_0 = \mu(\mathbb R) < \infty$.
	This contradicts the assumption we made at the beginning of this section.
	
	\emph{Case (2), $U = (\alpha,\beta)$ for $-\infty < \alpha \leq \beta < \infty$}:
	From the condition that $\bar U \cap \Lambda = \varnothing$, we  get that the closed interval $\bar U = [\alpha, \beta]$ is contained in the open set $\Lambda^c$.
	Therefore, there exists a small $\delta>0$ such that $[\alpha-\delta, \beta+\delta]$ is also contained in $\Lambda^c$.
	It can be verified that 
 \[Z_0([\alpha-\delta,\beta+\delta]) = \#(\{x_i:i\in \mathbb N\}\cap [\alpha-\delta,\beta+\delta])<\infty, \] 
 since if it is infinite then there exists a limiting accumulation point of  $\{x_i:i\in \mathbb N\}$ in $\Lambda^c$, which contradicts \eqref{:lambda}.
	Let us take a continuous $[0,1]$-valued function $\phi$ supported on $[\alpha-\delta,\beta+\delta]$ such that $\phi = 1$ on $[\alpha,\beta]$.
	Now as $t\downarrow 0$, from \eqref{:.05} we have
\[
	\int_\alpha^\beta v_{t,x}^{(\Lambda,\mu)}\mathrm dx 
	\leq \int_{\Lambda^c} v^{(\Lambda,\mu)}_{t,x} \phi(x)\mathrm dx 
	\longrightarrow \sum_{i\in \mathbb N} \phi(x_i)
	\leq Z_0([\alpha-\delta,\beta+\delta]) < \infty
\]
	as desired for this case.
	
	\emph{Case (3), $U = (\alpha,\infty)$ for some $\alpha \in \mathbb R$}:
	From the condition that $U \cap \supp(\Lambda,\mu)$ is bounded, one can get that there exists $\beta \in U$ big enough, such that $(\beta-1,\infty) \cap \supp(\Lambda,\mu) = \varnothing$.
	It is then clear that $(\Lambda, \mu)\preceq ((-\infty, \beta-1],\mathbf 0)$.
	Therefore from \eqref{:II.522}, \eqref{:II.524} and \eqref{:II.526}, we have
\begin{align}
	&\int_{\beta}^\infty v_{t,x}^{(\Lambda, \mu)}\mathrm dx
	\leq \int_{\beta}^\infty v_{t,x}^{((-\infty,\beta-1], \mathbf 0)}\mathrm dx
	= \int_{1}^\infty v_{t,x}^{((-\infty,0], \mathbf 0)}\mathrm dx
	\\&\leq \frac{\Cr{:II.53}}{\sqrt{t}}\int_{t^{-1/2}}^\infty \left(1+z\right)e^{-\frac{1}{2}z^2} \mathrm dz
	\leq \frac{\Cr{:II.53}}{\sqrt{t}}\int_{t^{-1/2}}^\infty \left(\frac{z}{t^{-1/2}}+z\right)e^{-\frac{1}{2}z^2} \mathrm dz
	\\\label{:EPOO1}&= \Cr{:II.53}\left(1+\frac{1}{\sqrt{t}}\right)e^{-\frac{1}{2t}}
	\longrightarrow 0,
	\qquad t\downarrow 0.
\end{align}
	Also note that, by taking $\tilde U := (\alpha,\beta)$, we have $\tilde U \cap \supp(\Lambda,\mu)$ is bounded and $\tilde U \cap \Lambda = \varnothing$.
	So from what we proved in Case 2, we know that $\limsup_{t\downarrow 0} \int_\alpha^\beta v_{t,x}^{(\Lambda,\mu)}\mathrm dx < \infty$.
	Combining this with \eqref{:EPOO1}, we get the desired result for this case.

	\emph{Case (4), $U = (-\infty,\beta)$ for some $\beta \in \mathbb R$}: 
	This is the same as Case 3, thanks to the spatial symmetry.
\end{proof}
\begin{proof}[Proof of Lemma \ref{:EO} \eqref{:EO2}]
	Since $\overline U \cap \Lambda \neq \varnothing$, there exists $\tilde x\in \overline U\cap \Lambda$.
	By shifting $(\Lambda, \mu)$ and $U$ together, we can assume without loss of generality that $\tilde x = 0$.
	Now since $U$ is an open interval whose closure contains $0$, we know that there exists $\delta > 0$, such that either $(-\delta,0)\subset U$ or $(0,\delta)\subset U$ holds.
	Define set $-U:=\{-y\in \mathbb R: y \in U \}$, it is then clear that $(-\delta,0)\cup (0,\delta) \subset U \cup (-U)$.  
	Now from \eqref{:.05}, \eqref{:II.522}, \eqref{:II.525} and the fact that $(\{0\},\mathbf 0) \preceq (\Lambda, \mu)$, we have as $t\downarrow 0$,
\begin{align}
	&\int_U v^{(\Lambda,\mu)}_{t,x} \mathrm dx \geq \int_U v^{(\{0\},\mathbf 0)}_{t,x} \mathrm dx
	= \frac{1}{2} \Big(\int_U v^{(\{0\},\mathbf 0)}_{t,x} \mathrm dx + \int_{-U} v^{(\{0\},\mathbf 0)}_{t,x} \mathrm dx\Big)
	\\&\geq \frac{1}{2} \int_{U\cup (-U)} v^{(\{0\},\mathbf 0)}_{t,x} \mathrm dx
	\geq \frac{1}{2} \int_{-\delta}^\delta v^{(\{0\},\mathbf 0)}_{t,x} \mathrm dx
	\longrightarrow +\infty
\end{align}
	as desired.
\end{proof}

\begin{proof}[Proof of Lemma \ref{:EE}]
	Let $F$ be the smallest closed interval containing $\cup_{i\in \mathbb N} (x_i-1,x_i+1)$.
	Let $0<\gamma < 1$ be arbitrary.
	From the condition that  $U \cap \supp(\Lambda, \mu)$ is bounded, we know that $U\cap F$ is also bounded.
	From Proposition \ref{:IF_fixed_t} we know \eqref{:IF_fixed_t1} holds for any time $t>0$. 
	From Lemma \ref{:UF}, we know that both $\limsup_{t\downarrow 0} \mathcal V_t^{(\Lambda, \mu, F)}$ and $\limsup_{t\downarrow 0}\Cr{:IFP.31}(U, F, t, \gamma)$ are finite. 
	These and Lemma \ref{:EO} \eqref{:EO2} easily imply that
\begin{align}
	&\limsup_{t\downarrow 0}\Big(\int_Uv^{(\Lambda, \mu)}_{t, y} \mathrm d y\Big)^{-1}\ex[Z_t(U)] 
	\leq \frac{1}{1-\gamma}.
\end{align}
	Since $\gamma \in (0,1)$ is arbitrary, we can replace the right hand side of the above inequality by $1$. 
	On the other hand, by Lemmas \ref{:IFP_1} (see \eqref{:IFP_12}) and \ref{:IFP_25} (see \eqref{:IFP_252}) 
			for any $t>0$ we have 
\begin{align}
	&\ex[Z_t(U)] 
	= \lim_{\ve \downarrow 0}\frac{1 - \ex\left((1- \ve)^{Z_t(U)}\right)}{\ve}
	\\&\geq \lim_{\ve \downarrow 0} \ve^{-1} \left( 1 - \exp\left(-\varepsilon \int_U v^{(\Lambda,\mu)}_{t,y} \mathrm dy\right)\right)
	=  \int_U v^{(\Lambda,\mu)}_{t,y} \mathrm dy.
\end{align}
	This implies that
\[
	\liminf_{t\downarrow 0} \Big(\int_Uv^{(\Lambda, \mu)}_{t, y} \mathrm d y\Big)^{-1}\ex[Z_t(U)] 
	\geq 1.
\]
	Thus the desired result follows.
\end{proof}

\begin{proof}[Proof of Lemma \ref{:EP}]
	Let $\vartheta > 0$ be arbitrary.
	Define	
\[
	\varepsilon(U,\vartheta,t)
	:= 1 - \exp\left(- \left(\int_U v^{(\Lambda,\mu)}_{t,x}\mathrm dx\right)^{-1}\vartheta\right), 
	\quad t>0.
\]
	From Lemma \ref{:EO} \eqref{:EO2}, it is easy to see that
\[
	\varepsilon(U,\vartheta,t)\int_U v_{t,y}^{(\Lambda,\mu)}\mathrm dy \longrightarrow \vartheta, \quad \text{as} \quad t\downarrow 0.
\]
	Also notice that
\begin{equation}\label{:Finally}
	\mathbb E\left[\exp\left(-\vartheta \left(\int_U v^{(\Lambda,\mu)}_{t,x}\mathrm dx\right)^{-1} Z_t(U)\right)\right]
	= \mathbb E\left[\left(1-\varepsilon(U,\vartheta,t)\right)^{Z_t(U)}\right],
	\quad t>0.
\end{equation}
By Lemmas \ref{:IFP_1} (see \eqref{:IFP_12}) and \ref{:IFP_25} (see \eqref{:IFP_252}), we have 
\begin{align}
	\mathbb E\left[\left(1-\varepsilon(U,\vartheta,t)\right)^{Z_t(U)}\right]
	\leq \exp\left(-\varepsilon(U,\vartheta,t)\int_U v^{(\Lambda,\mu)}_{t,y}\mathrm dy\right),
	\quad t>0,
\end{align}
	which implies that
\begin{equation} \label{:Last}
	\limsup_{t\downarrow 0}\mathbb E\left[\left(1-\varepsilon(U,\vartheta,t)\right)^{Z_t(U)}\right]
	\leq e^{-\vartheta}.
\end{equation}
	Let us now take $F$ to be the smallest closed interval containing  $\cup_{i\in \mathbb N}(x_i-1,x_i+1)$ and take an arbitrary $\gamma \in (0,1)$. 
		From Lemma \ref{:EO} \eqref{:EO2}, we have
\begin{equation}\label{:LV}
	\varepsilon (U,\vartheta,t) \to 0, \quad \text{~as~} t\downarrow 0.
\end{equation}
	and therefore, there exists $t_0(U,\vartheta,\gamma)$ such that $2\varepsilon(U,\vartheta,t)< \gamma$ for every $0<t\leq t_0(U,\vartheta,\gamma)$.
	Now by Lemmas \ref{:IFP_1} (see \eqref{:IFP_11}) and \ref{:IFP_25} (see \eqref{:IFP_251}), for every $0<t\leq t_0(U,\vartheta,\gamma)$, we have

\begin{align}
	&\mathbb E\left[\left(1-\varepsilon(U,\vartheta,t)\right)^{Z_t(U)}\right]
	\\&\geq \exp\left(-\frac{\varepsilon(U,\vartheta,t)}{1-\gamma}\int_U v^{(\Lambda,\mu)}_{t,y}\mathrm dy\right)  - \frac{\varepsilon(U,\vartheta,t)}{2(1-\gamma)} \mathcal V_t^{(\Lambda,\mu,F)} - 2\mathbf P_{\varepsilon(U,\vartheta,t)\mathbf 1_U}\left(\sup_{s\leq t,y\in F} u_{s,y}>\gamma\right).
\end{align}
	Noticing that, by \eqref{:LV} and Lemma \ref{:UF}, 
\[
	\frac{\varepsilon(U,\vartheta,t)}{2(1-\gamma)} \mathcal V_t^{(\Lambda,\mu,F)} \longrightarrow 0,
	\quad \text{~as~} t\downarrow 0,
\]
	and that by Lemma \ref{:IFP_3} 
\[
	\mathbf P_{\varepsilon(U,\vartheta,t)\mathbf 1_U}\left(\sup_{s\leq t,y\in F} u_{s,y}>\gamma\right)
	\leq \varepsilon(U,\vartheta,t)\Cr{:IFP.31}(U, F, t, \gamma)
	\longrightarrow 0,
	\quad \text{~as~} t\downarrow 0,
\]
we can verify 
\[
	\liminf_{t\downarrow 0}\mathbb E\left[\left(1-\varepsilon(U,\vartheta,t)\right)^{Z_t(U)}\right]
	\geq e^{-\frac{\vartheta}{1-\gamma}}.
\]
	Since $\gamma\in (0,1)$ is arbitrary, we can replace the right hand side of the above inequliaty by $e^{-\vartheta}$.  
	From this, \eqref{:Finally} and \eqref{:Last}, we have
\[
	\lim_{t\downarrow 0}\mathbb E\left[\exp\left(-\vartheta \left(\int_U v^{(\Lambda,\mu)}_{t,x}\mathrm dx\right)^{-1} Z_t(U)\right)\right]
	= e^{-\vartheta}.
\]
Since $\vartheta>0$ is arbitrary, we are done.
\end{proof}

\section{Behavior of the rate function} \label{:B:}
	
	For the proof of Proposition \ref{:B}, we will use Le Gall's probabilistic representation \cite{MR1429263}*{Theorem 4} for the solutions of the PDE \eqref{:.05}.
	In the lemma below, we give a weak version of \cite{MR1429263}*{Theorem 4} avoiding the technical details of the Brownian snake. 
	We include its proof later in this section for the sake of completeness. 
	Denote by $\mathbb K$ the collection of compact subsets of $\mathbb R$ equipped with the Hausdorff metric.
	Denote by $\mathcal K$ the identity map on $\mathbb K$.
	Notice that $\mathcal K$ is a $\mathbb K$-valued random element on the measurable space $(\mathbb K,\mathscr B(\mathbb K))$.
	Define $A - \tilde A := \{a-\tilde a: a\in A, \tilde a\in \tilde A\}$ for any subsets $A$ and $\tilde A$ of $\mathbb R$.
\begin{lemma}\label{:BL}
	There exists a unique probability measure $\mathsf P$ on $\mathbb K$ such that 
\begin{equation} 
	\mathsf P(\mathcal K\cap A \neq \varnothing) = v_{2,0}^{(A,\mathbf 0)}, \quad A \in \mathbb K.
\end{equation}
	Furthermore, the following statements hold. 
\begin{enumerate}
\item
	$v^{(A,\mathbf 0)}_{t,x} 
	= 2t^{-1}\mathsf P\left(x\in \left(A - \sqrt{t/2}  \mathcal K\right)\right)$ for every $t>0$, $x\in \mathbb R$ and compact $A \subset \mathbb R$.
\item
	Extending the probability space $(\mathbb K, \mathscr B(\mathbb K), \mathsf P)$ if necessary, there exist real valued random variables $Y,\tilde Y$, and strictly positive random variables $R,\tilde R$, such that $\mathsf E [\tilde R] < \infty$ and that $\mathsf P\big( (Y-R,Y+R) \subset -\sqrt{1/2}\mathcal K \subset (\tilde Y-\tilde R,\tilde Y+\tilde R) \big) = 1.$
\end{enumerate}
\end{lemma}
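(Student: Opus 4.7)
The plan is to construct $\mathsf P$ via Le Gall's probabilistic representation \cite{MR1429263}*{Theorem 4} and derive parts (1) and (2) from the defining identity, using the scaling, translation and symmetry properties of the PDE \eqref{:.05}.

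For the existence of $\mathsf P$ and the defining identity, I would appeal to \cite{MR1429263}*{Theorem 4}, which, via the Brownian snake (equivalently, the support of one-dimensional super-Brownian motion), produces a random compact subset of $\mathbb R$ whose hitting probabilities on compact sets are given by $v^{(\cdot,\mathbf 0)}_{2,0}$. Note that $v^{(A,\mathbf 0)}_{2,0}\leq v^{(\mathbb R,\mathbf 0)}_{2,0}=1$ by \eqref{:II.522} and \eqref{:II.523}, so the right hand side is a legitimate probability. Uniqueness of $\mathsf P$ then follows from the Choquet--Matheron theorem: the law of a random closed subset of $\mathbb R$ is determined by its hitting functional on compact sets.

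Part (1) would follow by combining the translation identity \eqref{:II.524} with the natural scaling $w(s,y):=c^2 v(c^2 s,cy)$ of \eqref{:.05}, which sends initial traces $(A,\mathbf 0)\mapsto(c^{-1}A,\mathbf 0)$. Choosing $c=\sqrt{t/2}$ and evaluating at $(s,y)=(2,0)$ gives $v^{(A,\mathbf 0)}_{t,x}=v^{(A-x,\mathbf 0)}_{t,0}=(2/t)\,v^{(\sqrt{2/t}(A-x),\mathbf 0)}_{2,0}=(2/t)\,\mathsf P\bigl(\mathcal K\cap\sqrt{2/t}(A-x)\neq\varnothing\bigr)$, and the event in the last probability rearranges to $\{x\in A-\sqrt{t/2}\,\mathcal K\}$.

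For part (2), the outer inclusion is obtained by taking $\tilde Y:=0$ and $\tilde R:=1+\sup\{|y|:y\in-\sqrt{1/2}\,\mathcal K\}$, which is $\mathsf P$-a.s.\ finite since $\mathcal K$ is a.s.\ compact. For the integrability $\mathsf E[\tilde R]<\infty$, I would use monotone convergence along the exhaustion $[R,R+n]\uparrow[R,\infty)$ together with \eqref{:II.522}, \eqref{:II.524}, \eqref{:II.525} and the Gaussian tail bound \eqref{:II.526} to obtain $\mathsf P\bigl(\mathcal K\cap[R,\infty)\neq\varnothing\bigr)\leq C(1+R)e^{-R^2/4}$; the same estimate for the negative tail follows from the distributional symmetry that $\mathcal K$ and $-\mathcal K$ have the same law, a consequence of \eqref{:II.525} applied to the defining identity and the uniqueness of $\mathsf P$. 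For the inner inclusion, I would rely on the known structural fact that the range of the one-dimensional Brownian snake is $\mathsf P$-a.s.\ a non-degenerate compact interval, so that, writing $-\sqrt{1/2}\,\mathcal K=[\alpha,\beta]$ with $\alpha<\beta$ a.s., one may take $Y:=(\alpha+\beta)/2$ and $R:=(\beta-\alpha)/2>0$. The main obstacle here is precisely this structural fact: showing that $\mathcal K$ a.s.\ has non-empty interior is significantly more delicate than the scaling argument for (1) or the tail estimate for the outer inclusion, and will require invoking the detailed theory of the 1D Brownian snake from \cite{MR1429263}.
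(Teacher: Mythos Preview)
Your approach to existence, uniqueness, and Part (1) matches the paper's exactly: the paper also builds $\mathsf P$ as the law of the level-$2$ section $\mathscr S_2$ of the Brownian snake under the normalized excursion measure $\tilde{\mathbb N}_0=2\mathbf 1_{\{\mathscr S_2\neq\varnothing\}}\,\mathrm d\mathbb N_0$, appeals to Choquet--Matheron (cited as \cite{MR2132405}*{Theorem 1.13}) for uniqueness, and derives (1) from the scaling $v^{(A,\mathbf 0)}_{t,x}=\alpha^2 v^{(\alpha A,\mathbf 0)}_{\alpha^2 t,\alpha x}$ together with \eqref{:II.524}. For the outer inclusion in Part (2) you take $\tilde Y=0$ while the paper centers slightly differently, but the tail bound $\tilde{\mathbb N}_0(\sup\mathscr S_2\geq n)=v^{((-\infty,0],\mathbf 0)}_{2,n}$ controlled by \eqref{:II.526} is the same and both versions work.

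The real divergence is the inner inclusion in Part (2). You propose to invoke that $-\sqrt{1/2}\,\mathcal K$ is a.s.\ a non-degenerate interval and then take $Y,R$ as its midpoint and half-length; you correctly flag this as the delicate step but leave it to the literature. The paper avoids this stronger structural claim and instead gives a short self-contained argument that $\mathcal K$ merely has non-empty interior. It uses the second part of \cite{MR1429263}*{Theorem 4}: the continuous super-Brownian density $(\tilde u_{2,y})_{y\in\mathbb R}$ is supported on $\mathscr S_2$, and taking $\theta\uparrow\infty$ in
\[
v^{(\varnothing,\theta\nu)}_{2,0}=2\mathbb N_0\Big[1-\exp\Big(-\tfrac{\theta}{2}\int\tilde u_{2,y}\,\nu(dy)\Big)\Big]
\]
for some $\nu$ with full support, together with \eqref{:II.521}, yields $\tilde{\mathbb N}_0(\exists y:\tilde u_{2,y}>0)=1$. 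Continuity then gives a rational open interval on which $\tilde u_{2,\cdot}>0$, hence contained in $\supp\tilde u_{2,\cdot}\subset\mathscr S_2$, and one takes $(Y-R,Y+R)$ to be $-\sqrt{1/2}$ times that interval. This is weaker than what you assert but exactly what the lemma needs. Note also a terminological slip: $\mathcal K$ here is the time-$2$ section $\mathscr S_2$, not the full range of the snake, so results about the range do not apply directly.
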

	Here $\mathsf E$ is the corresponding expectation of the probability $\mathsf P$.
\begin{proof}[Proof of Proposition \ref{:B}]
	According to Lemma \ref{:BL} (1) and Fubini's theorem, we have
\begin{equation} \label{:B.05}
	\|v^{(A,\mathbf 0)}_{t,\cdot}\|_{L^1(\mathbb R)} 
	= 2t^{-1} \mathsf E\left[\lambda\left(A - \sqrt{t/2} \mathcal K\right)\right], \quad t>0. 
\end{equation}
	Let random variable $Y, \tilde Y, R$ and $\tilde R$ be given as in Lemma \ref{:BL} (2).

	(1) Suppose that $A$ has positive finite cardinality. 
	Notice that in this case $\lambda(A - \sqrt{t/2}\mathcal K)/\sqrt{t}$ converges to $\#A\cdot\lambda(\sqrt{1/2}\mathcal K)$ almost surely as $t\downarrow 0$. 
	Also notice that the family of random variables $\{\lambda (A - \sqrt{t/2}\mathcal K)/\sqrt{t}:t>0\}$ is dominated by the integrable random variable $2\# A\cdot \tilde R$. 
	Therefore, from the dominated convergence theorem
\[
	\sqrt{t}\|v^{(A,\mathbf 0)}_{t,\cdot}\|_{L^1(\mathbb R)} 
	= 2 \mathsf E[\lambda(A - \sqrt{t/2} \mathcal K)/\sqrt{t}] \xrightarrow[]{} 2\mathsf E[\lambda(\sqrt{1/2}\mathcal K)]\cdot \# A, \quad t\downarrow 0. 
\]
	From \eqref{:B.05} we have that $\Cr{:.1}=\|v_{1,\cdot}^{(\{0\}, \mathbf 0)}\|_{L^1(\mathbb R)}= 2 \mathsf E[\lambda(\sqrt{1/2}\mathcal K)]$. 
	The desired result in this case now follows.

	(2) Suppose that $A$ has positive finite Lebesgue measure.
	Notice that in this case
\[
	t\|v^{(A,0)}_{t,\cdot}\|_{L^1(\mathbb R)} 
	= 2 \mathsf E[\lambda(A - \sqrt{t/2} \mathcal K)] \geq 2\lambda(A),
	\quad t> 0.
\]
	On the other hand, since $A$ is compact, we can verify from the monotone convergence theorem that 
\[
	t\|v^{(A,\mathbf 0)}_{t,\cdot}\|_{L^1(\mathbb R)}
	= 2 \mathsf E[\lambda(A - \sqrt{t/2} \mathcal K)] \leq 2 \mathsf E[\lambda(A + \sqrt{t} \tilde R B^o)] \to 2\lambda(A), 
	\quad t\downarrow 0, 
\]
	where $B^o:=(-1,1)$ is the centered open unit ball in $\mathbb R$.
	So we have $t\|v^{(A,0)}_{t,\cdot}\|_{L^1(\mathbb R)}\to 2\lambda(A)$ as $t\downarrow 0$.

	(3) Suppose that $A$ has Minkowski dimension $\delta \in (0,1)$. 
	Define $\C\label{:B.15}(A):=\inf\{n\geq 0: A \subset [-n,n]\}$.
	Let $\overline \delta \in (\delta,1)$ be arbitrary.
	Noticing that 
\begin{equation} \label{:B.16}
	\frac{\lambda(A + rB^o)}{r^{1-\overline \delta}} 
	= \exp\bigg((\log r) \Big(\frac{\log\lambda(A + rB^o)}{\log r}-(1-\overline \delta)\Big)\bigg)
	\xrightarrow[]{} 0, \quad r\downarrow 0, 
\end{equation}
	so there exists $\C\label{:B.2}(A, \overline \delta)>0$ such that $\lambda(A + rB^o)/r^{1-\overline \delta} \leq 1$ for every $r\in (0,\Cr{:B.2}(A,\overline \delta))$.
	Also notice that,  
\[
	\frac{\lambda(A + rB^o)}{r^{1-\overline \delta}} 
	\leq \frac{2(\Cr{:B.15}(A)+r)}{r^{1-\overline\delta}}
	\leq \frac{2\Cr{:B.15}(A)}{\Cr{:B.2}(A, \overline \delta)^{1-\overline\delta}} + 2r^{\overline \delta}, \quad r\geq \Cr{:B.2}(A,\overline \delta).
\]
	Therefore, there exists $\C\label{:B.3}(A,\overline \delta)>0$ such that $ \lambda(A + rB^o)/r^{1-\overline \delta} \leq \Cr{:B.3}(A,\overline \delta) + 2r^{\overline \delta}$ for every $r>0$.
	Now, we can verify that the family of random variables $\{\lambda(A + \sqrt{t}\tilde R B^o)/t^{(1-\overline\delta)/2}:t\in (0,1]\}$ is dominated by the integrable random variable $\Cr{:B.3}(A,\overline \delta)\tilde R^{1-\overline \delta}+2\tilde R$. 
	From \eqref{:B.05}, Proposition \ref{:BL} (2), \eqref{:B.16} and the dominated convergence theorem, we have
\[
	t^{\frac{1+\overline\delta}{2}}\|v_{t,\cdot}^{(A, \mathbf 0)}\|_{L^1(\mathbb R)}
	= 2\mathsf E\Big[\frac{\lambda(A-\sqrt{t/2}\mathcal K)}{(\sqrt{t})^{1-\overline \delta}}\Big] 
	\leq 2\mathsf E\Big[\frac{\lambda(A+\sqrt{t}\tilde RB^o)}{(\sqrt{t}\tilde R)^{1-\overline\delta}}\tilde R^{1-\overline\delta}\Big]\to 0, \quad t\downarrow 0. 
\]
	From this, and the fact that $\overline \delta \in (\delta,1)$ is arbitrary, we can verify that
\[
	\limsup_{t\downarrow 0} \frac{\log \|v_{t,\cdot}^{(A, \mathbf 0)}\|_{L^1(\mathbb R)}}{\log (1/t)} \leq \frac{1+\delta}{2}.
\]
	On the other hand, let $\underline \delta \in (0,\delta)$ be arbitrary.
	Notice that 
\begin{equation} 
	\frac{\lambda(A + rB^o)}{r^{1-\underline \delta}} 
	= \exp\bigg((\log r) \Big(\frac{\log\lambda(A + rB^o)}{\log r}-(1-\underline \delta)\Big)\bigg)
	\xrightarrow[]{} \infty, \quad r\downarrow 0. 
\end{equation}
	So from \eqref{:B.05}, Lemma \ref{:BL} (2) and Fatou's lemma, 
\begin{align}
	&\liminf_{t\downarrow 0}t^{\frac{1+\underline\delta}{2}}\|v_{t,\cdot}^{(A, \mathbf 0)}\|_{L^1(\mathbb R)}
	= \liminf_{t\downarrow 0}2\mathsf E\Big[\frac{\lambda(A-\sqrt{t/2}\mathcal K)}{(\sqrt{t})^{1-\underline \delta}}\Big] 
      	\\&\geq 2\mathsf E\Big[\liminf_{t\downarrow 0}\frac{\lambda(A+\sqrt{t} RB^o)}{(\sqrt{t} R)^{1-\underline\delta}} R^{1-\underline\delta}\Big]=\infty. 
\end{align}
	From this, and the fact that $\underline \delta \in (0,\delta)$ is arbitrary, we can verify that
\[
	\liminf_{t\downarrow 0} \frac{\log \|v_{t,\cdot}^{(A, \mathbf 0)}\|_{L^1(\mathbb R)}}{\log (1/t)} 
	\geq \frac{1+\delta}{2}.
\]
	The desired result in this case now follows.
\end{proof}
\begin{proof}[Proof of Lemma \ref{:BL}]
	Denote by $\mathbb W$ the space of pairs $(\zeta,w)$ where $\zeta\in [0,\infty)$ and $w=(w_t)_{t\geq 0}$ is an $\mathbb R$-valued continuous path such that $w_t = w_{t\wedge \zeta}$ for every $t\geq 0$.
	The space $\mathbb W$ is equipped with the metric
\[
	d_{\mathbb W}\big((\zeta,w),(\zeta',w')\big)
	= |\zeta-\zeta'|+\sup_{t\geq 0}|w_{t}-w'_{t}|, \quad (\zeta,w),(\zeta,w')\in \mathbb W.
\]
	Denote by $\mathcal C([0,\infty), \mathbb W)$ the space of $\mathbb W$-valued continuous path $(\zeta_s,(w_{s,t})_{t\geq 0})_{s\geq 0}$ equipped with the topology of local uniform convergence. 
	According to \cite{MR1429263}*{Theorem 4}, there exists a $\sigma$-finite measure $\mathbb N_0$ on $\mathcal C([0,\infty),\mathbb W)$, known as the excursion measure of the Brownian snake initiated at position $0$, such that $ v^{(A, \mathbf 0)}_{r,0} = 2\mathbb N_0(\mathscr S_r\cap A\neq \varnothing) $ for every closed $A\subset \mathbb R$ and $r>0$.
	Here, for each $r>0$, $\mathscr S_r:(\zeta_s,(w_{s,t})_{t\geq 0})_{s\geq 0}\mapsto \{w_{s,r}: s\geq 0, \zeta_s \geq r \}$ is  a measurable map from $\mathcal C([0,\infty), \mathbb W)$ to $\mathbb K$.
\begin{note}
	Notice that under $\mathbb N_0$, the continuous process $(\zeta_s)_{s\geq 0}$ has the ``law" of Ito's excursion measure of Brownian motion. 
	Therefore there exists $\overline s>0$ such that $\zeta_s = 0$ for $s\geq \overline s$.
	The graph $\mathscr G$ of the Brownian snake is defined as the image of the compact set $\{(s,t):s\in [0,\overline s],t\in[0,\zeta_s]\}$ under the continuous map $(s,t)\mapsto (t,w_{s,t})$.
	Therefore, $\mathscr G$ is compact.
	Notice that for every $r> 0$, $\mathscr S_r$ is isometric to the compact set $\mathscr G\cap \{(r,x):x\in \mathbb R\}$; so $\mathscr S_r$ must be compact.
\end{note}
	From \eqref{:II.523}, we have $2\mathbb N_0(\mathscr S_2 \neq \varnothing) = v^{(\mathbb R,\mathbf 0)}_{2,0}= 1$.
	This allows us to define a probability measure $\tilde{\mathbb N}_0$ on $\mathcal C([0,\infty), \mathbb W)$ so that $\mathrm d \tilde{\mathbb N}_0 = 2\mathbf 1_{\{\mathscr S_2 \neq \varnothing\}} \mathrm d\mathbb N_0$.
	Now we have 
\begin{equation}\label{:B.43}
	v^{(A, \mathbf 0)}_{2,0} 
	= \tilde{\mathbb N}_0(\mathscr S_2\cap A \neq \varnothing)
\end{equation}
	for every closed $A \subset \mathbb R$.
	This gives us the existence of the desired probability $\mathsf P$, which is the law of $\mathscr S_2$ under $\tilde {\mathbb N}_0$.
	The uniqueness of $\mathsf P$ follows from \cite{MR2132405}*{Theorem 1.13}. 

	(1) One can verify directly from the uniqueness of the solution to the PDE \eqref{:.05} that $v^{(A,\mathbf 0)}_{t,x}= \alpha^2 v^{(\alpha A,\mathbf 0)}_{\alpha^2t,\alpha x}$ for every $\alpha >0$, $t>0$, $x\in \mathbb R$ and closed $A\subset \mathbb R$. 
\begin{note}
	In fact, define $\tilde v_{t,x} =\alpha^2 v_{\alpha^2t,\alpha x}=\alpha^2 v^{(\alpha A,\mathbf 0)}_{\alpha^2t,\alpha x}$.
	Then
\[
	\partial_x \tilde v_{t,x}
	= \alpha^3\partial_y v_{s,y}|_{s=\alpha^2t,y=\alpha x},
	\quad \partial_x^2 \tilde v_{t,x}
	= \alpha^4 \partial_y^2 v_{s,y}|_{s=\alpha^2t,\alpha x},
\]
	and
\begin{align}
	&\partial_t \tilde v_{t,x} 
	= \alpha^4 \partial_s v_{s,y}|_{s=\alpha^2 t,\alpha x}
	= \alpha^4 (\frac{1}{2}\partial_y^2 v_{s,y}-\frac{1}{2}v_{s,y}^2)|_{s=\alpha^2 t,\alpha x}
	= \frac{1}{2}\partial_x^2 \tilde v_{t,x}-\frac{1}{2}\tilde v_{t,x}^2.
\end{align}
	Also for any $y\in A $,
\[
	\int_{y-r}^{y+r}\tilde v_{t,x}\mathrm dx = 
	\alpha^2 \int_{y-r}^{y+r} v_{\alpha^2t,\alpha x}\mathrm dx = 
	\alpha \int_{\alpha y-\alpha r}^{\alpha y+\alpha r} v_{\alpha^2t,z}\mathrm dz \to \infty, 
	\quad t\downarrow 0. 
\]
\end{note}
	Now, we can verify for every $t>0$, $x\in \mathbb R$ and closed $A \subset \mathbb R$ that
\begin{align}
	&v^{(A, \mathbf 0)}_{t,x} = v^{(A-\{x\}, \mathbf 0)}_{t,0} =2t^{-1} v^{(\sqrt{2/t}(A-\{x\}), \mathbf 0)}_{2,0}
	\\&= 2t^{-1}\mathsf P\Big(\mathcal K\cap \big(\sqrt{2/t}(A-\{x\})\big) \neq \varnothing\Big)
	=2t^{-1}\mathsf P\big(x\in (A - \sqrt{t/2}\mathcal K)\big).
\end{align}
\begin{note}
	In fact, $\mathcal K\cap \big(\sqrt{2/t}(A-\{x\})\big) \neq \varnothing$ iff $(\sqrt{t/2}\mathcal K)\cap (A - \{x\})\neq \varnothing$ iff \{$\exists k \in \sqrt{t/2}\mathcal K, l \in A$ such that $k=l-x$\} iff $x\in A- \sqrt{t/2}\mathcal K$.
\end{note}

	(2) According to \cite{MR1429263}*{Theorem 4}, there exists a non-negative continuous random field $(\tilde u_{t,x})_{t>0,x\in \mathbb R}$, known as the density of the super-Brownian motion constructed from the Brownian snake,  such that 
\[
	v^{(\varnothing,\theta\nu)}_{2,0}
	= 2\mathbb N_0\Big[ 1- \exp\Big(-\frac{\theta}{2}\int \tilde u_{2,y}\nu(dy)\Big)\Big] 
\]
	for every $\theta > 0$ and non-negative finite Radon measure $\nu$ on $\mathbb R$.
	Furthermore, from how it is constructed, one can verify that $\mathbb N_0$-almost everywhere, $(\tilde u_{2,y})_{y\in \mathbb R}$ is supported on $\mathscr S_2$. 
	Fixing a non-negative finite Radon measure $\nu$ with $\supp(\nu)=\mathbb R$, taking $\theta \uparrow \infty$, we get from above and \eqref{:II.521} that
\begin{align}
	&1=v^{(\mathbb R,\mathbf 0)}_{2,0} = 2\mathbb N_0\Big( \int \tilde u_{2,y}\nu(dy) > 0\Big) = 2\mathbb N_0(\exists y \in \mathbb R,  \tilde u_{2,y}>0)
	\\& = \tilde {\mathbb N}_0(\exists y\in \mathbb R, \tilde u_{2,y}>0)
	= \tilde {\mathbb N}_0\Big(\bigcup_{n\in \mathbb N} \{\tilde u_{2,y}>0,\forall y\in (q_n,q_n')\}\Big)
\end{align}
	where $\{(q_n,q_n'):n\in \mathbb N\}$ is a sequential arrangement of $\{(q,q')\in\mathbb Q^2:q<q'\}$.
	This allows us to define a random variable $N:=\inf\{n\in \mathbb N: \tilde u_{2,y}>0,\forall y\in (q_n,q_n')\}$ which is finite almost surely under $\tilde{\mathbb N}_0$.
	We then define an $\mathbb R$-valued random variable $Y$ and a $(0,\infty)$-valued random variable $R$ so that $(Y-R,Y+R)=-\sqrt{1/2}(q_N,q_N')$.
	Now $\tilde {\mathbb N}_0$-almost surely, $(Y-R,Y+R) \subset -\sqrt{1/2}\supp(\tilde u_{2,\cdot})\subset -\sqrt{1/2}\mathscr S_{2}$.

	On the other hand, from \eqref{:B.43}, \eqref{:II.525}, \eqref{:II.524} and \eqref{:II.526}, we can verify that
\begin{align}
	&\tilde{\mathbb N}_0(0\vee \sup \mathscr S_2 \geq n)
	= \tilde {\mathbb N}_0(\mathscr S_2 \cap [n,\infty) \neq \varnothing) 
	\\&= v^{([n,\infty),\mathbf 0)}_{2,0}
	= v_{2,0}^{((-\infty,-n],\mathbf 0)} 
	= v_{2,n}^{((-\infty,0],\mathbf 0)} 
	\leq \frac{1}{2}\Cr{:II.53}\Big(1+\frac{n}{\sqrt{2}}\Big)e^{-\frac{n^2}{4}}, 
	\quad n>0.
\end{align} 
	So we have $\tilde {\mathbb N}_0[|0\vee\sup \mathscr S_2|]<\infty$.
	Similarly, $\tilde {\mathbb N}_0[|0\wedge \inf \mathscr S_2|]<-\infty$.
\begin{note}
	For a non-negative random variable $X$, we have
\begin{align}
	&E[X] \leq P(X\in [0,1))+2P(X\in[1,2)) + 3P(X\in [2,3))+\cdots 
	\\&= P(X\geq 0) + P(X\geq 1) + P(X\geq 2)+\cdots 
\end{align} 
\end{note}
	We can then define a real valued random variable $\tilde Y$ and a $(0,\infty)$-valued random variable $\tilde R$ so that the interval $(\tilde Y-\tilde R,\tilde Y+\tilde R)=-\sqrt{1/2}(0\wedge \inf \mathscr S_2 - 1, 0\vee \sup \mathscr S_2 + 1)$.
	Notice that $\tilde {\mathbb N}_0$-almost surely, $ -\sqrt{1/2}\mathscr S_{2}\subset (\tilde Y-\tilde R,\tilde Y+\tilde R)$.
	 We are done. 
\end{proof}

\begin{bibdiv}
	\begin{biblist}
		\bib{MR1673235}{article}{
			author={Aldous, D. J.},
			title={Deterministic and stochastic models for coalescence (aggregation
				and coagulation): a review of the mean-field theory for probabilists},
			journal={Bernoulli},
			volume={5},
			date={1999},
			number={1},
			pages={3--48},
			issn={1350-7265},
			review={\MR{1673235}},
		}
\bib{MR2892958}{article}{
	author={Angel, O.},
	author={Berestycki, N.},
	author={Limic, V.},
	title={Global divergence of spatial coalescents},
	journal={Probab. Theory Related Fields},
	volume={152},
	date={2012},
	number={3-4},
	pages={625--679},
	issn={0178-8051},
	review={\MR{2892958}},
}
	\bib{MR1813840}{article}{
		author={Athreya, S.},
		author={Tribe, R.},
		title={Uniqueness for a class of one-dimensional stochastic PDEs using
			moment duality},
		journal={Ann. Probab.},
		volume={28},
		date={2000},
		number={4},
		pages={1711--1734},
		issn={0091-1798},
		review={\MR{1813840}},
	}
\bib{barnes2021effect}{article}{
	author={Barnes, C.}, 
	author={Mytnik, L.}, 
	author={Sun, Z.},
	title={Effect of small noise on the speed of reaction-diffusion equations with non-Lipschitz drift},
	eprint={https://doi.org/10.48550/arXiv.2107.09377},
	date={2021},
}
\bib{MR2599198}{article}{
	author={Berestycki, J.},
	author={Berestycki, N.},
	author={Limic, V.},
	title={The $\Lambda$-coalescent speed of coming down from infinity},
	journal={Ann. Probab.},
	volume={38},
	date={2010},
	number={1},
	pages={207--233},
	issn={0091-1798},
	review={\MR{2599198}},
}
	\bib{MR2574323}{book}{
	author={Berestycki, N.},
	title={Recent progress in coalescent theory},
	series={Ensaios Matem\'{a}ticos [Mathematical Surveys]},
	volume={16},
	publisher={Sociedade Brasileira de Matem\'{a}tica, Rio de Janeiro},
	date={2009},
	pages={193},
	isbn={978-85-85818-40-1},
	review={\MR{2574323}},
}
\bib{MR4235476}{article}{
	author={Biswas, N.},
	author={Etheridge, A.},
	author={Klimek, A.},
	title={The spatial Lambda-Fleming-Viot process with fluctuating
		selection},
	journal={Electron. J. Probab.},
	volume={26},
	date={2021},
	pages={Paper No. 25, 51},
	review={\MR{4235476}},
}
\bib{blath2022stochastic}{article}{
	author={Blath, J.}, 
	author={Hammer, M.}, 
	author={Nie, F.},
	title={The stochastic Fisher-KPP equation with seed bank and on/off branching coalescing Brownian motion},
	journal={Stochastics and Partial Differential Equations: Analysis and Computations},
	date={2022},
	pages={1--46},
}
\bib{MR2014157}{article}{
	author={Doering, C. R.},
	author={Mueller, C.},
	author={Smereka, P.},
	title={Interacting particles, the stochastic
		Fisher-Kolmogorov-Petrovsky-Piscounov equation, and duality},
	note={Stochastic systems: from randomness to complexity (Erice, 2002)},
	journal={Phys. A},
	volume={325},
	date={2003},
	number={1-2},
	pages={243--259},
	issn={0378-4371},
	review={\MR{2014157}},
}
\bib{MR3582808}{article}{
	author={Durrett, R.},
	author={Fan, W.-T.},
	title={Genealogies in expanding populations},
	journal={Ann. Appl. Probab.},
	volume={26},
	date={2016},
	number={6},
	pages={3456--3490},
	issn={1050-5164},
	review={\MR{3582808}},
}
\bib{MR1415234}{article}{
	author={Evans, S. N.},
	author={Fleischmann, K.},
	title={Cluster formation in a stepping-stone model with continuous,
		hierarchically structured sites},
	journal={Ann. Probab.},
	volume={24},
	date={1996},
	number={4},
	pages={1926--1952},
	issn={0091-1798},
	review={\MR{1415234}},
}
\bib{MR4278798}{article}{
	author={Fan, W.-T. L.},
	title={Stochastic PDEs on graphs as scaling limits of discrete
		interacting systems},
	journal={Bernoulli},
	volume={27},
	date={2021},
	number={3},
	pages={1899--1941},
	issn={1350-7265},
	review={\MR{4278798}},
}
\bib{MR3968719}{article}{
	author={Forien, R.},
	title={The stepping stone model in a random environment and the effect of
		local heterogneities on isolation by distance patterns},
	journal={Electron. J. Probab.},
	volume={24},
	date={2019},
	pages={Paper No. 57, 35},
	review={\MR{3968719}},
}
\bib{MR3846839}{article}{
	author={Hammer, M.},
	author={Ortgiese, M.},
	author={V\"{o}llering, F.},
	title={A new look at duality for the symbiotic branching model},
	journal={Ann. Probab.},
	volume={46},
	date={2018},
	number={5},
	pages={2800--2862},
	issn={0091-1798},
	review={\MR{3846839}},
}
\bib{MR2162813}{article}{
	author={Hobson, T.},
	author={Tribe, R.},
	title={On the duality between coalescing Brownian particles and the heat
		equation driven by Fisher-Wright noise},
	journal={Electron. Comm. Probab.},
	volume={10},
	date={2005},
	pages={136--145},
	issn={1083-589X},
	review={\MR{2162813}},
}
	\bib{MR4029158}{article}{
		author={Hughes, T.},
		author={Perkins, E.},
		title={On the boundary of the zero set of super-Brownian motion and its
			local time},
		language={English, with English and French summaries},
		journal={Ann. Inst. Henri Poincar\'{e} Probab. Stat.},
		volume={55},
		date={2019},
		number={4},
		pages={2395--2422},
		issn={0246-0203},
		review={\MR{4029158}},
	}
\bib{MR863723}{article}{
	author={Iwata, K.},
	title={An infinite-dimensional stochastic differential equation with
		state space $C({\bf R})$},
	journal={Probab. Theory Related Fields},
	volume={74},
	date={1987},
	number={1},
	pages={141--159},
	issn={0178-8051},
	review={\MR{863723}},
}
\bib{MR4226142}{book}{
	author={Kallenberg, O.},
	title={Foundations of modern probability},
	series={Probability Theory and Stochastic Modelling},
	volume={99},
	note={Third edition [of  1464694]},
	publisher={Springer, Cham},
	date={[2021] \copyright 2021},
	pages={xii+946},
	isbn={978-3-030-61871-1},
	isbn={978-3-030-61870-4},
	review={\MR{4226142}},
}
\bib{MR958288}{article}{
	author={Konno, N.},
	author={Shiga, T.},
	title={Stochastic partial differential equations for some measure-valued
		diffusions},
	journal={Probab. Theory Related Fields},
	volume={79},
	date={1988},
	number={2},
	pages={201--225},
	issn={0178-8051},
	review={\MR{958288}},
}
\bib{MR2977849}{book}{
	author={Lapidus, M. L.},
	author={van Frankenhuijsen, M.},
	title={Fractal geometry, complex dimensions and zeta functions},
	series={Springer Monographs in Mathematics},
	edition={2},
	note={Geometry and spectra of fractal strings},
	publisher={Springer, New York},
	date={2013},
	pages={xxvi+567},
	isbn={978-1-4614-2175-7},
	isbn={978-1-4614-2176-4},
	review={\MR{2977849}},
}
\bib{MR1207305}{article}{
	author={Le Gall, J.-F.},
	title={A class of path-valued Markov processes and its applications to
		superprocesses},
	journal={Probab. Theory Related Fields},
	volume={95},
	date={1993},
	number={1},
	pages={25--46},
	issn={0178-8051},
	review={\MR{1207305}},
}
\bib{MR1429263}{article}{
	author={Le Gall, J.-F.},
	title={A probabilistic approach to the trace at the boundary for
		solutions of a semilinear parabolic partial differential equation},
	journal={J. Appl. Math. Stochastic Anal.},
	volume={9},
	date={1996},
	number={4},
	pages={399--414},
	issn={1048-9533},
	review={\MR{1429263}},
}
\bib{MR2223040}{article}{
	author={Limic, V.},
	author={Sturm, A.},
	title={The spatial $\Lambda$-coalescent},
	journal={Electron. J. Probab.},
	volume={11},
	date={2006},
	pages={no. 15, 363--393},
	issn={1083-6489},
	review={\MR{2223040}},
	doi={10.1214/EJP.v11-319},
}
\bib{MR1697494}{article}{
	author={Marcus, M.},
	author={V\'{e}ron, L.},
	title={Initial trace of positive solutions of some nonlinear parabolic
		equations},
	journal={Comm. Partial Differential Equations},
	volume={24},
	date={1999},
	number={7-8},
	pages={1445--1499},
	issn={0360-5302},
	review={\MR{1697494}},
}
\bib{MR2132405}{book}{
	author={Molchanov, I.},
	title={Theory of random sets},
	series={Probability and its Applications (New York)},
	publisher={Springer-Verlag London, Ltd., London},
	date={2005},
	pages={xvi+488},
	isbn={978-185223-892-3},
	isbn={1-85233-892-X},
	review={\MR{2132405}},
}
\bib{MR2793860}{article}{
	author={Mueller, C.},
	author={Mytnik, L.},
	author={Quastel, J.},
	title={Effect of noise on front propagation in reaction-diffusion
		equations of KPP type},
	journal={Invent. Math.},
	volume={184},
	date={2011},
	number={2},
	pages={405--453},
	issn={0020-9910},
	review={\MR{2793860}},
}
\bib{MR983088}{article}{
	author={Reimers, M.},
	title={One-dimensional stochastic partial differential equations and the
		branching measure diffusion},
	journal={Probab. Theory Related Fields},
	volume={81},
	date={1989},
	number={3},
	pages={319--340},
	issn={0178-8051},
	review={\MR{983088}},
}
\bib{MR1271224}{article}{
	author={Shiga, T.},
	title={Two contrasting properties of solutions for one-dimensional
		stochastic partial differential equations},
	journal={Canad. J. Math.},
	volume={46},
	date={1994},
	number={2},
	pages={415--437},
	issn={0008-414X},
	review={\MR{1271224}},
}
\bib{MR948717}{article}{
	author={Shiga, T.},
	title={Stepping stone models in population genetics and population
		dynamics},
	conference={
		title={Stochastic processes in physics and engineering},
		address={Bielefeld},
		date={1986},
	},
	book={
		series={Math. Appl.},
		volume={42},
		publisher={Reidel, Dordrecht},
	},
	date={1988},
	pages={345--355},
	review={\MR{948717}},
}
\bib{MR1339735}{article}{
	author={Tribe, R.},
	title={Large time behavior of interface solutions to the heat equation
		with Fisher-Wright white noise},
	journal={Probab. Theory Related Fields},
	volume={102},
	date={1995},
	number={3},
	pages={289--311},
	issn={0178-8051},
	review={\MR{1339735}},
}
\bib{MR876085}{article}{
	author={Walsh, J. B.},
	title={An introduction to stochastic partial differential equations},
	conference={
		title={\'{E}cole d'\'{e}t\'{e} de probabilit\'{e}s de Saint-Flour, XIV---1984},
	},
	book={
		series={Lecture Notes in Math.},
		volume={1180},
		publisher={Springer, Berlin},
	},
	date={1986},
	pages={265--439},
	review={\MR{876085}},
}
	\end{biblist}
\end{bibdiv}
\end{document}